\newtheorem{theorem}{Theorem}[section]
\newtheorem{proposition}[theorem]{Proposition}
\newtheorem{lemma}[theorem]{Lemma}
\newtheorem{corollary}[theorem]{Corollary}
\theoremstyle{definition}
\newtheorem{example}[theorem]{Example}
\newtheorem{definition}[theorem]{Definition}
\definecolor{darkblue}{rgb}{0.0,0.0,0.4}
\newcommand{\R}{\mathbb{R}}
\newcommand{\vol}{\mathrm{vol}}
\newcommand{\diam}{\mathrm{diam}}
\newcommand{\sgn}{\mathrm{sign}}
\newcommand{\hodge}{\star}
\newcommand{\dual}{\ast}
\title[Convergence of discrete exterior calculus]{Convergence of discrete exterior calculus approximations for Poisson problems}
\author{Erick Schulz}
\address{Department of Mathematics and Statistics.
McGill University,
Montreal, Quebec H3A 0B9, Canada}
\email{erick.schulz@mail.mcgill.ca}
\author{Gantumur Tsogtgerel}
\address{Department of Mathematics and Statistics.
McGill University,
Montreal, Quebec H3A 0B9, Canada}
\email{gantumur@math.mcgill.ca}
\urladdr{http://www.math.mcgill.ca/gantumur/}
\date{\today}                                           % Activate to display a given date or no date
	\definecolor{ffqqqq}{rgb}{1.,0.,0.}
	\definecolor{qqqqff}{rgb}{0.,0.,1.}
\begin{document}

%%%%%%%%%%%%%%%%%%%%%%%%%%%%%%%%%%%%%%%%%%%%%%%%%%%%%%%%%%%%%%%%%%%%%%%%%%%%%%
\begin{abstract}
Discrete exterior calculus (DEC) is a framework for constructing discrete versions of exterior differential calculus objects, and is widely used in
computer graphics, computational topology,
and discretizations of the Hodge-Laplace operator and other related partial differential equations.
However, a rigorous convergence analysis of DEC has always been lacking;
as far as we are aware,
the only convergence proof of DEC so far appeared is for the scalar Poisson problem in two dimensions,
and it is based on reinterpreting the discretization as a finite element method.
%However, convergence issues for those problems are wide open; as far as we are aware, there has been no proof of convergence except for the Poisson equation in two dimensions, which is immediate because the discrete problem is identical to the one that arises from affine finite elements.
Moreover, even in two dimensions, there have been some puzzling numerical experiments reported in the literature, apparently suggesting that there is convergence without consistency.

%In this paper, we develop an inherently finite difference type theory that is able to partly explain the puzzling experiments, and establish convergence results beyond the Poisson problem in two dimensions.
In this paper, we develop a general independent framework for analyzing issues such as convergence of DEC without relying on theories of other discretization methods,
and demonstrate its usefulness by establishing convergence results for DEC beyond the Poisson problem in two dimensions.
Namely, we prove that DEC solutions to the scalar Poisson problem in arbitrary dimensions converge pointwise to the exact solution at least linearly with respect to the mesh size.
We illustrate the findings by various numerical experiments, which show that the convergence is in fact of second order when the solution is sufficiently regular.
The problems of explaining the second order convergence, and of proving convergence for general $p$-forms remain open.
\end{abstract}

%\begin{keywords} %\end{keywords}

%\begin{AMS} \end{AMS}

%%%%%%%%%%%%%%%%%%%%%%%%%%%%%%%%%%%%%%%%%%%%%%%%%%%%%%%%%%%%%%%%%%%%%%%%%%%%%%
\maketitle

%\tableofcontents

%%%%%%%%%%%%%%%%%%%%%%%%%%%%%%%%%%%%%%%%
\section{Introduction}
\label{s:intro}
%%%%%%%%%%%%%%%%%%%%%%%%%%%%%%%%%%%%%%%%

The main objective of this paper is to establish convergence of discrete exterior calculus
approximations on unstructured triangulations for the scalar Poisson problem in general dimensions.
There are several approaches to extending the exterior calculus to discrete spaces;
What we mean by {\em discrete exterior calculus} (DEC) in this paper is the approach put forward by Anil Hirani and his collaborators, cf. \parencite{Hirani03,Desbrun2005}.
Since its conception, DEC has found many applications and has been extended in various directions, including
general relativity \parencite{Frau06},
electrodynamics \parencite{Stern2007},
linear elasticity \parencite{Yavari08},
computational modeling \parencite{Desbrun2008},
port-Hamiltonian systems \parencite{SSS2012}, digital geometry processing \parencite{Crane2013},
Darcy flow \parencite{HNC15}, and the Navier-Stokes equations \parencite{MHS16}.
However, a rigorous convergence analysis of DEC has always been lacking;
As far as we are aware,
the only convergence proof of DEC so far appeared is for the scalar Poisson problem in two dimensions,
and it is based on reinterpreting the discretization as a finite element method, cf. \parencite{HPW06}.
In the current paper, we develop a general independent framework for analyzing issues such as convergence of DEC without relying on theories established for other discretization methods,
and demonstrate its usefulness by proving that DEC solutions to the scalar Poisson problem in arbitrary dimensions converge
to the exact solution as the mesh size tends to $0$. 

Developing an original framework to study the convergence of DEC allows us to explore to what extent the theory is compatible in the sense of \parencite{arnold2006}. At the turn of the millennium, compatibility appeared as a conducive paradigm for stability. In this spirit, we here reproduce at the discrete level a standard variational technique in the analysis of PDEs based on the use of the Poincar\'e inequality.

In what follows, we would like to describe our results in more detail.
Let $K_h$ be a family of $n$-dimensional completely well-centered simplicial complexes triangulating a $n$-dimensional polytope $\mathcal{P}$ in $\R^n$,
with the parameter $h>0$ representing the diameter of a typical simplex in the complex.
Let $\Delta_h=\delta_h\dif_{\,h}+\dif_{\,h}\delta_h$ be their associated discrete Hodge-Laplace operators, where the discrete exterior derivatives $\dif_{\,h}$ and the codifferentials $\delta_h=(-1)^{n(k-1)+1}\hodge_h\dif_{\,h}\hodge_h$ are defined as in \parencite{Desbrun2005,Hirani03} up to a sign. Denote by $C^k(K_h)$ the space of $k$-cochains over $K_h$. In this paper, we study the convergence of solutions $\omega_h\in C^k(K_h)$ solving discrete Hodge-Laplace Dirichlet problems of the form
\begin{equation}\label{eq: Discrete Poisson Dirichlet}
\begin{cases}
\Delta_h \omega_h = R_h f& \mbox{in } K_h\\
\omega_h=R_h g& \mbox{on } \partial K_h,
\end{cases}
\end{equation}
where $f$ and $g$ are differential forms, and $R_h$ is the deRham operator,
cf. \parencite{Whitney57}. It is shown in \parencite{Xu04,Xu04b} that under {\em very} special symmetry assumptions on $K_h$, the consistency estimate
\begin{equation*}
\|\Delta_h R_h \omega - R_h\Delta \omega\|_{\infty}= O(h^2)
\end{equation*}
holds for sufficiently smooth functions $\omega$.
However, as the numerical experiments from \parencite{Xu04,Nong04} revealed, this does \textit{not} hold for general triangulations.
In Section~\ref{s:consistency}, we show that a common shape regularity assumption on $K_h$ can only guarantee
\begin{equation}\label{eq: consistency estimate}
\|\Delta_h R_h \omega - R_h\Delta \omega\| = O(1)+O(h)
\end{equation}
for sufficiently smooth functions $\omega$, in both the maximum $\|\cdot\|_\infty$ and discrete $L^2$-norm $\|\cdot\|_h$.

Although the consistency estimate \eqref{eq: consistency estimate} is not adequate for the Lax-Richtmyer framework,
by making use of a special structure of the error term itself, we are able to establish convergence for $0$-forms in general dimensions.
Namely, we prove in Section \ref{s:stability} that the approximations $u_h\in C^0(K_h)$ obtained from solving (\ref{eq: Discrete Poisson Dirichlet}) still satisfy
\begin{equation}\label{intro convergence}
\|\omega_h-R_h \omega\|_h = O(h) ,
\qquad\text{and}\qquad
\|\dif\,(\omega_h-R_h \omega)\|_h=O(h) ,
\end{equation}
where $\omega\in C^2(\mathcal{P})$ is the solution of the corresponding continuous Poisson problem with source term $f$ and boundary condition $g$.
We remark that a convergence proof in 2 dimensions can be obtained by reinterpreting the discrete problem as arising from affine finite elements,
cf. \parencite{HPW06,Wardetzky08}, which shows that
the first quantity in \eqref{intro convergence} should indeed be $O(h^2)$ if the exact solution is sufficiently regular.
This is consistent with the numerical experiments from \parencite{Nong04},
and moreover, in Section \ref{s:numerics}, we report some numerical experiments in 3 dimensions, which suggest that one has $O(h^2)$ convergence in general dimensions.
Therefore, our $O(h)$ convergence result for $0$-forms should be considered only as a first step towards a complete theoretical understanding of the convergence behavior of discrete exterior calculus.
Apart from explaining the $O(h^2)$ convergence for $0$-forms, proving convergence for general $p$-forms remains as an important open problem.

This paper is organized as follows.
In the next section, we review the basic notions of discrete exterior calculus, not only to fix notations,
but also to discuss issues such as the boundary of a dual cell in detail to clarify some inconsistencies existing in the current literature.
Then in Section~\ref{s:consistency}, we treat the consistency question, and in Section~\ref{s:stability}, we establish stability of DEC for the scalar Poisson problem.
Our main result \eqref{intro convergence} is proved in Section~\ref{s:stability}.
We end the paper by reporting on some numerical experiments, in Section \ref{s:numerics}.

\section{Discrete environment}

We review the basic definitions involved in discrete exterior calculus that are needed for the purposes of this work. Readers unaquainted with these discrete structures are encouraged to go over the material covered in the important works cited below. The shape regularity condition that we impose on our triangulations is discussed in Subsection \ref{subs: Discrete Structures}, and a new definition for the boundary of a dual cell is introduced in Subsection \ref{subs: Discrete Operators}.

\subsection{Simplicial complexes and regular triangulations}
\label{subs: Discrete Structures}

The basic geometric objects upon which DEC is designed are borrowed from algebraic topology. While the use  of cube complexes is discussed in \parencite{BH12}, we here consider simplices to be the main building blocks of the theory. The following definitions are given in \parencite{Desbrun2005} and can also be found in any introductory textbook on simplicial homology.

By a \textit{$k$-simplex} in $\mathbb{R}^n$, we mean the $k$-dimensional convex span
	\begin{equation*}
		\sigma=\left.\lbrack v_0,...,v_k\rbrack=\left\{\sum_{i=0}^k a_i v_i\right\vert v_i\in\mathbb{R}^n, a_i\geq0,\sum_{i=0}^k a_i =1\right\},
	\end{equation*}
where $v_0,...,v_k$ are affinely independent. We denote its circumcenter by $c(\sigma)$.
Any $\ell$-simplex $\tau$ defined from a nonempty subset of these vertices is said to be a face of $\sigma$, and we denote this partial ordering by $\sigma\succ\tau$. We write $\vert\sigma\vert$ for the $k$-dimensional volume of a $k$-simplex $\sigma$ and adopt the convention that the volume of a vertex is $1$. The plane of $\tau$ is defined as the smallest affine subspace of $\mathbb{R}^n$ containing $\tau$ and is denoted by $P(\tau)$.
A simplicial \textit{$n$-complex} $K$ in $\mathbb{R}^n$ is a collection of $n$-simplices in $\mathbb{R}^n$ such that:
\begin{itemize}
		\item Every face of a simplex in $K$ is in $K$;
		\item The intersection of any two simplices of $K$ is either empty or a face of both.
\end{itemize}
It is well-centered if the the circumcenter of a simplex of any dimension is contained in the interior of that simplex. If the set of simplices of a complex $L$ is a subset of the simplices of another complex $K$, then $L$ is called a \textit{subcomplex} of $K$. We denote by $\Delta_k(L)$ the set of all elementary $k$-simplices of $L$. The \textit{star} of a $k$-simplex $\sigma \in K$ is defined as the set $\text{St}(\sigma)=\{\rho\in K \vert \sigma \prec \rho\}$. $\text{St}(\sigma)$ is not closed under taking faces in general. It is thus useful to define the closed star $\overline{\text{St}}(\sigma)$ to be the smallest subcomplex of $K$ containing $\text{St}(\sigma)$. We denote the free abelian group generated by a basis consisting of the oriented k-simplices by $C_k(K)$. This is the space of finite formal sums of $k$-simplices with coefficients in $\mathbb{Z}$. Elements of $C_k(K)$ are called \textit{$k$-chains}. More generally, we will write $\oplus_kC_k(K)$ for the space of finite formal sums of elements in $\cup_k C_k(K)$ with coefficients in $\mathbb{F}_2$. The \textit{boundary} of a $k$-chain is obtained using the linear operator $\partial:\oplus_k C_k(K)\longrightarrow \oplus_k C_{k-1}(K)$ defined on a simplex $[v_0,...,v_k]\in K$ by
\begin{equation*}
\partial\lbrack v_0,...,v_k\rbrack = \sum_{i=0}^k (-1)^i \lbrack v_0,...,\hat{v_i},...,v_k\rbrack.
\end{equation*}

 Any simplicial $n$-complex $K$ in $\mathbb{R}^n$ such that $\cup_{\sigma\in\Delta_n(K)}\sigma=\mathcal{P}$ is called a \textit{triangulation} of $\mathcal{P}$. We consider in this work families of well-centered triangulations $K_h$ in which each complex is indexed by the size $h>0$ of its longest edge. We write $\gamma_{\tau}$ for the radius of the largest $\dim(\tau)$-ball contained in $\tau$. The following condition imposed on $K_h$ is common in the finite element literature, see, e.g., \parencite{Ciarlet02}. We suppose there exists a shape regularity constant $C_{reg}>0$, independent of $h$, such that for all simplex $\sigma$ in $K_h$,
\begin{equation*}
\diam(\sigma)\leq C_{\text{reg}}\,\gamma_{\sigma}.
\end{equation*}
A family of triangulations satisfying this condition is said to be \textit{regular}. Important consequences of this regularity assumption will later follow from the next lemma.
\begin{lemma}\label{lem: bounded number of simplices}
Let $\{K_h\}$ be a regular family of simplicial complexes triangulating an $n$-dimensional polytope $\mathcal{P}$ in $\mathbb{R}^n$.  Then there exists a positive constant $C_{\#}$, independent of $h$, such that
\begin{equation*}
\#\left\{\sigma\in\Delta_n(K_h) : \sigma\in \overline{\mathrm{St}}(\tau)\right\}\leq C_{\#},\,\,\,\,\,\forall\,\tau\in\Delta_k(K_h).
\end{equation*}
\end{lemma}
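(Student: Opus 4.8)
My plan is to bound the count by a scale-invariant solid-angle packing argument. This scale invariance is the whole point: shape regularity does \emph{not} force neighbouring simplices to have comparable sizes, so a naive volume-packing estimate (disjoint fat simplices inside a ball of radius $h$ centered at a vertex of $\tau$) would only give a bound like $C_{\text{reg}}^{\,n}(h/\diam(\tau))^n$, which blows up when $\tau$ is much smaller than $h$ and is vacuous when $\tau$ is a vertex. Counting angles instead of volumes removes all dependence on the individual simplex sizes.

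First I would reduce to a vertex star. Since $n$ is the top dimension, taking the closure of $\mathrm{St}(\tau)$ only adds lower-dimensional faces and creates no new $n$-simplices, so the $n$-simplices of $\overline{\mathrm{St}}(\tau)$ are exactly the $n$-simplices having $\tau$ as a face. Fixing any vertex $v$ of $\tau$, every such $\sigma$ contains $v$, and I will count them through the solid angles they subtend at $v$. For an $n$-simplex $\sigma$ with vertex $v$, set $\Omega_v(\sigma)=\mathcal{H}^{n-1}\!\left(\{u\in S^{n-1}: v+tu\in\sigma\ \text{for all sufficiently small } t>0\}\right)$. Two facts then close the argument: (i) an upper bound $\sum_{\sigma}\Omega_v(\sigma)\le |S^{n-1}|$, since distinct simplices of a complex meet only in a common face, so the open vertex cones at $v$ have pairwise disjoint interiors and their direction sets overlap only in measure zero; and (ii) a uniform lower bound $\Omega_v(\sigma)\ge\omega_0>0$ with $\omega_0$ depending only on $n$ and $C_{\text{reg}}$. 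Dividing yields the claim with $C_{\#}=|S^{n-1}|/\omega_0$.

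The crux is the fatness-to-angle estimate (ii). Here I would invoke the inball: $\sigma$ contains a ball $B(p_\sigma,\gamma_\sigma)$, and by convexity the cone from $v$ over that ball is contained in $\sigma$, so $\Omega_v(\sigma)$ is at least the solid angle subtended at $v$ by $B(p_\sigma,\gamma_\sigma)$. That subtended angle is a strictly increasing function of the ratio $\gamma_\sigma/|v-p_\sigma|$, and regularity gives $|v-p_\sigma|\le\diam(\sigma)\le C_{\text{reg}}\,\gamma_\sigma$, whence $\gamma_\sigma/|v-p_\sigma|\ge 1/C_{\text{reg}}$, a fixed positive number. This bounds $\Omega_v(\sigma)$ below independently of the diameter of $\sigma$, which is precisely the scale invariance the argument needs.

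I expect the only mildly technical point to be confirming the monotonicity and explicit positivity of the ``ball seen from an external point'' solid angle in arbitrary dimension $n$, together with the careful (but routine) verification that the vertex cones of distinct $n$-simplices meeting at $v$ have overlapping direction sets only on the shared faces, hence of $\mathcal{H}^{n-1}$-measure zero. Both are elementary, so the substance of the lemma is entirely contained in the lower bound (ii).
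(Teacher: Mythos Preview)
Your proposal is correct and follows essentially the same approach as the paper: reduce to a vertex $v$, bound the number of $n$-simplices at $v$ by a solid-angle packing argument on $S^{n-1}$, and obtain the uniform lower bound on each simplex's solid angle from the cone over its inscribed ball together with the regularity inequality $\diam(\sigma)\le C_{\text{reg}}\gamma_\sigma$. Your write-up is in fact a bit cleaner than the paper's (you invoke convexity directly to place the cone inside $\sigma$ and you state the disjointness of the direction sets up to $\mathcal{H}^{n-1}$-measure zero), but the substance is the same.
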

\begin{proof}
If $\tau\prec\sigma$, then $\sigma$ contains all the vertices of $\tau$. It is thus sufficient to consider the case where $\tau$ is a vertex. Let $v$ be a vertex in $K_h$, and suppose $\sigma\in\overline{\text{St}}(v)$. Let $B_\sigma$ be the $n$-dimensional ball of radius $\gamma_\sigma$ contained in $\sigma$. The set $A=\cup_{\eta\in\Delta_{k-1}(\sigma)}\eta\cap B_{\sigma}\cap\text{St}(\sigma)$ contains exactly $n$ points in $\mathbb{R}^n$. The argument
\begin{equation*}
\theta=\min_{A}\arctan \left(\frac{\gamma_{\sigma}}{\vert x - v\vert}\right)
\end{equation*}
is thus well-defined, and it satisfies $\theta\geq \arctan\left(1/C_{\text{reg}}\right)$ by the regularity assumption.  Consider an $n$-dimensional cone of height $\gamma_{\sigma}$, apex $v$ and aperture $2\theta$ contained in $\sigma$. Its generatrix determine a spherical cap $V_\sigma$ on the hypersphere $S_{\sigma}$ of radius $h$ centered at $v$. The intersection of spherical caps determined by  cones contained in distinct simplices is countable. Therefore, there can be at most
\begin{equation*}
\frac{\vol (S_{\sigma})}{V(\arctan\left(1/C_{\text{reg}}\right))} = 2\pi\left(\int_0^{\arctan(1/C_{\text{reg}})}\sin^n(t)dt\right)^{-1}
\end{equation*}
of them, and only so many distinct $n$-simplices containing $v$ as a result.
\end{proof}

\subsection{Combinatorial orientation and duality}\label{sec: Combinatorial Orientation and Duality}
A triangulation stands as a geometric discretization of a polytopic domain, but it ought to be equipped with a meaningful notion of orientation if a compatible discrete calculus is to be defined on its simplices. We outline below the exposition found in \parencite{Hirani03}.

We felt the need to review the definitions of Subsection \ref{subs: Discrete Structures} partly to be able to stress the fact that the expression of a $k$-simplex  $\sigma$ comes naturally with an ordering of its vertices. Defining two orderings to be equivalent if they differ by an even permutation yields two equivalence classes called \textit{orientations}. The vertices themselves are dimensionless. As such, they are given no orientation. By interpreting a permutation $\rho$ of the vertices in $\sigma$ as an ordering for the basis vectors $v_{\rho(1)}-v_{\rho(0)},v_{\rho(2)}-v_{\rho(1)},...,v_{\rho(k)}-v_{\rho(k-1)}$, we see that these equivalence classes coincide with the ones obtained when the affine space $P(\sigma)$ is endowed with an orientation in the usual sense. A simplex is thus oriented by its plane and vice versa. The planes of the $(k-1)$-faces of $\sigma$ inherits an orientation as subspaces of $P(\sigma)$. Correspondingly, we establish that the \textit{induced orientation} by $[v_0,...,v_k]$ on the $(k-1)$-face $v_0,...,\hat{v_i},...,v_k$ is the same as the orientation of $[v_0,...,\hat{v_i},...,v_k]$ if $i$ is even, while the opposite orientation is assigned otherwise.
Two $k$-simplices $\sigma$ and $\tau$ in $\mathbb{R}^n$ are hence comparable if $P(\sigma)=P(\tau)$, or if they share a ($k-1$)-face. In the first case, the \textit{relative orientation} $\text{sign}(\sigma,\tau)=\pm 1$ of the two simplices is $+1$ when their bases yield the same orientation of the plane and $-1$ otherwise. The induced orientation on the common face is similarly used to establish relative orientation in the second case. The mechanics of orientation are conveniently captured by the structure of exterior algebra. For example, $v_1-v_0\wedge ... \wedge v_k-v_{k-1}$ could be used to represent the orientation of $\lbrack v_0,...,v_k\rbrack$. From now on, we will always assume that $K_h$ is well-centered and that all its $n$-simplices  are positively oriented with respect to each other. The orientations of lower dimensional simplices are chosen independently.

We are now ready to introduce the final important objects pertaining to the discrete domain before we move on to the functional aspects of DEC. Denote by $D_h$ the smallest simplicial $n$-complex in $\mathbb{R}^n$ containing every simplex of the form $[c(v),...,c(\tau),...,c(\sigma)]$, where the simplices $v\prec ...\prec\tau\prec ...\prec \sigma$ belong to $K_h$. Let $\dual: \oplus_kC_k(K_h)\longrightarrow \oplus_kC_{n-k}(D_h)$ be the homomorphism acting on $\tau\in K_h$ by
\begin{equation*}\label{def: circumcentric duality operator}
\dual\tau = \sum_{\tau\prec...\prec\sigma}\pm_{\sigma,...,\tau}[c(\tau),...,c(\sigma)],
\end{equation*}
where $\pm_{\sigma,...,\tau}=\sgn\left(\lbrack v,...,c\left(\tau\right)], \tau\right)
\sgn\left(\lbrack v,...,c\left(\sigma\right)\rbrack, \sigma\right)$. We define the \textit{oriented circumcentric dual} of $K_h$ by $\dual K_h=\{\dual\tau \,\vert\, \tau\in K_h\}$ (see Figure \ref{fig: example of dual}). Note that the space $C_{n-k}(\dual K_h)$ of finite formal sums of ($n-k$)-dimensional elements of $\dual K_h$ with integer coefficients is a subgroup of $C_{n-k}(D_h)$ onto which the restriction of $\dual$ to $C_k(K_h)$ is an isomorphism. In fact, since a simplex whose vertices are $c(\tau),...,c(\sigma)$ is oriented in the above formula to satisfy $ P(\tau)\times P(c(\tau),...,c(\sigma))\sim \sigma$, we effortlessly extend the definition of $\dual$ to $\oplus_k C_k(\dual K_h)$ as well by defining $\dual\dual\tau=(-1)^{k(n-k)}\tau$ for every simplex $\tau\in\Delta_k(K_h)$, $k=1,...,n$.
\begin{figure}
	\centering
	\begin{subfigure}[b]{0.45\textwidth}
		\centering
		\begin{tikzpicture}[scale = 0.7,line cap=round,line join=round,>=triangle 45,x=1.0cm,y=1.0cm]
		\draw [dash pattern=on 5pt off 5pt,color=qqqqff] (-1.,0.)-- (3.02,-2.62);
		\draw [dash pattern=on 5pt off 5pt,color=qqqqff] (3.02,-2.62)-- (7.,0.);
		\draw [line width=2.4pt,color=qqqqff] (7.,0.)-- (3.04,2.5);
		\draw [line width=2.4pt,color=qqqqff] (3.04,2.5)-- (3.02,-2.62);
		\draw [line width=2.4pt,color=ffqqqq] (1.48,-0.02)-- (4.5,0.);
		\draw [dash pattern=on 5pt off 5pt,color=qqqqff] (3.04,2.5)-- (-1.,0.);
		\draw [line width=2.4pt,color=ffqqqq] (4.5,0.)-- (5.212442320669719,1.1285086359408343);
		\draw (3.1036881946132455,-1.4) node[anchor=north west] {$\sigma_1$};
		\draw (1.8306602313282923,-0.22120798958725374) node[anchor=north west] {$\star \sigma_1$};
		\draw (6.107238545488682,1.0916020975503622) node[anchor=north west] {$\sigma_2$};
		\draw (5.0,0.5512752833293447) node[anchor=north west] {$\star\sigma_2$};
		\draw (3.031566901139814,0.3411266917924226)-- (3.32,0.34);
		\draw (3.32,0.34)-- (3.3223033067274796,-0.0077993158494868865);
		\draw (0.9,1.52)-- (0.6303039217423666,1.3531088624643357);
		\draw (0.9,1.52)-- (1.0540431338496163,1.27106629569902);
		\draw [dash pattern=on 5pt off 5pt,color=ffqqqq] (1.48,-0.02)-- (-0.18604292467069117,-2.5762948691512126);
		\draw [dash pattern=on 5pt off 5pt,color=ffqqqq] (4.5,0.)-- (6.119807658452332,-2.4606238475726263);
		\draw [dash pattern=on 5pt off 5pt,color=ffqqqq] (5.212442320669718,1.1285086359408345)-- (6.501264118952639,3.1700023644209794);
		\draw [dash pattern=on 5pt off 5pt,color=ffqqqq] (0.7843470555919828,1.1041751581633557)-- (1.48,-0.02);
		\draw [dash pattern=on 5pt off 5pt,color=ffqqqq] (0.7843470555919828,1.1041751581633557)-- (-0.496594014052013,3.2000546617410786);
		\begin{scriptsize}
		\draw [fill=qqqqff] (-1.,0.) circle (1.0pt);
		\draw [fill=qqqqff] (3.04,2.5) circle (2.5pt);
		\draw [fill=qqqqff] (3.02,-2.62) circle (2.5pt);
		\draw [fill=qqqqff] (7.,0.) circle (2.5pt);
		\draw [fill=ffqqqq] (1.48,-0.02) circle (2.5pt);
		\draw [fill=ffqqqq] (4.5,0.) circle (2.5pt);
		\draw [fill=ffqqqq] (5.212442320669718,1.1285086359408345) circle (2.5pt);
		\draw [fill=ffqqqq] (4.5,0.) circle (2.5pt);
		\draw [fill=ffqqqq] (0.7843470555919828,1.1041751581633557) circle (2.5pt);
		\end{scriptsize}
		\end{tikzpicture}
	\end{subfigure}
	\begin{subfigure}[b]{0.45\textwidth}
		\centering
		\begin{tikzpicture}[scale=0.73,line cap=round,line join=round,>=triangle 45,x=1.0cm,y=1.0cm]
		(14.359768562151523,7.709972748929639);
		\fill[fill=black,fill opacity=0.55] (3.04,2.5) -- (0.7843470555919828,1.1041751581633557) -- (1.48,-0.02) -- cycle;
		\fill[fill=black,fill opacity=0.15] (3.04,2.5) -- (5.212442320669718,1.1285086359408345) -- (4.5,0.) -- cycle;
		\fill[fill=black,fill opacity=0.4] (3.04,2.5) -- (1.48,-0.02) -- (3.030196352347691,-0.009733798991074892) -- cycle;
		\fill[fill=black,fill opacity=0.3] (3.04,2.5) -- (3.030196352347691,-0.009733798991074892) -- (4.5,0.) -- cycle;
		\draw [dash pattern=on 7pt off 7pt] (7.,0.)-- (3.04,2.5);
		\draw [dash pattern=on 7pt off 7pt] (3.04,2.5)-- (-1.,0.);
		\draw (3.1915055989405454,3.152042040793411) node[anchor=north west] {$\sigma$};
		\draw (3.04,2.5)-- (0.7843470555919828,1.1041751581633557);
		\draw (1.48,-0.02)-- (3.04,2.5);
		\draw (3.04,2.5)-- (5.212442320669718,1.1285086359408345);
		\draw (4.5,0.)-- (3.04,2.5);
		\draw [dash pattern=on 7pt off 7pt] (3.02,-2.62)-- (3.030196352347691,-0.009733798991074892);
		\draw (3.04,2.5)-- (3.030196352347691,-0.009733798991074892);
		\draw (3.04,2.5)-- (1.48,-0.02);
		\draw (3.030196352347691,-0.009733798991074892)-- (3.04,2.5);
		\draw (3.04,2.5)-- (3.030196352347691,-0.009733798991074892);
		\draw (4.5,0.)-- (3.04,2.5);
		\draw [line width=2.4pt,color=ffqqqq] (4.5,0.)-- (1.48,-0.02);
		\draw [line width=2.4pt,color=ffqqqq] (0.7843470555919828,1.1041751581633557)-- (1.48,-0.02);
		\draw [line width=2.4pt,color=ffqqqq] (4.5,0.)-- (5.212442320669718,1.1285086359408345);
		\draw (4.00,1.4461498459354074) node[anchor=north west] {$\star\sigma$};
		\draw [line width=2.8pt,color=ffqqqq] (0.7843470555919828,1.1041751581633557)-- (3.04,2.5);
		\draw [line width=2.8pt,color=ffqqqq] (3.04,2.5)-- (5.212442320669718,1.1285086359408345);
		\begin{scriptsize}
		\draw [fill=ffqqqq] (3.04,2.5) circle (2.5pt);
		\draw [fill=ffqqqq] (1.48,-0.02) circle (2.5pt);
		\draw [fill=ffqqqq] (4.5,0.) circle (2.5pt);
		\draw [fill=ffqqqq] (0.7843470555919828,1.1041751581633557) circle (2.5pt);
		\draw [fill=ffqqqq] (5.212442320669718,1.1285086359408345) circle (2.5pt);
		\draw [fill=ffqqqq] (3.0300527348996282,-0.04649986569513942) circle (2.5pt);
		\end{scriptsize}
		\end{tikzpicture}
	\end{subfigure}
	\caption{Boundary and interior faces are perpendicular to their duals. On the left is shown a $1$-dimensional complex in blue and its dual in red. On the right, each shade of grey indicates a $2$-dimensional simplex in $D_h$. The dual of the vertex $\sigma$ is a complex of these simplices, and its boundary is colored in red.}\label{fig: example of dual}
\end{figure}
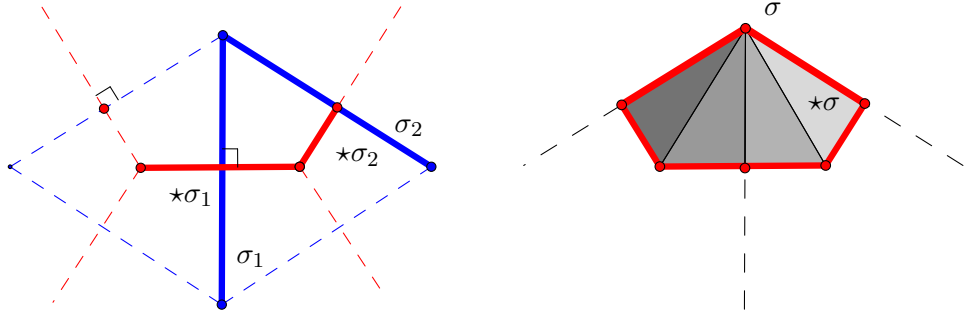\

\subsection{Discrete operators}\label{subs: Discrete Operators}
 A strength of discrete exterior calculus is foreseen in the following objects. The theory is built on a natural and intuitive notion of discrete differential forms having a straightforward implementation.  We call \textit{$k$-cochains} the elements of the space $C^k(K_h) = \text{Hom}(C_k(K_h), \mathbb{R})$, and define $\oplus_kC^k(K_h)$, $C^k(\dual K_h)$ and $\oplus_kC^k(\dual K_h)$ as expected. The \textit{discrete Hodge star} is an isomorphism $\hodge_h:\oplus_k C^{k}(K_h)\longrightarrow \oplus_kC^{n-k}(\dual K_h)$ satisfying
\begin{equation*}\label{def: Hodge star}
\langle \hodge_h\omega_h,\dual\sigma\rangle = \frac{\vert\dual\sigma\vert}{\vert\sigma\vert}\langle \omega_h,\sigma\rangle,\,\,\,\,\,\forall\,\omega\in K_h.
\end{equation*}
In complete formal analogy, we further impose that
\begin{equation*}
\langle \hodge_h\hodge_h\omega_h,\dual\dual\sigma\rangle = \left(\frac{\vert\dual\sigma\vert}{\vert\sigma\vert}\right)^{-1}\langle \hodge_h\omega_h,\dual\sigma\rangle,\,\,\,\,\,\forall\,\omega\in K_h.
\end{equation*}
In other words, $\langle \hodge_h\hodge_h\omega_h,\sigma\rangle=(-1)^{k(n-k)}\langle \hodge_h\hodge_h\omega_h,\dual\dual\sigma\rangle=(-1)^{k(n-k)}\langle \omega_h,\sigma\rangle$ for all $k$-cochains $\omega_h$, and as a consequence $\hodge_h\hodge_h=(-1)^{k(n-k)}$ on $C^k(K_h)$. The spaces $C^k(K_h)$ are finite dimensional Hilbert spaces when equipped with the discrete inner product
\begin{equation*}
\left(\alpha_h,\beta_h\right)_h=\sum_{\tau\in\Delta_k(K_h)}\langle\alpha_h,\tau\rangle\langle\hodge_h\beta_h,\dual\tau\rangle,\,\,\,\,\,\alpha_h,\beta_h\in C^k(K_h).
\end{equation*}
A compatible definition of the discrete $L^2$-norm on the dual triangulation is also chosen by enforcing the Hodge star to be an isometry. The norm in the following definition is obtained from the inner product
\begin{equation*}
\left(\hodge\alpha_h,\hodge\beta_h\right)_h=\sum_{\tau\in\Delta_k(K_h)}\langle\hodge\alpha_h,\dual\tau\rangle\langle\hodge_h\hodge_h\beta_h,\dual\dual\tau\rangle,\,\,\,\,\,\hodge\alpha_h,\hodge\beta_h\in C^k(\dual K_h),
\end{equation*}
which is immediatly seen to satisfy $\left(\alpha_h,\beta_h\right)_h=\left(\hodge\alpha_h,\hodge\beta_h\right)_h$.
\begin{definition}
	The discrete $L^2$-norm on $C^k(\dual K_h)$ is defined by
	\begin{equation*}
	\|\hodge\omega_h\|^2_h = \sum_{\tau\in\Delta_k(K_h)}\left(\frac{\vert\dual\tau\vert}{\vert\tau\vert}\right)^{-1}\langle\hodge_h\omega_h,\dual\tau\rangle^2,\,\,\,\,\, \omega_h\in C^k(\dual K_h).
	\end{equation*}
\end{definition}

The last step towards a full discretization of the Hodge-Laplace operator is to discretize the exterior derivative. While requiring that
\begin{equation*}
 \langle \dif\omega_h,\tau\rangle =\langle \omega_h,\partial\tau\rangle,\,\,\,\,\,\forall\,\tau\in K_h,
\end{equation*}
readily defines a \textit{discrete exterior derivative} $\dif_{\,h}:\oplus_kC^k(K_h)\longrightarrow \oplus_kC^{k+1}(K_h)$ that satisfies Stokes' theorem, we need to make precise what is meant by the boundary of a dual element if we hope to define the exterior derivative on $C^k(\dual K_h)$ similarly. We examine the case of a $k$-simplex $\tau=\pm_{\tau}[v_0,...,v_k]$ in $K_h$. It is sufficient to restrict our attention to an $n$-dimensional simplex $\sigma$ to which $\tau$ is a face. We assume that the orientation of $\sigma$ is consistent with
$
\left(v_1-v_0\right)\wedge\left(v_2-v_1\right)\wedge...\wedge\left(v_n-v_{n-1}\right),
$
and thereon begin our study. The orientation of $\tau$ is represented by
$
\pm_{\tau}\left(v_1-v_0\right)\wedge...\wedge\left(v_k-v_{k-1}\right)
$. We have seen that enforcing the orientation of its circumcentric dual
$
\dual\tau=\pm_{\dual\tau}[c(\tau),c([v_0,...,v_{k+1}]),...,c(\sigma)]
$
to satisfy the definition given in Section \ref{sec: Combinatorial Orientation and Duality} is equivalent to requiring that
$
\pm_{\tau}\pm_{\dual \tau}\left(v_1-v_0\right)\wedge...\wedge\left(v_n-v_{n-1}\right) \sim \sigma
$.
Consequently,  $\pm_{\tau}=\pm_{\dual \tau}$ by hypothesis (i.e. the signs agree), and we obtain
$
\dual\tau\sim \pm_{\tau}\left(v_{k+1}-v_k\right)\wedge...\wedge\left(v_n-v_{n-1}\right)
$.
Since similar equivalences hold for any  $(k+1)$-face $\eta=\pm_{\eta}[v_0,...,v_{k+1}]$ of $\sigma$, the orientation induced by $\eta$ on the face $v_0,...,v_k$ is defined to be the same as the orientation of
\begin{equation*}
(-1)^{k+1}\pm_{\eta}\pm_{\tau}\tau
\sim(-1)^{k+1}\pm_{\eta}\left(v_1-v_0\right)\wedge...\wedge\left(v_{k}-v_{k-1}\right).
\end{equation*}
Therefore, choosing $\pm_{\text{old}}=(-1)^{k+1}\pm_{\eta}\pm_{\tau}$ makes the induced orientation of $\pm_{\text{old}}\eta$ on that face consistent with the orientation of $\tau$, and yields on the one hand that the orientation of $c(\eta)$, ..., $c(\sigma)$ as a piece of $\dual\pm_{\text{old}}\eta$ is given by
\begin{equation*}
(-1)^{k+1}\pm_{\tau}(\pm_{\eta})^2\left(v_{k+2}-v_{k+1}\right)\wedge...\wedge\left(v_n-v_{n-1}\right).
\end{equation*}
 On the other hand, $P\left(\pm_{\text{new}}\left(\dual\pm_{\text{old}}\eta\right)\right)$ is a subspace of codimension $1$ in $P(\dual\tau)$. As such, it inherits a boundary orientation in the usual sense through the assignement
\begin{equation*}
P(\dual\tau)\sim\pm_{\text{new}}(-1)^{k+1}\pm_{\tau}\overrightarrow{\nu}\wedge\left(v_{k+2}-v_{k+1}\right)\wedge...\wedge\left(v_n-v_{n-1}\right),
\end{equation*}
where $\overrightarrow{\nu}$ is any vector pointing away from $P(\dual\eta)$. Since by hypothesis $\overrightarrow{\nu}=(v_{k+1}-v_k)$ is a valid choice of outward vector, we conclude that the above compatible relation
holds if and only if $\pm_{\text{new}}=(-1)^{k+1}$. The claim that the following new definition for the boundary of a dual element is suited for integration by parts rests on the later observation.

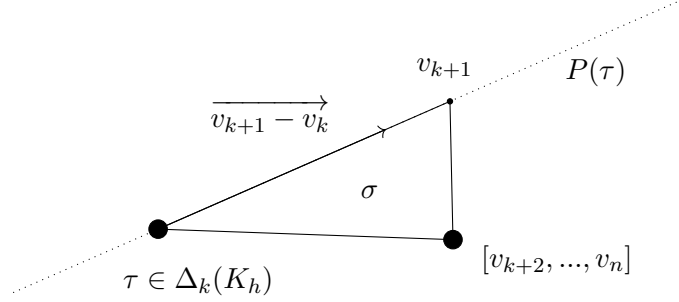
\begin{figure}
	\centering
	\begin{tikzpicture}
	\draw (2.12,1.14)-- (6.,2.84);
	\draw (1.52,0.78) node[anchor=north west] {$\tau\in\Delta_k(K_h)$};
	\draw (5.42,3.54) node[anchor=north west] {$v_{k+1}$};
	\draw (2.68,3.00) node[anchor=north west] {$\overrightarrow{v_{k+1}-v_k}$};
	\draw [dotted] (0.19310314081273283,0.2957410668509395)-- (9.06181048126435,4.181514901584896);
	\draw (2.12,1.14)-- (6.04,1.);
	\draw (6.04,1.)-- (6.,2.84);
	\draw (6.28,1.06) node[anchor=north west] {$[v_{k+2},...,v_n]$};
	\draw (4.68,1.86) node[anchor=north west] {$\sigma$};
	\draw (7.4,3.56) node[anchor=north west] {$P(\tau)$};
	\draw [->,line width=0.4pt] (2.12,1.14) -- (5.148252602483226,2.4668117072735782);
	\begin{scriptsize}
	\draw [fill=black] (2.12,1.14) circle (3.5pt);
	\draw [fill=black] (6.,2.84) circle (1.0pt);
	\draw [fill=black] (6.04,1.) circle (3.5pt);
	\end{scriptsize}
	\end{tikzpicture}
	\caption{This figure illustates the exposition of Section \ref{subs: Discrete Operators}. The plane in which the triangle lies is a representation for the $n$-dimensional plane of $\sigma$, while the dotted line represents $P(\tau)$.}
\end{figure}

\begin{definition}\label{def: boundary of dual}
The linear operator $\partial: \oplus_kC_{n-k}\left(\dual K_h,\mathbb{Z}\right)\longrightarrow \oplus C_{n-k-1}\left(\dual K_h,\mathbb{Z}\right)$, which we call the \textit{dual boundary operator}, is defined as the linear operator acting on the dual of $\tau=[v_0,...,v_k]\in K_h$ by
\begin{equation*}
\partial\dual\tau = (-1)^{k+1} \sum_{\eta\succ\tau}\dual\eta,
\end{equation*}
where $\eta$ is a $(k+1)$-simplex in $K_h$ oriented so that the induced orientation of $\eta$ on the face $v_0,...,v_k$ is consistent with the orientation of $\tau$.
\end{definition}

\begin{example}
Let $\sigma=\lbrack v_0,v_1,v_2\rbrack$ be an oriented $2$-simplex. The $2$-dimensional dual of $\tau=v_0$ is given by
\begin{equation*}
\dual \tau = \pm_{v_0,\lbrack v_0,v_1\rbrack,\sigma}\lbrack v_0,c\left(\lbrack v_0,v_1\rbrack\right), c(\sigma) \rbrack +\pm_{v_0,\lbrack v_0,v_2\rbrack,\sigma}\lbrack v_0,c\left(\lbrack v_0,v_2\rbrack\right), c(\sigma) \rbrack,
\end{equation*}
where by definition
\begin{align*}
\pm_{v_0,\lbrack v_0,v_1\rbrack,\sigma} &= \sgn\left(\lbrack v_0,c\left(\lbrack v_0,v_1\rbrack\right), c(\sigma) \rbrack, \sigma\right) = 1;\\
\pm_{v_0,\lbrack v_0,v_2\rbrack,\sigma} &= \sgn\left(\lbrack v_0,c\left(\lbrack v_0,v_2\rbrack\right), c(\sigma) \rbrack, \sigma\right) = -1.
\end{align*}
The orientation of the boundary edges of $\dual\tau$ with endpoints $c([v_0,v_2])$, $c(\sigma)$ and $c([v_0,v_1])$, $c(\sigma)$ are compatible with integration by parts if they are assigned an orientation equivalent to the one given in the above expression. Yet, from the definition of the boundary of a dual cell as found in the literature,
\begin{align*}
\partial\dual\tau &= \dual \left((-1)[v_0,v_1]\right)+\dual\left((-1)[v_0,v_2]\right)\\
& = \dual [v_1,v_0] + \dual [v_2,v_0]\\
&= \pm_{[v_1,v_0],\sigma}[c(v_0,v_2),c(\sigma)] +\pm_{[v_2,v_0],\sigma}[c(v_0,v_1),c(\sigma)],
\end{align*}
where
\begin{align*}
\pm_{[v_1,v_0],\sigma} &= \sgn\left([v_0,c([v_1,v_0])],[v_1,v_0]\right)
\cdot\sgn\left(\lbrack v_0,c\left(\lbrack v_0,v_1\rbrack\right),c(\sigma)\rbrack,\sigma\right)= (-1)(+1)=-1; \\
\pm_{[v_2,v_0]\sigma} &= \sgn\left([v_0,c([v_2,v_0])],[v_2,v_0]\right)
\cdot\sgn\left(\lbrack v_0,c\left(\lbrack v_0,v_2\rbrack\right),c(\sigma)\rbrack,\sigma\right)= (-1)(-1)=+1.
\end{align*}
We illustrate this example in Figure \ref{fig: example 2d dual}.

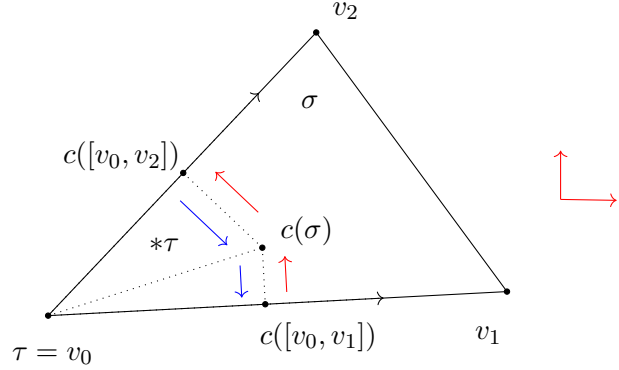
\begin{figure}
\definecolor{ffqqqq}{rgb}{1.,0.,0.}
\definecolor{qqqqff}{rgb}{0.,0.,1.}
\begin{tikzpicture}
\draw (3.14,3.56)-- (-0.4240779728250691,-0.20776608152479198);
\draw (-0.4240779728250691,-0.20776608152479198)-- (5.675922027174931,0.11223391847520807);
\draw (5.675922027174931,0.11223391847520807)-- (3.14,3.56);
\draw [dotted] (-0.4240779728250691,-0.20776608152479198)-- (2.4232725938120594,0.6963661804576204);
\draw (-1.039849439796324,-0.49656573565702594) node[anchor=north west] {$\tau=v_0$};
\draw (2.8062887772578144,2.859378394909821) node[anchor=north west] {$\sigma$};
\draw (0.7826670356051468,0.961447444664151) node[anchor=north west] {$*\tau$};
\draw (3.2210683889009077,4.091148150698402) node[anchor=north west] {$v_2$};
\draw (5.118999339146578,-0.22004599456163035) node[anchor=north west] {$v_1$};
\draw [dotted] (1.371924094155322,1.690878033983527)-- (2.4232725938120594,0.6963661804576204);
\draw [dotted] (2.4232725938120594,0.6963661804576204)-- (2.4627580591631495,-0.05632550279754113);
\draw [->,color=ffqqqq] (2.3690321053383556,1.1670710838847158) -- (1.799605617188187,1.7057139719107774);
\draw [->,color=ffqqqq] (2.7479764647247498,0.1069878641809151) -- (2.722920500440797,0.5846171833437671);
\draw [->,color=qqqqff] (1.3310918011400998,1.32179061725372) -- (1.9576823515357755,0.729073996761772);
\draw [->,color=qqqqff] (2.1275651061286727,0.45934398692386236) -- (2.150025357024407,0.031195454223931884);
\draw (2.5297690361624188,1.2505362649002465) node[anchor=north west] {$c(\sigma)$};
\draw (-0.3611191661985349,2.243493517015531) node[anchor=north west] {$c([v_0,v_2])$};
\draw (2.2532492950670235,-0.15720059885813134) node[anchor=north west] {$c([v_0,v_1])$};
\draw [->,dotted] (-0.4240779728250691,-0.20776608152479198) -- (2.38028849385736,2.7568707569213315);
\draw [->,dotted] (-0.4240779728250691,-0.20776608152479198) -- (4.037006175016316,0.026258004919346323);
\draw [->,color=ffqqqq] (6.388476332357257,1.338519818885145) -- (6.388476332357257,1.9886250851620182);
\draw [->,color=ffqqqq] (6.388476332357257,1.338519818885145) -- (7.134893489934407,1.3259507397444452);
\begin{scriptsize}
\draw [fill=black] (-0.4240779728250691,-0.20776608152479198) circle (1.0pt);
\draw [fill=black] (3.14,3.56) circle (1.0pt);
\draw [fill=black] (5.675922027174931,0.11223391847520807) circle (1.0pt);
\draw [fill=black] (1.371924094155322,1.690878033983527) circle (1.0pt);
\draw [fill=black] (2.4232725938120594,0.6963661804576203) circle (1.0pt);
\draw [fill=black] (2.4627580591631495,-0.05632550279754112) circle (1.0pt);
\end{scriptsize}
\end{tikzpicture}
\caption{In red is shown the orientation of the boundary of $\dual\tau$ as obtained from definition \ref{def: boundary of dual}. The blue vectors represent the orientation as obtained from the definition found in the current literature, which differs from the compatible orientation by a multiple of $(-1)^{1+0}=-1$.} \label{fig: example 2d dual}
\end{figure}
\end{example}

Using Definition \ref{def: boundary of dual}, we can now resolutely extend the exterior derivative to $$\oplus_kC^{n-k}(\dual K_h)$$ by defining it as the homomorphism satisfying
\begin{equation*}
\langle \dif\hodge_h\omega_h,\dual\tau\rangle =\langle \hodge_h\omega_h,\partial\dual\tau\rangle,\,\,\,\,\,\forall\,\tau\in K_h.
\end{equation*}

%%%%%%%%%%%%%%%%%%%%%%%%%%%%%%%%%%%%%%%%
\section{Consistency}
\label{s:consistency}
%%%%%%%%%%%%%%%%%%%%%%%%%%%%%%%%%%%%%%%

In this section,
we develop a framework to address consistency questions for operators such as the Hodge-Laplace operator.
We begin by proving a preliminary lemma which reduces the question of consistency of the Hodge-Laplace operator to the one of consistency of the Hodge star.
We then proceed with a demonstration of the latter, and derive bounds on the norms of various discrete operators.

We denote by $C^r\Lambda^k(\mathcal{O})$ the space of differential $k$-forms $\omega$ on an open set $\mathcal{O}\subset\mathbb{R}^n$ such that the function
\begin{equation*}
x\mapsto \omega_x(X_1(x),...,X_k(x))
\end{equation*}
 is $r$ times continuously differentiable for every $k$-tuples $X_1,...,X_k$ of smooth vector fields on $\mathcal{O}$. Given an $n$-polytope $\mathcal{Q}$, say an $n$-dimensional subcomplex of $K_h$, we write $C^r\Lambda^k(\mathcal{Q})$ for the set of differential $k$-forms obtained by restricting to $\mathcal{Q}$ a differential form in $C^r\Lambda^k(\mathcal{O})$, where $\mathcal{O}$ is any open set in $\mathbb{R}^n$ containing $\mathcal{Q}$. The `exact' discrete representatives for solutions of the continuous problems corresponding to (\ref{eq: Discrete Poisson Dirichlet}) are constructed using the {\em deRham map} $R_h$ defined on $C\Lambda^k(\mathcal{P})$ by
 \begin{equation*}
 \langle R_h\omega, s \rangle = \int_{s}\omega,
 \end{equation*}
 where $s$ is a $k$-dimensional simplex in $K_h$ or the dual of a $(n-k)$-dimensional one.
The next lemma depends on the extremely convenient property that $\dif_{\,h}R_h= R_h\dif\,$.
\begin{lemma}\label{lem: reducing consistency}
Given $\omega\in C^2\Lambda^k(\mathcal{P})$, we have
\begin{multline*}
\Delta_h R_h\omega - R_h \Delta \omega = \star_h \dif_{\,h}\left(\star_h R_h-R_h \star \right)\dif\omega +\left(\star_h R_h - R_h\star\right)d\star d\omega \\
+\dif_{\,h}\left(\star_hR_h-R_h\star\right)d\star \omega +\dif_{\,h} \star_h \dif_{\,h} \left(\star_h R_h -R_h \star\right)\omega.
\end{multline*}
\end{lemma}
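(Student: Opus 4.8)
The plan is to reduce everything to the single fact $\dif_{\,h}R_h=R_h\dif$, so that the \emph{entire} discrepancy is carried by the failure of $\star_h$ to commute with $R_h$. First I would split both sides: $\Delta_hR_h\omega=\delta_h\dif_{\,h}R_h\omega+\dif_{\,h}\delta_hR_h\omega$ and $R_h\Delta\omega=R_h\delta\dif\omega+R_h\dif\delta\omega$, and expand each discrete codifferential through its defining formula $\delta_h=(-1)^{n(k-1)+1}\star_h\dif_{\,h}\star_h$, with the degree read off from the cochain on which it acts. Since the only building block that does not commute with $R_h$ is the Hodge star, I would abbreviate its consistency error by $E:=\star_hR_h-R_h\star$, understood to land in primal or dual cochains according to context, so that at every occurrence one is free to substitute $\star_hR_h=R_h\star+E$.

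Next I would peel the two halves. For $\delta_h\dif_{\,h}R_h\omega$, applying $\dif_{\,h}R_h=R_h\dif$ gives $\delta_h\dif_{\,h}R_h\omega=(-1)^{nk+1}\star_h\dif_{\,h}\star_hR_h\dif\omega$; substituting $\star_hR_h\dif\omega=R_h\star\dif\omega+E\dif\omega$, commuting the freed $\dif_{\,h}$ past $R_h$ once more, and substituting $\star_hR_h$ a second time produces a fully commuted term $R_h\star\dif\star\dif\omega=R_h\delta\dif\omega$ together with exactly two terms each carrying a single factor of $E$, namely $\star_h\dif_{\,h}E\dif\omega$ and $E\,\dif\star\dif\omega$. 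The commuted term cancels $R_h\delta\dif\omega$. Treating $\dif_{\,h}\delta_hR_h\omega$ by the identical procedure -- substitute $\star_hR_h=R_h\star+E$, commute $\dif_{\,h}$ through $R_h$, repeat, and finally apply the outer $\dif_{\,h}$ via $\dif_{\,h}R_h=R_h\dif$ -- rebuilds $R_h\dif\delta\omega$ and leaves the remaining two error terms $\dif_{\,h}E\,\dif\star\omega$ and $\dif_{\,h}\star_h\dif_{\,h}E\,\omega$. Collecting the four $E$-terms and cancelling the reassembled $R_h\Delta\omega$ yields the stated identity.

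The principal difficulty is bookkeeping rather than any isolated estimate. One must track (i) which de Rham map is active -- integration over primal simplices versus over dual cells -- since the intermediate cochains alternate between $C^\bullet(K_h)$ and $C^\bullet(\star K_h)$, together with the two corresponding incarnations of $\star_h$, which are mutually inverse up to the sign $(-1)^{k(n-k)}$; and (ii) the codifferential signs $(-1)^{n(k-1)+1}$, which change with the degree at each stage. These signs merely reshuffle among the four terms, and since the lemma serves only to bound $\|\Delta_hR_h\omega-R_h\Delta\omega\|$ in the ensuing consistency estimate, I would absorb them into the notation $\star_hR_h-R_h\star$ without chasing their exact placement. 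The one genuinely structural point -- and the reason the identity closes up with no contribution from the exterior derivatives -- is that $\dif_{\,h}R_h=R_h\dif$ must persist on the \emph{dual} complex; this is precisely what the dual boundary operator of Definition~\ref{def: boundary of dual} is built to guarantee, ensuring that every error is concentrated in the Hodge star alone.
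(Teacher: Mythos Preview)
Your proposal is correct and follows essentially the same route as the paper: split $\Delta_h R_h-R_h\Delta$ into its $\delta\dif$ and $\dif\delta$ halves, repeatedly use $\dif_{\,h}R_h=R_h\dif$, and at each occurrence of $\star_h R_h$ add and subtract $R_h\star$ so that the commuted pieces rebuild $R_h\Delta\omega$ while the residuals are exactly the four $(\star_hR_h-R_h\star)$ terms. The paper carries out the same two-per-half substitutions (without naming the error $E$) and likewise leaves the codifferential signs implicit.
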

\begin{proof}
Consider the explicit expression
\begin{multline*}
\Delta_h R_h\omega - R_h \Delta \omega = \left(\delta_h \dif_{\,h}+\dif_{\,h} \delta_h\right)R_h\omega -R_h\left(\delta \dif+\dif \delta\right)\omega \\
= \left(\star_h \dif_{\,h} \star_h \dif_{\,h} R_h - R_h \star \dif \star \dif\right)\omega+ \left(\dif_{\,h} \star_h \dif_{\,h}\star_h R_h - R_h \dif\star \dif\star\right)\omega
\end{multline*}
of the discrete Hodge-Laplace operator. Substituting
$$
\star_h \dif_{\,h} R_h \omega = \star_h R_h \dif \omega= \star_h R_h \dif \omega -R_h \star \dif \omega+R_h \star \dif \omega= \left(\star_h R_h-R_h \star \right)\dif\omega +R_h \star \dif\omega
$$
in the first summand, we obtain
\begin{align*}
\star_h \dif_{\,h} \star_h \dif_{\,h} R_h \omega - R_h \star \dif \star \dif \omega
&= \star_h \dif_{\,h}\left(\star_h R_h-R_h \star \right)\dif\omega + \star_h \dif_{\,h} R_h \star d\omega - R_h \star \dif \star \dif\omega \\
&= \star_h \dif_{\,h}\left(\star_h R_h-R_h \star \right)\dif\omega +\left(\star_h R_h - R_h\star\right)\dif\star \dif\omega.
\end{align*}
The second summand can be rewritten as
\begin{align*}
\left(\dif_{\,h} \star_h \dif_{\,h}\star_h R_h - R_h \dif\star \dif\star\right)\omega &= \dif_{\,h}\left(\star_h \dif_{\,h}\star_h R_h - R_h \star \dif\star\right)\omega \\
&= \dif_{\,h}\left(\star_h \dif_{\,h}\star_h R_h-\star_h R_h \dif \star +\star_h R_h \dif \star-R_h \star \dif\star\right) \omega \\
&= \dif_{\,h}\left(\star_hR_h-R_h\star\right)\dif\star \omega +\dif_{\,h}\left(\star_h \dif_{\,h} \star_h R_h - \star_h R_h \dif\star\right)\omega.
\end{align*}
The desired equality then follows from the identity
\begin{equation*}
\left(\star_h \dif_{\,h} \star_h R_h - \star_h R_h \dif\star\right)\omega = \left(\star_h \dif_{\,h} \star_h R_h - \star_h \dif_{\,h} R_h\star\right)\omega = \star_h \dif_{\,h} \left(\star_h R_h -R_h \star\right)\omega ,
\end{equation*}
which completes the proof.
\end{proof}

Given a $n$-simplex $\sigma$ and a $k$-dimensional face $\tau\prec\sigma$, the subspaces $P(\tau)$ and $P(\dual\tau)$ of $\mathbb{R}^n$ are perpendicular and satisfy $P(\tau)\times P(\dual\tau)=P(\sigma)$. Since the the discrete hodge star maps $k$-cochains on $K_h$ to $(n-k)$-cochains on the dual mesh $\dual K_h$, it captures the geometric gist of its classical Hodge operator $\star$, which acts on wedge products of orthonormal covectors as $\star (\dif\lambda_{\rho(1)}\wedge \dif\lambda_{\rho(2)}\wedge...\wedge \dif\lambda_{\rho(k)}) =\sgn(\rho)\,\dif\lambda_{\rho(k+1)}\wedge \dif\lambda_{\rho(k+2)}\wedge...\wedge \dif\lambda_{\rho(n)}$.

We hope to exploit this correspondence to estimate the consistency of $\star_h$. By definition, the deRham map directly relates the integral of a differential $k$-form $\omega$ over a $k$-dimensional simplex $\tau$ of $K_h$ with the value of its discrete representative at $\tau$. But one can also integrate any differential $(n-k)$-form, in particular $\star\omega$, over $\dual\tau$. This suggests that simple approximation arguments could be used to compare $\langle R_h\star\omega,\dual\tau\rangle$ with $\langle\star_h\omega_h,\dual\tau\rangle$, since the latter is defined as a scaling of the aforesaid integral $\omega_h=R_h\omega$ by the area factor $\vert\dual \tau\vert/\vert \tau\vert$, which is also naturally related to those very integrals under consideration.

The technique we use is best illustrated in low dimensions. Let $p=(p_1,p_2)$ be a vertex of a triangulation $K_h$ embedded in $\mathbb{R}^2$. For a function $f$ on $\mathbb{R}^2$, we have $\hodge f=f\dif x\wedge \dif y$. On the one hand, $\langle R_h f, p \rangle$ is simply the evaluation of $f$ at $p$. One the other hand, if $f$ is differentiable, then a Taylor expansion around $p$ yields
\begin{align*}
\langle R_h\star f,\dual p \rangle &= \iint_{\dual p} f \dif A\\
&= \iint_{\dual p} f(p) + ((x,y)-(p_1,p_2))^TDf(p)+O(h^2)\dif A\\
&= \vert\dual p\vert f(p) + O(h^3).
\end{align*}
Relying on our convention that $\vert p\vert=1$, we conclude that
\begin{equation}\label{eq: consistency prototype estimate}
\langle R_h\star f,\dual p \rangle - \langle \star_hR_h f, \dual p \rangle = \langle R_h\star f,\dual p \rangle - \vert\dual p\vert f(p) = O(h^3).
\end{equation}

This argument is used in the next theorem as a prototype which we generalize in order to derive estimates for differential forms of higher order in arbitrary dimensions.

\begin{theorem}\label{thm:consistency estimate}
	Let $\sigma$ be a $n$-simplex, and suppose $\tau\prec\sigma$ is $k$-dimensional. Then
	\begin{equation*}
	\langle \hodge_h R_h \omega , \dual\tau \rangle = \langle R_h\hodge\omega,\dual\tau\rangle+ O\left(h^{n+1}/(\gamma_{\tau})^k\right),\,\,\,\,\,\omega\in C^1\Lambda^k(\sigma).
	\end{equation*}
\end{theorem}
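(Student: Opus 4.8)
The plan is to unwind both pairings into integrals and reduce the whole statement to comparing two averages of a single scalar function. By the definitions of $\hodge_h$ and $R_h$,
\[
\langle \hodge_h R_h\omega,\dual\tau\rangle=\frac{\vert\dual\tau\vert}{\vert\tau\vert}\int_\tau\omega,
\qquad
\langle R_h\hodge\omega,\dual\tau\rangle=\int_{\dual\tau}\hodge\omega .
\]
Since $\omega\in C^1\Lambda^k(\sigma)$, I read $\dual\tau$ here as its portion inside the fixed $\sigma$, i.e.\ the dual simplices $[c(\tau),\dots,c(\sigma)]$ through this $\sigma$; each such piece has $c(\tau)$ as a vertex, is star-shaped about $c(\tau)$, and lies in $\sigma$ by well-centeredness (all circumcenters $c(\tau),\dots,c(\sigma)$ are interior to $\sigma$). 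The estimate for the full dual cell then follows by summing over the boundedly many $n$-simplices containing $\tau$, via Lemma~\ref{lem: bounded number of simplices}.

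First I would fix an orthonormal frame $e_1,\dots,e_n$ adapted to the orthogonal splitting $P(\tau)\times P(\dual\tau)=P(\sigma)$, with $e_1,\dots,e_k$ spanning the direction of $P(\tau)$ and $e_{k+1},\dots,e_n$ that of $P(\dual\tau)$, and write $\omega=\sum_I\omega_I\,\dif x_I$. Because $x_{k+1},\dots,x_n$ are constant on $\tau$, only the coefficient $g:=\omega_{1\cdots k}$ survives restriction to $\tau$, so that $\int_\tau\omega=\int_\tau g\,\mathrm{d}\mathcal H^k$ up to the orientation sign of $\tau$; dually, only the $\dif x_{k+1}\wedge\cdots\wedge \dif x_n$ component of $\hodge\omega$ survives on $\dual\tau$, and that component equals $g$ up to sign. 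The crucial bookkeeping step is to verify, using the orientation conventions of Section~\ref{sec: Combinatorial Orientation and Duality} (precisely the relation $P(\tau)\times P(\dual\tau)\sim\sigma$ that pins down the dual orientation), that all these signs combine to $+1$, so that \emph{both} sides are integrals of the \emph{same} scalar $g$: $\int_\tau\omega=\int_\tau g\,\mathrm d\mathcal H^k$ and $\int_{\dual\tau}\hodge\omega=\int_{\dual\tau} g\,\mathrm d\mathcal H^{n-k}$.

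With both sides expressed through $g$, the leading constant terms cancel and only a difference of averages remains:
\[
\langle \hodge_h R_h\omega,\dual\tau\rangle-\langle R_h\hodge\omega,\dual\tau\rangle
=\vert\dual\tau\vert\left(\operatorname{avg}_\tau g-\operatorname{avg}_{\dual\tau} g\right).
\]
Here I would use that $c(\tau)\in\tau$ and $c(\tau)$ is a vertex of $\dual\tau$, while every point of $\tau$ and of the $\sigma$-portion of $\dual\tau$ lies within $\diam(\sigma)\le h$ of $c(\tau)$. Since $g\in C^1(\sigma)$, the mean value inequality bounds $\vert\operatorname{avg}_\tau g-g(c(\tau))\vert$ and $\vert\operatorname{avg}_{\dual\tau} g-g(c(\tau))\vert$ each by $C\Vert\omega\Vert_{C^1(\sigma)}\,h$, so the bracket is $O(h)$. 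Finally, $\dual\tau$ is a union of at most $(n-k)!$ flat $(n-k)$-simplices of diameter $\le h$, whence $\vert\dual\tau\vert\le C_n\,h^{n-k}$, and the difference is $O(h^{n-k+1})$. As $\gamma_\tau\le\diam(\tau)\le h$, this is in particular $O\!\left(h^{n+1}/\gamma_\tau^{\,k}\right)$, which is the claimed bound.

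The main obstacle I anticipate is the sign/orientation alignment in the reduction to a common $g$: it is exactly there that the careful treatment of the circumcentric duality orientation is indispensable, since it is what forces the two $O(1)$ contributions to cancel rather than add. Once the signs are settled, the rest is a routine Taylor/mean value argument whose only quantitative inputs are the diameter and volume bounds on $\dual\tau$ and the well-centeredness fact that $c(\tau)$ is a common point of $\tau$ and $\dual\tau$; note that only $C^1$ regularity of $\omega$ is needed, because the extra powers of $h$ come from the volume factor $\vert\dual\tau\vert$ rather than from a second-order expansion.
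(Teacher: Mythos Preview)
Your proof is correct and follows essentially the same approach as the paper's: both reduce the two pairings to integrals of the single scalar coefficient $g=\omega_{1\cdots k}$ via an orthonormal frame adapted to the orthogonal splitting $P(\tau)\perp P(\dual\tau)$, and then compare by Taylor/mean-value expansion about the common point $c(\tau)$. Your difference-of-averages formulation $\vert\dual\tau\vert\,(\operatorname{avg}_\tau g-\operatorname{avg}_{\dual\tau}g)$ is in fact slightly cleaner than the paper's and yields $O(h^{n-k+1})$ directly, a bound at least as sharp as the stated $O(h^{n+1}/\gamma_\tau^k)$, which the paper arrives at by dividing an $O(h^{k+1})$ Taylor remainder by $\vert\tau\vert\gtrsim\gamma_\tau^k$.
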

\begin{proof}
	Denote by $\Sigma(k,n)$ the set of strictly increasing maps $\{1,...,k\}\longrightarrow\{1,...,n\}$. Since $P(\tau)\perp P(\dual\tau)$ and $P(\tau)\times P(\dual\tau)=\mathbb{R}^n$, we can choose orthonormal vectors $t_i=x_i-c(\tau)$, $x_i\in \mathbb{R}^n$, such that $\{t_i\}_{i=1,...,k}$ and $\{t_i\}_{i=k+1,..,n}$ are bases for $P(\tau)$ and $P(\dual\tau)$ respectively. Let $\dif\lambda_1,...,\dif\lambda_n$ be a basis for $\text{Alt}^n\mathbb{R}^n$ dual to $\{t_i\}_{i=1,...,n}$. We write $\vol_{\tau}$ for $\dif\lambda_1\wedge...\wedge\dif\lambda_k$ and $\vol_{\dual\tau}$ for $\dif\lambda_{k+1}\wedge...\wedge\dif\lambda_n$.

	Following the argument which led to (\ref{eq: consistency prototype estimate}), we approximate the integral $\langle R_h\omega,\tau\rangle$ of a differential $k$-form $\omega=\sum_{\rho\in\Sigma(k,n)}f_{\rho}(\dif\lambda)_\rho$ over $\tau$ using Taylor expansion. This yields
	\begin{equation*}
	\int_{\tau}\sum_{\rho\in\Sigma(k,n)}f_{\rho}(\dif\lambda)_\rho
	=\int_{\tau} f_{1,...,k}\vol_{\tau}
	=\vert \tau \vert f_{1,...,k}\left(c(\tau)\right)+O(h^{k+1}).
	\end{equation*}
	We find similarly that
	\begin{equation*}
	\langle R_h\hodge \omega, \dual\tau \rangle
	= \int_{\dual \tau}\sum_{\rho\in\sum(k,n)}f_{\rho}\hodge(\dif\lambda)_{\rho}
	=\int_{\dual\tau} f_{1,...,k}\vol_{\dual\tau}
	=\vert\dual\tau\vert f_{1,...,k}\left(c(\tau)\right)+O(h^{n-k+1}).
	\end{equation*}
	Combining these two equations, we get
	\begin{align*}
	\langle R_h\hodge \omega, \dual\tau \rangle &=\frac{\vert\dual\tau\vert}{\vert\tau\vert}\langle R_h\omega,\tau\rangle +O\left(h^{n+1}/(\gamma_{\tau})^k\right)+O(h^{n-k+1})\\
	&=\langle\hodge_h R_h\omega,\dual\tau\rangle+O\left(h^{n+1}/(\gamma_{\tau})^k\right) ,
	\end{align*}
which completes the proof.
\end{proof}

\begin{corollary}\label{cor: consistency of hodge star on dual}
	Integrating on the dual mesh, we obtain
	\begin{equation*}
		\langle\star_h R_h (\star\omega),\tau\rangle = \langle R_h\star(\star\omega), \tau\rangle + O(h^{k+1}),\,\,\,\,\, \omega\in C^1\Lambda^k(\mathcal{P}),
	\end{equation*}
	when $K_h$ is regular.
\end{corollary}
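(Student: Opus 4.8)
The plan is to reduce the statement to the Taylor-expansion estimates already carried out in the proof of Theorem~\ref{thm:consistency estimate}, but performed on the dual cell $\dual\tau$ and read off on the primal simplex $\tau$. First I would move $\star_h$ off the dual cochain $R_h(\star\omega)\in C^{n-k}(\dual K_h)$ by using the relation $\star_h\star_h=(-1)^{k(n-k)}$ imposed on the discrete Hodge star. Writing an arbitrary dual cochain as $\star_h$ of a primal one and combining $\langle\star_h\alpha_h,\dual\sigma\rangle=\frac{\vert\dual\sigma\vert}{\vert\sigma\vert}\langle\alpha_h,\sigma\rangle$ with $\dual\dual\sigma=(-1)^{k(n-k)}\sigma$ yields the dual-to-primal rule
\begin{equation*}
\langle\star_h\beta_h,\tau\rangle=(-1)^{k(n-k)}\frac{\vert\tau\vert}{\vert\dual\tau\vert}\langle\beta_h,\dual\tau\rangle,\qquad\beta_h\in C^{n-k}(\dual K_h).
\end{equation*}
Taking $\beta_h=R_h(\star\omega)$ turns the left-hand side of the corollary into $(-1)^{k(n-k)}\frac{\vert\tau\vert}{\vert\dual\tau\vert}\int_{\dual\tau}\star\omega$.

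Next I would expand that integral exactly as in the proof of the theorem. Using the orthonormal frame adapted to the splitting $P(\tau)\times P(\dual\tau)=\mathbb{R}^n$ and writing $\omega=\sum_{\rho}f_{\rho}(\dif\lambda)_{\rho}$, only the component $f_{1,\dots,k}\vol_{\dual\tau}$ of $\star\omega$ survives integration over $\dual\tau$, so a first-order Taylor expansion of $f_{1,\dots,k}$ about $c(\tau)$ (this is all the regularity $\omega\in C^1\Lambda^k$ affords) gives
\begin{equation*}
\int_{\dual\tau}\star\omega=\vert\dual\tau\vert\,f_{1,\dots,k}(c(\tau))+E,\qquad\vert E\vert\le\|Df_{1,\dots,k}\|_{\infty}\,\diam(\dual\tau)\,\vert\dual\tau\vert.
\end{equation*}
Multiplying by $\vert\tau\vert/\vert\dual\tau\vert$ cancels the factor $\vert\dual\tau\vert$: the leading term becomes $(-1)^{k(n-k)}\vert\tau\vert f_{1,\dots,k}(c(\tau))$ and the remainder becomes $O(\diam(\dual\tau)\vert\tau\vert)$. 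On the other side, since $\star\star\omega=(-1)^{k(n-k)}\omega$, the same expansion of $\int_{\tau}\omega=\int_{\tau}f_{1,\dots,k}\vol_{\tau}$ gives $\langle R_h\star\star\omega,\tau\rangle=(-1)^{k(n-k)}\vert\tau\vert f_{1,\dots,k}(c(\tau))+O(\diam(\tau)\vert\tau\vert)$. Subtracting, the leading terms cancel, and the corollary follows once both remainders are shown to be $O(h^{k+1})$.

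The bound $O(\diam(\tau)\vert\tau\vert)=O(h^{k+1})$ is immediate from $\diam(\tau)\le h$ and $\vert\tau\vert\le C_n h^{k}$. The only delicate point is the term $O(\diam(\dual\tau)\vert\tau\vert)$: here I would invoke $\dual\tau\subset\overline{\mathrm{St}}(\tau)$ together with Lemma~\ref{lem: bounded number of simplices} and the edge bound to conclude $\diam(\dual\tau)=O(h)$, whence $O(\diam(\dual\tau)\vert\tau\vert)=O(h\cdot h^{k})=O(h^{k+1})$. I expect this cancellation to be the crux of the argument. It is essential to keep the dual-cell remainder in the sharp form $\diam(\dual\tau)\vert\dual\tau\vert$, so that $\vert\dual\tau\vert$ cancels against the $\vert\dual\tau\vert^{-1}$ produced by the Hodge-star rule. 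Feeding in instead the lumped estimate $O(h^{n+1}/\gamma_{\tau}^{k})$ of Theorem~\ref{thm:consistency estimate} would force one to bound $\vert\tau\vert/\vert\dual\tau\vert$ from above, hence to assume a lower bound $\vert\dual\tau\vert\gtrsim h^{n-k}$ that shape regularity alone does not supply; steering around this is exactly what makes regularity, through the bounded-valence lemma, sufficient.
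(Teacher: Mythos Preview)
Your argument is correct and is essentially the paper's own proof: both pass from $\langle\star_h R_h(\star\omega),\tau\rangle$ to $(-1)^{k(n-k)}\frac{|\tau|}{|\dual\tau|}\int_{\dual\tau}\star\omega$ via the dual Hodge-star rule, then feed in the two Taylor expansions from Theorem~\ref{thm:consistency estimate} to match against $(-1)^{k(n-k)}\int_\tau\omega$. Your insistence on keeping the dual-cell remainder in the sharp form $O(\diam(\dual\tau)\,|\dual\tau|)$, so that $|\dual\tau|$ cancels against the factor $|\tau|/|\dual\tau|$, is exactly right; the paper's displayed step $\frac{|\tau|}{|\dual\tau|}\,O(h^{n-k+1})=O(h^{k+1})$ conceals precisely this cancellation, and your reading is the correct way to justify it without a lower bound on $|\dual\tau|$. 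One small remark: $\diam(\dual\tau)\le 2h$ already follows from $\dual\tau\subset\overline{\mathrm{St}}(\tau)$ and $\diam(\sigma)\le h$ for each $n$-simplex $\sigma\succ\tau$, so Lemma~\ref{lem: bounded number of simplices} is not actually needed at that point.
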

\begin{proof}
	Using Theorem \ref{thm:consistency estimate}, we find that
	\begin{align*}
		\langle\star_h R_h (\star\omega),\tau\rangle &= (-1)^{k(n-k)}\langle\star_h R_h (\star\omega),\dual\dual\tau\rangle\\
		&= (-1)^{k(n-k)}\left(\frac{\vert\dual\tau\vert}{\vert\tau\vert}\right)^{-1}\langle R_h (\star\omega),\dual\tau\rangle\\
		&= (-1)^{k(n-k)}\left(\frac{\vert\dual\tau\vert}{\vert\tau\vert}\right)^{-1}\left(\frac{\vert\dual\tau\vert}{\vert\tau\vert}\langle R_h\omega,\tau\rangle +O(h^{n-k+1})\right)\\
		&= \langle R_h \star \left(\star\omega\right),\tau\rangle + O(h^{k+1}).
	\end{align*}
\end{proof}

\begin{corollary}\label{cor: consistency of hodge star}
The estimate
\begin{equation*}
\| \hodge_h R_h \omega - R_h \hodge \omega \|_\infty = O(h^{n-k+1}),\,\,\,\,\, \omega\in C^1\Lambda^k(\mathcal{P})
\end{equation*}
holds when $K_h$ is regular.
\end{corollary}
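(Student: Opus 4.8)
The plan is to reduce the uniform estimate to a per-cell bound and then to prove that bound by isolating the oscillation of $\omega$ over cells of size $O(h)$, exploiting the fact that the discrete Hodge star reproduces constant-coefficient forms exactly. Since $\hodge_h R_h\omega-R_h\hodge\omega$ is an $(n-k)$-cochain on $\dual K_h$, we have $\|\hodge_h R_h\omega-R_h\hodge\omega\|_\infty=\max_{\tau\in\Delta_k(K_h)}|\langle(\hodge_h R_h-R_h\hodge)\omega,\dual\tau\rangle|$, so it suffices to bound $E_\tau:=\langle\hodge_h R_h\omega,\dual\tau\rangle-\langle R_h\hodge\omega,\dual\tau\rangle$ by $Ch^{n-k+1}$ with $C$ independent of $\tau$ and $h$. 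Fixing $\tau\prec\sigma$ and adopting the orthonormal frame $t_1,\dots,t_n$ of Theorem \ref{thm:consistency estimate}, I would write $\omega=\sum_\rho f_\rho(\dif\lambda)_\rho$ and retain only the coefficient $f:=f_{1,\dots,k}$, which is the one that survives integration against $\vol_\tau$ over $\tau$ and against $\vol_{\dual\tau}$ over $\dual\tau$.

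Next I would freeze $\omega$ at the circumcenter $c(\tau)$, which by well-centeredness is a common vertex of $\tau$ and of $\dual\tau$. Writing $\bar\omega$ for the constant-coefficient form obtained by replacing $f$ with $f(c(\tau))$, the definition of $\hodge_h$ gives $\langle\hodge_h R_h\bar\omega,\dual\tau\rangle=\frac{|\dual\tau|}{|\tau|}\int_\tau\bar\omega=|\dual\tau|\,f(c(\tau))=\int_{\dual\tau}\hodge\bar\omega=\langle R_h\hodge\bar\omega,\dual\tau\rangle$, so the error annihilates $\bar\omega$ and $E_\tau=\langle(\hodge_h R_h-R_h\hodge)(\omega-\bar\omega),\dual\tau\rangle$. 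Since $\omega\in C^1\Lambda^k(\mathcal{P})$ and $\mathcal{P}$ is compact, $f$ is Lipschitz with a constant $L$ depending only on $\omega$, whence $|f(x)-f(c(\tau))|\le L\,|x-c(\tau)|$. The two contributions are then controlled by this oscillation: $|\langle\hodge_h R_h(\omega-\bar\omega),\dual\tau\rangle|=\frac{|\dual\tau|}{|\tau|}\left|\int_\tau(f-f(c(\tau)))\vol_\tau\right|\le|\dual\tau|\,L\,\diam(\tau)$, where the volume factor $|\tau|$ cancels, and likewise $|\langle R_h\hodge(\omega-\bar\omega),\dual\tau\rangle|\le|\dual\tau|\,L\,\diam(\dual\tau)$.

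It remains to supply the two geometric inputs, both uniform in $\tau$. First, $\diam(\tau)\le h$ since $h$ is the longest edge. Second, $\diam(\dual\tau)=O(h)$: every vertex $c(\eta)$ of $\dual\tau$ (with $\tau\preceq\eta$) lies, by well-centeredness, inside an $n$-simplex of $\overline{\mathrm{St}}(\tau)$, and any two points of the carrier $\cup_{\sigma\succ\tau}\sigma$ lie in simplices sharing $\tau$, hence are within $2h$ of one another; shape regularity and Lemma \ref{lem: bounded number of simplices} keep the relevant constants independent of $\tau$ and $h$. From $\diam(\dual\tau)=O(h)$ and the fact that $\dual\tau$ is an $(n-k)$-dimensional polytope contained in a ball of that radius, we get $|\dual\tau|=O(h^{n-k})$. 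Combining, $|E_\tau|\le|\dual\tau|\,L\,(\diam(\tau)+\diam(\dual\tau))=O(h^{n-k}\cdot h)=O(h^{n-k+1})$ uniformly, which is the claim.

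The main obstacle is exactly the step forcing the factor $|\tau|$ to cancel. The per-simplex estimate of Theorem \ref{thm:consistency estimate} carries a factor $h^{n+1}/(\gamma_{\tau})^k$, which on a merely shape-regular (as opposed to quasi-uniform) family is \emph{not} $O(h^{n-k+1})$, since $\gamma_{\tau}$ can be far smaller than $h$; estimating the two integrals separately and then dividing by $|\tau|\gtrsim(\gamma_{\tau})^k$ loses precisely this cancellation and manufactures a spurious negative power of $\gamma_{\tau}$. The content of the corollary is therefore to observe that the error sees only the oscillation of $\omega$ across cells of diameter $O(h)$, and that freezing the coefficient at $c(\tau)$ reduces $\int_\tau(\omega-\bar\omega)$ to a quantity of size $|\tau|\,L\,\diam(\tau)$, so that the $|\tau|$ in the scaling factor $|\dual\tau|/|\tau|$ disappears and the bound stays at $O(h^{n-k+1})$ without any quasi-uniformity hypothesis.
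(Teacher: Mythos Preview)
Your argument is correct and follows the same underlying idea as the paper --- Taylor expansion of the coefficient $f_{1,\dots,k}$ about $c(\tau)$ --- but your organization is sharper than what the paper records. The paper states Corollary~\ref{cor: consistency of hodge star} without proof, evidently intending it as an immediate consequence of Theorem~\ref{thm:consistency estimate} together with regularity. As you correctly observe, the per-cell bound $O(h^{n+1}/(\gamma_\tau)^k)$ in Theorem~\ref{thm:consistency estimate} does not directly yield $O(h^{n-k+1})$ under shape regularity alone, since $\gamma_\tau$ need not be comparable to $h$. By freezing $\omega$ at $c(\tau)$ \emph{before} dividing by $|\tau|$, you make the cancellation $\frac{|\dual\tau|}{|\tau|}\cdot|\tau|=|\dual\tau|$ explicit and obtain the corollary without any quasi-uniformity hypothesis. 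In effect, your proof is what one gets by rerunning the argument of Theorem~\ref{thm:consistency estimate} with the Taylor remainder bounded by $\mathrm{diam}(\tau)$ rather than $h$; the paper's $O(h^{k+1})$ remainder is valid but wasteful, and replacing it by $O((\mathrm{diam}\,\tau)^{k+1})$ would recover your estimate through the same route. One minor point to make explicit: the Lipschitz constant $L$ is for the coefficient $f_{1,\dots,k}$ in a frame that varies with $\tau$, but since the frames are orthonormal this constant is controlled uniformly by $\|\omega\|_{C^1(\mathcal{P})}$, so the bound is indeed independent of $\tau$ as you claim.
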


\begin{corollary}\label{cor: consistency of hodge star L2}
	The estimate
	\begin{equation*}
	\| \hodge_h R_h \omega - R_h \hodge \omega \|_h= O(h),\,\,\,\,\, \omega\in C^1\Lambda^k(\mathcal{P})
	\end{equation*}
	holds when $K_h$ is regular.
\end{corollary}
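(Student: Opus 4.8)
The plan is to expand the squared discrete $L^2$-norm over the $k$-simplices, insert the cell-by-cell estimate of Theorem~\ref{thm:consistency estimate}, and control the resulting sum by $\vol(\mathcal P)$ using shape-regularity. Write $\eta_h=\hodge_h R_h\omega-R_h\hodge\omega\in C^{n-k}(\dual K_h)$. By the definition of the discrete $L^2$-norm on the dual mesh,
\begin{equation*}
\|\eta_h\|_h^2=\sum_{\tau\in\Delta_k(K_h)}\frac{|\tau|}{|\dual\tau|}\,\langle\eta_h,\dual\tau\rangle^2 .
\end{equation*}
For each $\tau$ let $h_\tau$ denote the diameter of the $n$-simplices $\sigma\succ\tau$, all comparable up to $C_{\mathrm{reg}}$. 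By Lemma~\ref{lem: bounded number of simplices} the closed star $\overline{\mathrm{St}}(\tau)$ contains at most $C_{\#}$ such simplices, so $\diam\overline{\mathrm{St}}(\tau)\le C h_\tau$; since every circumcenter occurring in $\dual\tau$ lies in $\overline{\mathrm{St}}(\tau)$ by well-centeredness, also $\diam(\dual\tau)\le Ch_\tau$.

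The decisive step is to read Theorem~\ref{thm:consistency estimate} locally. Returning to its proof, $\langle\eta_h,\dual\tau\rangle$ is the sum of the scaled primal remainder $\tfrac{|\dual\tau|}{|\tau|}\big(\int_\tau\omega-|\tau|f_{1,\dots,k}(c(\tau))\big)$ and the dual remainder $\int_{\dual\tau}\hodge\omega-|\dual\tau|f_{1,\dots,k}(c(\tau))$. As $\omega\in C^1\Lambda^k$, each is $O(|\dual\tau|\,h_\tau)$, whence
\begin{equation*}
|\langle\eta_h,\dual\tau\rangle|\le C\,|\dual\tau|\,h_\tau .
\end{equation*}
This is the crucial refinement: the error carries a factor $|\dual\tau|$ that cancels the weight $1/|\dual\tau|$ in the norm. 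Substituting and using $h_\tau\le h$ gives, for each cell,
\begin{equation*}
\frac{|\tau|}{|\dual\tau|}\,\langle\eta_h,\dual\tau\rangle^2\le C\,|\tau|\,|\dual\tau|\,h_\tau^2\le C\,h^2\,|\tau|\,|\dual\tau| .
\end{equation*}

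It then remains to sum. Shape-regularity yields $|\tau|\le C\diam(\tau)^k\le Ch_\tau^k$ and $|\dual\tau|\le C\diam(\dual\tau)^{n-k}\le Ch_\tau^{n-k}$, while $|\sigma|\ge c\,\gamma_\sigma^n\ge c'h_\tau^n$ for any $n$-simplex $\sigma\succ\tau$; hence $|\tau|\,|\dual\tau|\le Ch_\tau^n\le C|\sigma|$. Picking one such $\sigma=\sigma(\tau)$ per $\tau$ and noting that at most $\binom{n+1}{k+1}$ distinct $k$-faces are assigned to a given $n$-simplex, I would conclude
\begin{equation*}
\|\eta_h\|_h^2\le Ch^2\sum_{\tau\in\Delta_k(K_h)}|\sigma(\tau)|\le C\binom{n+1}{k+1}h^2\sum_{\sigma\in\Delta_n(K_h)}|\sigma|=C'h^2\vol(\mathcal P),
\end{equation*}
so that $\|\eta_h\|_h=O(h)$.

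I expect the main obstacle to be exactly the global-versus-local mesh-size distinction. Inserting the estimate of Theorem~\ref{thm:consistency estimate} with the global parameter $h$ (as literally written, $O(h^{n+1}/\gamma_\tau^k)$) is too crude on families that are not quasi-uniform: the weights $1/|\dual\tau|$ are then not summable and the bound fails to close; routing the proof through the maximum-norm estimate of Corollary~\ref{cor: consistency of hodge star} fails for the same reason, as it discards the local dependence of the error. The resolution, and the heart of the argument, is that the consistency error genuinely scales with the local dual volume $|\dual\tau|$ and the local size $h_\tau$, so that the troublesome weights cancel and only a weighted sum of local cell volumes $|\tau||\dual\tau|\lesssim|\sigma|$ survives, which telescopes into $\vol(\mathcal P)$ via shape-regularity and the bounded-valence Lemma~\ref{lem: bounded number of simplices}.
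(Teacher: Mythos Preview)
Your argument is correct, but it is organized differently from the paper's. The paper simply inserts the pointwise bound from Theorem~\ref{thm:consistency estimate}, $|\langle\eta_h,\dual\tau\rangle|\lesssim h^{n-k+1}$, uses regularity to estimate the weight $|\tau|/|\dual\tau|\lesssim h^{2k-n}$, and then closes the sum by the crude count $\#\Delta_k(K_h)\lesssim h^{-n}$; three lines and done. You instead re-open the proof of Theorem~\ref{thm:consistency estimate} to extract the sharper local form $|\langle\eta_h,\dual\tau\rangle|\lesssim |\dual\tau|\,h_\tau$, let the $|\dual\tau|$ cancel the weight, and sum the leftover $|\tau|\,|\dual\tau|\lesssim|\sigma|$ into $\vol(\mathcal P)$ via the bounded-valence Lemma~\ref{lem: bounded number of simplices}. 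The payoff of your route is exactly what you flag at the end: the paper's simplex count $\#\Delta_k(K_h)\lesssim h^{-n}$ tacitly uses quasi-uniformity (all cells comparable to the global $h$), which is not part of the stated shape-regularity hypothesis, whereas your volume-based summation goes through under shape-regularity alone. The cost is that you must revisit the Taylor remainders rather than quote Theorem~\ref{thm:consistency estimate} as a black box; in the quasi-uniform regime the paper's argument is shorter and equivalent.
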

\begin{proof}
Using Theorem \ref{thm:consistency estimate}, we estimate directly
\begin{align*}
\| \hodge_h R_h \omega - R_h \hodge \omega \|_h^2 &= \sum_{\tau\in\Delta_k(K_h)}\left(\frac{\dual\tau}{\tau}\right)^{-1}\langle \hodge_h R_h \omega - R_h \hodge \omega,\dual\tau\rangle^2\\
&\leq C\sum_{\tau\in\Delta_k(K_h)}\left(\frac{\dual\tau}{\tau}\right)^{-1}\left( h^{n-k+1}\right)^2\\
& \lesssim \sum_{\tau\in\Delta_k(K_h)} h^{n+2}\\
&\lesssim h^2,
\end{align*}
where the last inequality is obtained from the fact that $\#\{\tau:\tau\in\Delta_k(K_h)\}\sim h^n$, which is a consequence of the regularity assumption.
\end{proof}

Finally, the following Corollary is of special importance. It's proof is similar to the one used for Corollary \ref{cor: consistency of hodge star L2}.

\begin{corollary} \label{cor: consistency of hodge star L2 on the dual}
		The estimate
	\begin{equation*}
	\| \left(\hodge_h R_h - R_h \hodge\right) \left(\star\omega\right) \|_h= O(h),\,\,\,\,\, \omega\in C^1\Lambda^k(\mathcal{P})
	\end{equation*}
	holds when $K_h$ is regular.
\end{corollary}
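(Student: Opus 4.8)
The plan is to obtain the result by squaring and summing the pointwise estimate already furnished by Corollary~\ref{cor: consistency of hodge star on dual}, in exact parallel with the proof of Corollary~\ref{cor: consistency of hodge star L2}. Before anything, I would fix the space in which the cochain $(\hodge_h R_h-R_h\hodge)(\star\omega)$ lives. Since $\star\omega$ is an $(n-k)$-form, $R_h(\star\omega)$ is the $(n-k)$-cochain on $\dual K_h$ obtained by integrating $\star\omega$ over the dual cells $\dual\tau$, and $\hodge_h$ carries it back to a $k$-cochain on the primal complex $K_h$; similarly $R_h\hodge(\star\omega)=(-1)^{k(n-k)}R_h\omega\in C^k(K_h)$. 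Thus $(\hodge_h R_h-R_h\hodge)(\star\omega)$ is a \emph{primal} $k$-cochain, and the pertinent discrete $L^2$-norm is the one on $C^k(K_h)$, which carries the weight $\vert\dual\tau\vert/\vert\tau\vert$ rather than its reciprocal. This weighting is the only structural difference from the situation in Corollary~\ref{cor: consistency of hodge star L2}.

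With the space identified, I would invoke Corollary~\ref{cor: consistency of hodge star on dual} to obtain, uniformly over all $k$-simplices $\tau$,
\[
\langle(\hodge_h R_h-R_h\hodge)(\star\omega),\tau\rangle=O(h^{k+1}),
\]
the implied constant depending only on $\omega$ and the regularity constant $C_{\mathrm{reg}}$. Expanding the norm and substituting this bound yields
\[
\|(\hodge_h R_h-R_h\hodge)(\star\omega)\|_h^2=\sum_{\tau\in\Delta_k(K_h)}\frac{\vert\dual\tau\vert}{\vert\tau\vert}\langle(\hodge_h R_h-R_h\hodge)(\star\omega),\tau\rangle^2\lesssim\sum_{\tau\in\Delta_k(K_h)}\frac{\vert\dual\tau\vert}{\vert\tau\vert}\,h^{2k+2}.
\]

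What remains is purely geometric and mirrors the counting in the proof of Corollary~\ref{cor: consistency of hodge star L2}. By regularity the volumes scale as $\vert\tau\vert\sim h^k$ and $\vert\dual\tau\vert\sim h^{n-k}$ (the latter because $\dual\tau\subset\overline{\mathrm{St}}(\tau)$ has diameter $\lesssim h$), so the weight obeys $\vert\dual\tau\vert/\vert\tau\vert\lesssim h^{n-2k}$ and each summand is $\lesssim h^{n+2}$. Since $\#\Delta_k(K_h)\sim h^{-n}$ as a consequence of the regularity assumption (cf.\ Lemma~\ref{lem: bounded number of simplices}), the sum collapses to $O(h^2)$, and taking square roots gives the claimed $O(h)$ estimate.

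I do not expect a genuine obstacle; the argument is a faithful transcription of Corollary~\ref{cor: consistency of hodge star L2}, and all the analytic content is already packaged in Corollary~\ref{cor: consistency of hodge star on dual}. The only points requiring vigilance are bookkeeping: recognizing that the output is a primal cochain so that the weight $\vert\dual\tau\vert/\vert\tau\vert$ (not its inverse) appears, and feeding in the sharper pointwise order $O(h^{k+1})$ rather than the $O(h^{n-k+1})$ order that governs the primal Hodge star. These two changes precisely compensate one another in the weighted sum, which is why the final rate is again $O(h)$.
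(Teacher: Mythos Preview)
Your proposal is correct and follows essentially the same approach as the paper: both expand the primal discrete $L^2$-norm with weight $\vert\dual\tau\vert/\vert\tau\vert$, insert the pointwise bound $O(h^{k+1})$ from Corollary~\ref{cor: consistency of hodge star on dual}, use the scaling $\vert\dual\tau\vert/\vert\tau\vert\lesssim h^{n-2k}$, and conclude via the simplex count $\#\Delta_k(K_h)\sim h^{-n}$. Your write-up is in fact slightly more explicit than the paper's (which has a minor typographical slip, writing $h^{k+1}$ where $(h^{k+1})^2$ is intended).
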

\begin{proof}
	\begin{align*}
	\| \left(\hodge_h R_h \omega - R_h \hodge\right) \left(\star\omega\right) \|^2_h &=  \sum_{\tau\in\Delta_k(K_h)}\langle\left(\hodge_h R_h \omega - R_h \hodge\right) \left(\star\omega\right),\tau\rangle \langle \star_h\left(\hodge_h R_h \omega - R_h \hodge\right) \left(\star\omega\right),\dual\tau\rangle\\
	&=\sum_{\tau\in\Delta_k(K_h)}\frac{\vert\dual\tau\vert}{\vert\tau\vert}\langle\left(\hodge_h R_h \omega - R_h \hodge\right) \left(\star\omega\right),\tau\rangle^2\\
	&\lesssim \sum_{\tau\in\Delta_k(K_h)} h^{n-2k}\left(h^{k+1}\right)\\
	&\lesssim h^2.
	\end{align*}
\end{proof}

 As claimed, we see that estimates on the norm of the operators defined in Section \ref{subs: Discrete Operators} would finally allow one to use the above corollaries to find related bounds on the consistency expression given in Lemma \ref{lem: reducing consistency}. In particular, since
 \begin{equation}\label{eq: Hodge-Laplace on functions}
 \Delta_h\omega_h=\delta_h\dif_{\,h}\omega_h,\,\,\,\,\, \omega_h\in C^0(K_h),
 \end{equation}
 it is sufficient for the Hodge-Laplace problem on $0$-forms to evaluate the operator norm of the discrete exterior derivative and of the discrete Hodge star over $C^k(\dual K_h)$, $k=1,...,n$.
Namely, from (\ref{eq: Hodge-Laplace on functions}) and the proof of Lemma \ref{lem: reducing consistency}, the consistency error of the Hodge-Laplace restricted to $0$-forms $\omega$ can be written as
\begin{equation*}
\Delta_h R_h\omega - R_h \Delta \omega = \star_h \dif_{\,h}\left(\star_h R_h-R_h \star \right)\dif\omega +\left(\star_h R_h - R_h\star\right)\dif\star \dif\omega.
\end{equation*}
The following estimates mainly rely on Lemma \ref{lem: bounded number of simplices}, and on the regularity assumption imposed on $K_h$.  Indeed, for $\omega_h \in \Delta_k(K_h)$, the inequality
\begin{equation}\label{eq: op maximum norm exterior derivative over dual}
\|\dif_{\,h}\hodge_h\omega_h\|_{\infty}\leq C\|\hodge_h\omega_h\|_\infty,
\end{equation}
is a direct consequence of the former, whereas
\begin{equation}\label{eq: op maximum norm Hodge star over dual}
\|\hodge_h\hodge_h\omega_h\|_{\infty}\sim h^{2k-n}\|\hodge_h\omega_h\|_{\infty}
\end{equation}
is immediately obtained from the latter. While
\begin{equation}\label{eq: op L2 norm Hodge star over dual}
\|\hodge_h\|_{\mathcal{L}\left((C^k(\dual K_h),\|\cdot\|_h),(C^{n-k}(K_h),\|\cdot\|_h)\right)}= 1
\end{equation}
is merely a statement about the Hodge star being an isometry, the last required estimate is derived from
 \begin{align}
 \|\dif_{\,h} \hodge_h\omega_h\|_h^2 & = \sum_{\eta\in\Delta_{k-1}(K_h)}\left(\frac{\vert\dual\eta\vert}{\vert\eta\vert}\right)^{-1}\langle\hodge_h\omega_h,\partial\dual\eta\rangle^2  \nonumber\\
 &=\sum_{\eta\in\Delta_{k-1}(K_h)}\,\,\sum_{\substack{\tau\in\Delta_k(K_h),\\\tau\succ\eta}}\left(\frac{\vert\dual\eta\vert}{\vert\eta\vert}\right)^{-1}\langle\hodge_h\omega_h,\dual\tau\rangle^2  \nonumber\\
 &\leq Ch^{-2}\|\hodge_h\omega_h\|_h. \label{eq: op L2 norm exterior derivative over dual}
 \end{align}
Finally, by combining the estimates (\ref{eq: op maximum norm exterior derivative over dual}), (\ref{eq: op maximum norm Hodge star over dual}), (\ref{eq: op L2 norm Hodge star over dual}) and (\ref{eq: op L2 norm exterior derivative over dual}),
with corollaries \ref{cor: consistency of hodge star on dual}, \ref{cor: consistency of hodge star}, \ref{cor: consistency of hodge star L2} and \ref{cor: consistency of hodge star L2 on the dual},
we infer
\begin{align*}
\|\star_h \dif_{\,h}\left(\star_h R_h-R_h \star \right)\dif\omega\|_h &\leq \|\star_h\|_{\mathcal{L}\left((C^k(\dual K_h),\|\cdot\|_h),(C^{n-k}(K_h),\|\cdot\|_h)\right)}\|\dif_{\,h}\left(\star_h R_h-R_h \star \right)\dif\omega\|_h\\
&\lesssim h^{-1} \|\left(\star_h R_h-R_h \star \right)\dif\omega\|_h\\
&\lesssim h^{-1}h = O(1),
\end{align*}

\begin{align*}
\|\star_h \dif_{\,h}\left(\star_h R_h-R_h \star \right)\dif\omega\|_{\infty} &\lesssim h^{2k-n}\|\dif_{\,h}\left(\star_h R_h-R_h \star \right)\dif\omega\|_{\infty}\\
&\lesssim h^{2k-n} \|\left(\star_h R_h-R_h \star \right)\dif\omega\|_{\infty}\\
& \lesssim h^{2k-n}h^{n-k} = O(h^{k}),
\end{align*}

\begin{equation*}
\|\left(\star_h R_h - R_h\star\right)\dif\star \dif\omega\|_h = O(h),
\end{equation*}
and
\begin{align*}
\|\left(\star_h R_h - R_h\star\right)\dif\star \dif\omega\|_\infty = O(h^{k+1}).
\end{align*}

%\begin{equation*}
%\begin{split}
%\|\star_h \dif_{\,h}\left(\star_h R_h-R_h \star \right)\dif\omega\|_h &\leq  Ch^{-1}h=O(1)\\
%\|\star_h \dif_{\,h}\left(\star_h R_h-R_h \star \right)\dif\omega\|_{\infty} &\leq Ch^{-n} h^{n}=O(1),\\
%\|\left(\star_h R_h - R_h\star\right)\dif\star \dif\omega\|_h &\leq Ch=O(h)\\
%\|\left(\star_h R_h - R_h\star\right)\dif\star \dif\omega\|_\infty &\leq Ch=O(h) .
%\end{split}
%\end{equation*}

We have proved the following.

\begin{corollary}\label{c: consistency bound}
If $K_h$ is regular, then
\begin{equation*}
\begin{split}
\Delta_h R_h\omega - R_h \Delta \omega
&= \star_h \dif_{\,h}\left(\star_h R_h-R_h \star \right)\dif\omega +\left(\star_h R_h - R_h\star\right)\dif\star \dif\omega \\
&= O(1)+O(h) ,
\end{split}
\end{equation*}
for $\omega\in C^2\Lambda^0(\mathcal{P})$,
in both the maximum and discrete $L^2$-norm.
\end{corollary}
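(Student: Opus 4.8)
The plan is to assemble the operator-norm estimates and the Hodge-star consistency corollaries already established above; the content of the corollary is entirely in bookkeeping the powers of $h$. First I would reduce the consistency error to the two displayed terms. Since $\delta_h$ annihilates $0$-cochains, \eqref{eq: Hodge-Laplace on functions} gives $\Delta_hR_h\omega=\delta_h\dif_{\,h}R_h\omega$, so only the first rewriting in the proof of Lemma~\ref{lem: reducing consistency} is needed. Equivalently, in the four-term decomposition of that lemma the last two summands vanish for a $0$-form $\omega$: the third because $\dif\star\omega$ is an $(n+1)$-form, and the fourth because $\dif_{\,h}(\star_hR_h-R_h\star)\omega$ is the exterior derivative of a top-dimensional dual cochain. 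This yields
\begin{equation*}
\Delta_h R_h\omega - R_h \Delta \omega = \star_h \dif_{\,h}\left(\star_h R_h-R_h \star \right)\dif\omega +\left(\star_h R_h - R_h\star\right)\dif\star \dif\omega,
\end{equation*}
which is the first equality of the statement.

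For the second equality I would bound the two summands separately in each norm. The second summand is the cleaner one: $\dif\star\dif\omega$ is an $n$-form, so Corollary~\ref{cor: consistency of hodge star L2 on the dual} gives the $O(h)$ bound in $\|\cdot\|_h$, while Corollary~\ref{cor: consistency of hodge star on dual} (with the argument of Corollary~\ref{cor: consistency of hodge star}) controls it by $O(h^{k+1})$ in $\|\cdot\|_\infty$. The first summand is where a power of $h$ is lost. Writing $\psi=(\star_hR_h-R_h\star)\dif\omega$, a dual cochain to which the consistency estimates for the $1$-form $\dif\omega\in C^1\Lambda^1(\mathcal{P})$ apply, I would use that $\star_h$ is a $\|\cdot\|_h$-isometry \eqref{eq: op L2 norm Hodge star over dual} and that $\dif_{\,h}$ on the dual mesh has operator norm $\lesssim h^{-1}$ \eqref{eq: op L2 norm exterior derivative over dual}, so that
\begin{equation*}
\|\star_h \dif_{\,h}\psi\|_h \lesssim \|\dif_{\,h}\psi\|_h \lesssim h^{-1}\|\psi\|_h \lesssim h^{-1}\cdot h = O(1),
\end{equation*}
the last step being Corollary~\ref{cor: consistency of hodge star L2}. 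In the maximum norm I would instead combine \eqref{eq: op maximum norm exterior derivative over dual} and \eqref{eq: op maximum norm Hodge star over dual} with Corollary~\ref{cor: consistency of hodge star} to obtain an $O(h^{k})$ bound.

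Collecting the four estimates, the worst contribution is the $\|\cdot\|_h$-bound $O(1)$ of the first summand, every other being $O(h)$ or smaller, which proves $O(1)+O(h)$ in both norms. The conceptual obstacle—and the reason the estimate is only $O(1)+O(h)$ rather than the $O(h)$ one would want for a Lax--Richtmyer argument—is the factor $h^{-1}$ carried by the discrete exterior derivative on the dual mesh in \eqref{eq: op L2 norm exterior derivative over dual}; it degrades the genuine $O(h)$ accuracy of the Hodge star on $1$-forms back to $O(1)$. The only technical point to watch is the regularity bookkeeping: the first summand needs only $\dif\omega\in C^1\Lambda^1$, and one should verify that the $n$-form $\dif\star\dif\omega$ entering the second summand carries enough smoothness for the dual consistency corollary, which is the constraint fixing the hypothesis $\omega\in C^2\Lambda^0(\mathcal{P})$.
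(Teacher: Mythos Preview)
Your proposal is correct and follows essentially the same route as the paper: reduce the four-term decomposition of Lemma~\ref{lem: reducing consistency} to two terms for $0$-forms, then bound each using the operator-norm estimates \eqref{eq: op maximum norm exterior derivative over dual}--\eqref{eq: op L2 norm exterior derivative over dual} together with the Hodge-star consistency corollaries, obtaining $O(1)$ for the first summand and $O(h)$ for the second in both norms. Your remarks on why the last two terms of the lemma vanish and on the $h^{-1}$ loss from the dual exterior derivative are accurate and add helpful context beyond what the paper states explicitly.
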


Note that in the displayed equation above, we have used the notation $O(1)+O(h)$ to convey information on the individual sizes of the two terms appearing in the right hand side of the first line.
In fact, the preceding corollary will not be employed in what follows, since it yields the consistency error of $O(1)$.
However, the techniques we have used to estimate the individual terms and operator norms will prove to be fruitful.

%%%%%%%%%%%%%%%%%%%%%%%%%%%%%%%%%%%%%%%%
\section{Stability and convergence}
\label{s:stability}
%%%%%%%%%%%%%%%%%%%%%%%%%%%%%%%%%%%%%%%%

We carry on with a proof of the first order convergence estimates
\begin{equation*}
\|u_h-R_h u\|_h = O(h)\,\,\,\,\,\text{and}\,\,\,\,\, \|\dif\,(u_h-R_h u)\|_h=O(h),
\end{equation*}
previously stated in (\ref{intro convergence}). The demonstration is based on the discrete variational form of the Poisson problem (\ref{eq: Discrete Poisson Dirichlet}). Its expression can be derived efficiently using the next proposition.

\begin{proposition}\label{prop: adjoint of exterior derivative}
	If $\omega_h\in C^{k}(K_h)$ and $\eta_h\in C^{k+1}(K_h)$, then
	\begin{equation}
	\left(\dif_{\,h}\omega_h,\eta_h \right)_h = \left( \omega_h,\delta_h\eta_h\right)_h.
	\end{equation}
\end{proposition}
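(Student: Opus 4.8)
The plan is to reproduce at the discrete level the integration-by-parts identity $\langle\dif\omega,\eta\rangle=\langle\omega,\delta\eta\rangle$ valid on a closed manifold, by reading the discrete inner product cell by cell and transferring the boundary operator from primal simplices to dual cells. First I would expand the left-hand side straight from the definition of the inner product,
\[
(\dif_{\,h}\omega_h,\eta_h)_h=\sum_{\tau\in\Delta_{k+1}(K_h)}\langle\dif_{\,h}\omega_h,\tau\rangle\langle\hodge_h\eta_h,\dual\tau\rangle,
\]
and immediately invoke the primal Stokes identity $\langle\dif_{\,h}\omega_h,\tau\rangle=\langle\omega_h,\partial\tau\rangle$ to rewrite each term as a sum over the $k$-faces $\eta\prec\tau$, weighted by the incidence numbers $[\eta:\tau]$ recording how the induced orientation of $\tau$ on $\eta$ compares with that of $\eta$.

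Next I would interchange the order of summation, grouping by the $k$-simplex $\eta$ rather than by $\tau$, so that the expression becomes $\sum_{\eta\in\Delta_k(K_h)}\langle\omega_h,\eta\rangle\sum_{\tau\succ\eta}[\eta:\tau]\langle\hodge_h\eta_h,\dual\tau\rangle$. The crucial step is to recognize the inner sum as a pairing against the dual boundary: by Definition \ref{def: boundary of dual}, orienting each $\tau$ so that its induced orientation on $\eta$ agrees with $\eta$ is exactly what contributes the factor $[\eta:\tau]$, so that $\sum_{\tau\succ\eta}[\eta:\tau]\,\dual\tau=(-1)^{k+1}\partial\dual\eta$. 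Applying the dual Stokes identity $\langle\hodge_h\eta_h,\partial\dual\eta\rangle=\langle\dif_{\,h}\hodge_h\eta_h,\dual\eta\rangle$ then collapses the inner sum to $(-1)^{k+1}\langle\dif_{\,h}\hodge_h\eta_h,\dual\eta\rangle$.

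Finally I would reassemble and compare. The definition of the inner product gives $(\omega_h,\delta_h\eta_h)_h=\sum_{\eta\in\Delta_k(K_h)}\langle\omega_h,\eta\rangle\langle\hodge_h\delta_h\eta_h,\dual\eta\rangle$, so the proposition reduces to the cochain-level equality $\hodge_h\delta_h\eta_h=(-1)^{k+1}\dif_{\,h}\hodge_h\eta_h$ in $C^{n-k}(\dual K_h)$. Substituting $\delta_h=(-1)^{n((k+1)-1)+1}\hodge_h\dif_{\,h}\hodge_h$ on the $(k+1)$-cochain $\eta_h$ and using $\hodge_h\hodge_h=(-1)^{(n-k)k}$ on the $(n-k)$-cochain $\dif_{\,h}\hodge_h\eta_h$, the accumulated sign is $(-1)^{nk+1}(-1)^{(n-k)k}=(-1)^{k+1}$, which closes the identity.

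I expect the main obstacle to be the bookkeeping of orientations and signs, specifically verifying that orienting $\tau$ compatibly with $\eta$ in the dual boundary genuinely produces the incidence number $[\eta:\tau]$, so that the $(-1)^{k+1}$ built into Definition \ref{def: boundary of dual} matches the $(-1)^{k+1}$ emerging from the definition of $\delta_h$ together with $\hodge_h\hodge_h$. This coincidence is precisely the reason the dual boundary operator was designed to be suited for integration by parts, so the heart of the argument is confirming that these two independently derived signs agree, rather than any analytic subtlety.
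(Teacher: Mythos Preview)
Your proposal is correct and follows essentially the same route as the paper: both arguments expand the inner product, apply primal Stokes, reorganize by the $k$-simplex, and then invoke Definition~\ref{def: boundary of dual} on the dual side so that the $(-1)^{k+1}$ there cancels against the sign carried by $\delta_h$. The only cosmetic difference is that the paper first reduces by linearity to the case where $\omega_h$ is the indicator of a single $k$-simplex $\tau$, which replaces your summation interchange by a direct evaluation at $\tau$; your explicit check that $(-1)^{nk+1}(-1)^{(n-k)k}=(-1)^{k+1}$ makes the sign bookkeeping more transparent than the paper's somewhat terse reference to ``$\delta_h=(-1)^k\hodge_h^{-1}\dif_{\,h}\hodge_h$ on $k$-cochains.''
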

\begin{proof}
	It is sufficient to consider the case where $\omega_h(\tau)=1$ on a $k$-simplex $\tau$ in $K_h$ and vanishes otherwise. On the one hand,
	\begin{equation}\label{adjoint derivative equation}
	\left(\dif_{\,h}\omega_h,\eta_h \right)_h = \sum_\sigma\langle\omega_h, \partial\sigma\rangle\langle \hodge_h\eta_h, \dual\sigma\rangle =\langle\omega_h,\tau\rangle \sum_{\sigma\succ\tau}\langle\hodge\eta_h,\dual\sigma\rangle,
	\end{equation}
	where $\sigma$ is a $(k+1$)-simplex oriented so that it is consistent with the induced orientation on $\tau$. On the other hand, since $\delta_h = (-1)^{k}\hodge_h^{-1}\dif_{\,h}\hodge_h$ on $k$-cochains, it follows from definition \ref{def: boundary of dual} that
	\begin{equation*}
	\left(\omega_h,\delta_h\eta_h\right)_h = (-1)^{k+1}\langle\omega_h,\tau\rangle\langle \dif_{\,h}\hodge_h\eta_h, \dual\tau\rangle = \langle\omega_h,\tau\rangle\sum_{\sigma\succ\tau}\langle \hodge\eta_h, \dual\sigma\rangle,
	\end{equation*}
	where $\sigma$ is oriented as in (\ref{adjoint derivative equation}).
\end{proof}
Before we do so however, we prove a discrete Poincar\'e-like inequality for $0$-cochains that is essential to the argument. The inequality is established by comparing the discrete norm of these cochains with the continuous $L^2$-norm of Whitney forms
\begin{equation*}
\phi_\tau = k!\sum_{i=0}^k (-1)^i\lambda_i\dif\lambda_1\wedge...\wedge\widehat{\dif\lambda_i}\wedge...\wedge\dif\lambda_k,\,\,\,\,\,\tau\in\Delta_k(K_h),
\end{equation*}
where $\lambda_i$ is the piecewise linear hat function evaluating to $1$ at the $i$th vertex of $\tau$ and $0$ at every other vertices of $K_h$. Setting
\begin{equation*}
W_h\omega_h = \sum_{\tau}\omega_h(\tau)\phi_\tau,\,\,\,\,\, \forall \tau\in\Delta_k(K_h),
\end{equation*}
for all $\omega_h\in C^k(K_h)$ defines linear maps $W_h$, called Whitney maps, which have the key property that $W_h\dif_{\,h}\omega_h=\dif W_h\omega_h$.
\begin{theorem}\label{thm: equivalence with Whitney forms}
	Let $K_h$ be a family of regular triangulations. There exist two positive constants $c_1$ and $c_2$, independent of $h$, satisfying
	\begin{equation*}
	c_1 \|\omega_h\|_h \leq \|W_h\omega_h\|_{L^2\Lambda^k(K_h)}\leq c_2\|\omega_h\|_h, \,\,\,\,\, \omega_h\in C^k(K_h).
	\end{equation*}
\end{theorem}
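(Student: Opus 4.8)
The plan is to establish the equivalence \emph{locally}, one $n$-simplex at a time, and then reassemble the local estimates into the global bound by means of the bounded-overlap Lemma~\ref{lem: bounded number of simplices}. First I would decompose the continuous norm as
\begin{equation*}
\|W_h\omega_h\|_{L^2\Lambda^k(K_h)}^2 = \sum_{\sigma\in\Delta_n(K_h)}\|W_h\omega_h\|_{L^2\Lambda^k(\sigma)}^2 ,
\end{equation*}
and observe that on a fixed $n$-simplex $\sigma$ the restriction $W_h\omega_h|_\sigma = \sum_{\tau\prec\sigma}\omega_h(\tau)\phi_\tau$ depends only on the finitely many values $\omega_h(\tau)$ attached to the $k$-faces $\tau\prec\sigma$, since $\phi_\tau$ is supported in the closed star of $\tau$.

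The heart of the argument is a reference-element computation. Let $F_\sigma$ be the affine map carrying a fixed reference simplex $\hat\sigma$ onto $\sigma$. Because barycentric coordinates are affine invariants, the Whitney forms are natural under $F_\sigma$, that is $F_\sigma^{*}\phi_\tau=\hat\phi_{\hat\tau}$, so the pullback $F_\sigma^{*}(W_h\omega_h|_\sigma)=\sum_{\hat\tau}\omega_h(\tau)\hat\phi_{\hat\tau}$ has the \emph{same} coefficients. Since the reference Whitney forms $\{\hat\phi_{\hat\tau}\}$ are linearly independent, their Gram matrix in $L^2\Lambda^k(\hat\sigma)$ is symmetric positive definite, and its extreme eigenvalues furnish constants $0<a\le b$, depending only on $n$ and $k$, with
\begin{equation*}
a\sum_{\tau\prec\sigma}\omega_h(\tau)^2 \le \|F_\sigma^{*}(W_h\omega_h)\|_{L^2\Lambda^k(\hat\sigma)}^2 \le b\sum_{\tau\prec\sigma}\omega_h(\tau)^2 .
\end{equation*}
I would then transport this back to $\sigma$ by the change-of-variables formula: the Jacobian contributes a factor $\vert\det DF_\sigma\vert\sim h_\sigma^{\,n}$, while the pullback acting on $k$-covectors contributes a factor comparable to $\|(DF_\sigma)^{-1}\|^{2k}\sim h_\sigma^{-2k}$. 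Shape regularity bounds the condition number of $DF_\sigma$ independently of $h$ and of $\sigma$, so these two factors combine to give
\begin{equation*}
\|W_h\omega_h\|_{L^2\Lambda^k(\sigma)}^2 \sim h_\sigma^{\,n-2k}\sum_{\tau\prec\sigma}\omega_h(\tau)^2 ,
\end{equation*}
with implied constants depending only on $C_{\mathrm{reg}}$, $n$, and $k$.

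Finally I would reassemble. Summing over $\sigma$ and exchanging the order of summation gives $\sum_{\tau}\omega_h(\tau)^2\sum_{\sigma\succ\tau}h_\sigma^{\,n-2k}$. Shape regularity forces all edges of a simplex to be comparable to its diameter, hence every face $\tau\prec\sigma$ satisfies $\diam(\tau)\sim\diam(\sigma)=h_\sigma$; consequently all $n$-simplices in the star of a fixed $\tau$ have diameter comparable to $h_\tau:=\diam(\tau)$, and their number is at most $C_{\#}$ by Lemma~\ref{lem: bounded number of simplices}. Therefore $\sum_{\sigma\succ\tau}h_\sigma^{\,n-2k}\sim h_\tau^{\,n-2k}$. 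The same star-wise size comparisons yield $\vert\tau\vert\sim h_\tau^{\,k}$ and $\vert\dual\tau\vert\sim h_\tau^{\,n-k}$, so that $\vert\dual\tau\vert/\vert\tau\vert\sim h_\tau^{\,n-2k}$, and we conclude
\begin{equation*}
\|W_h\omega_h\|_{L^2\Lambda^k(K_h)}^2 \sim \sum_{\tau\in\Delta_k(K_h)}h_\tau^{\,n-2k}\,\omega_h(\tau)^2 \sim \sum_{\tau\in\Delta_k(K_h)}\frac{\vert\dual\tau\vert}{\vert\tau\vert}\,\omega_h(\tau)^2 = \|\omega_h\|_h^2 ,
\end{equation*}
which is the claimed two-sided bound with $c_1,c_2$ independent of $h$. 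I expect the main obstacle to be the uniformity of the constants in the affine-equivalence step: one must verify that shape regularity controls the condition number of every $DF_\sigma$ uniformly, and then propagate this uniformity through the change of variables and the star-wise size comparisons, so that no hidden dependence on $h$ slips into $c_1$ and $c_2$.
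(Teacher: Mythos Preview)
Your proposal is correct and follows essentially the same scaling argument as the paper: restrict to a single $n$-simplex, pass to a reference simplex via an affine map, invoke equivalence of norms on the finite-dimensional span of Whitney forms, and track the powers of $h$ contributed by the Jacobian and by the action on $k$-covectors to arrive at $\|W_h\omega_h\|^2_{L^2\Lambda^k(\sigma)}\sim h^{n-2k}\sum_{\tau\prec\sigma}\omega_h(\tau)^2$. Your treatment of the reassembly step (summing over $\sigma$, exchanging the order, and invoking Lemma~\ref{lem: bounded number of simplices} for bounded overlap) is in fact more explicit than the paper's, which leaves the passage from the local equivalence to the global one implicit; and your use of the exact naturality $F_\sigma^{*}\phi_\tau=\hat\phi_{\hat\tau}$ is cleaner than the paper's bookkeeping of the scaling factor $|\det B_{\hat\tau}|$, though both lead to the same conclusion.
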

\begin{proof}
	We proceed using a scaling argument. Suppose $\sigma=[v_0,...,v_n]\in\Delta_n(K_h)$, and assume $\tau\prec\sigma$ is an orientation of the face $v_0,...,v_k$. Let $\{\lambda_i\}$ be the barycentric coordinate functions associated to the vertices of $\sigma$ (these are the piecewise linear hat functions used in the definition of Whitney forms). The $1$-forms $\dif\lambda_0,...,\widehat{\dif\lambda_{\ell}},...,\dif\lambda_n$ are a basis for $\text{Alt}^n(\mathbb{R}^n)$ dual to $t^{\ell}_i=v_i-v_{\ell}$, $i=1,...,n$, i.e. the vectors $t^{\ell}_1,...t^{\ell}_n$ form a basis for $P(\sigma)$ and satisfy $\dif\lambda_1\wedge...\wedge \widehat{\lambda_{\ell}}\wedge...\wedge\dif\lambda_n(t^{\ell}_1,...,t^{\ell}_n)=1$. In this setting, it follows in general that for any oriented face $\rho=[v_{\rho(0)},...,v_{\rho(m)}]$ of $\sigma$,
	\begin{multline*}
	\vert \rho \vert = \vert\det\left(v_{\rho(1)}-v_{\rho(0)},...,v_{\rho(m)}-v_{\rho(0)}\right)\vert\\
	=\vert\det\left(t^{\rho(0)}_1,...,t^{\rho(0)}_m\right)\vert
	=\pm_{\rho}\frac{1}{m!}\vol_\rho(t^{\rho(0)}_1,...,t^{\rho(0)}_n).
	\end{multline*}
In other words,
\begin{equation}\label{eq: change of basis simplex}
\dif\lambda_{\rho(1)}\wedge...\wedge \widehat{\dif\lambda_{\rho(0)}} \wedge...\wedge\dif\lambda_{\rho(m)}=\pm_{\rho}\frac{1}{\vert\rho\vert m!}\vol_{\rho},
\end{equation}
where $\pm_{\rho}$ depends on the orientation of $\rho$.

Now let $\hat{\sigma}$ denote the standard $n$-simplex in $\mathbb{R}^n$, and consider an affine transformation $F:\hat{\sigma}\longrightarrow\sigma$ of the form $F=B+b$, where $B:\mathbb{R}^n\longrightarrow\mathbb{R}^n$ is a linear map and $b\in\mathbb{R}^n$ a vector. We compute the pullback $\hat{\phi_{\tau}}=F^*\phi_{\tau}$ of a Whitney $k$-form $\phi_{\tau}$ using (\ref{eq: change of basis simplex}). To lighten notation, we write $\hat{\tau}=F^{-1}(\tau)$ and $B_{\hat{\tau}}=B\vert_{P(\hat{\tau})}$. We evaluate directly
	\begin{align*}
	\left(F^* \phi_\tau\right)_x &= k!\sum_{i=0}^k \frac{\pm_i\lambda_{i}\circ F}{k!\vert\tau\vert}F^*\vol_{\tau} \\
	&= k!\sum_{i=0}^k (-1)^i\left(\lambda_{i}\circ F\right) \vert\det B_{\hat{\tau}}\vert\bigwedge_{\substack{\ell=0\\ \ell\neq i}}^k \dif\lambda_{\ell} \\
	&= \vert\det B_{\hat{\tau}}\vert\left(\phi_{\tau}\right)_{F(x)}.
	\end{align*}
A change of variables then yields
	\begin{align*}
	\int_{\sigma}\|\sum_{\tau}\omega_h(\tau)\phi_{\tau}\|^2 \vol_{\sigma}&= \int_{\sigma}\|\sum_{\tau}\frac{\omega_h(\tau)}{\vert \det B_{\hat{\tau}}\vert}(\hat{\phi}_{\tau})_{F^{-1}}\|^2\vol_{\sigma}\\
	&= \vert\det B\vert\int_{\hat{\sigma}}\|\sum_{\tau}\frac{\omega_h(\tau)}{\vert \det B_{\hat{\tau}}\vert}\hat{\phi}_{\tau}\|^2\vol_{\hat{\sigma}}.
	\end{align*}
	Using the equivalence of norms on finite dimensional Banach spaces and the regularity assumption on $K_h$, we therefore conclude that
	\begin{equation*}
	\|W_h\omega_h\|^2_{L^2\Lambda^k(\sigma)}\sim \vert\det B\vert\sum_{\tau}\left(\frac{\omega_h(\tau)}{\vert \det B_{\hat{\tau}}\vert}\right)^2
	\sim h^{n-2k}\sum_{\tau}\omega_h(\tau)^2
	\sim \|\omega_h\|_h^2,
	\end{equation*}
	where the equivalences do not depend on $h$.
\end{proof}

\begin{corollary}\label{cor: poincare}
	There exists a constant $C$, independent of $h$, such that the discrete Poincar\'e inequality
	\begin{equation*}
	\|\omega_h\|_h \leq C\|\dif_{\,h}\omega_h\|_h
	\end{equation*}
	holds for all $\omega_h\in C^0(K_h)$ such that $\omega_h = 0$ on $\partial K_h$.
\end{corollary}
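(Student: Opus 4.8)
The plan is to transfer the classical Poincar\'e inequality from the continuous setting to the discrete one by means of the Whitney map, using the norm equivalence of Theorem~\ref{thm: equivalence with Whitney forms} as the bridge in both directions. The starting observation is that for a $0$-cochain the Whitney form reduces to the continuous piecewise-linear interpolant: when $k=0$ the defining formula for $\phi_\tau$ collapses to $\phi_\tau=\lambda_\tau$, so that $W_h\omega_h=\sum_v\omega_h(v)\lambda_v$ is the nodal interpolant of the vertex values. Since $\omega_h$ vanishes on $\partial K_h$, i.e.\ at every boundary vertex, and since on each boundary face the interpolant is the affine function determined by these vanishing nodal values, the function $W_h\omega_h$ has zero trace on $\partial\mathcal{P}$ and therefore belongs to $H^1_0(\mathcal{P})$.

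With this in hand I would chain four estimates. First, the lower bound of Theorem~\ref{thm: equivalence with Whitney forms} (applied with $k=0$) gives $\|\omega_h\|_h\le c_1^{-1}\|W_h\omega_h\|_{L^2\Lambda^0(K_h)}$. Second, the classical Poincar\'e inequality on $H^1_0(\mathcal{P})$ furnishes a constant $C_P$, depending only on the fixed domain $\mathcal{P}$ and hence independent of $h$, with $\|W_h\omega_h\|_{L^2\Lambda^0(K_h)}\le C_P\|\dif W_h\omega_h\|_{L^2\Lambda^1(K_h)}$. Third, the commutation property $\dif W_h=W_h\dif_{\,h}$ rewrites the right-hand side as $\|W_h\dif_{\,h}\omega_h\|_{L^2\Lambda^1(K_h)}$. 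Fourth, the upper bound of Theorem~\ref{thm: equivalence with Whitney forms} (now applied to the $1$-cochain $\dif_{\,h}\omega_h$) yields $\|W_h\dif_{\,h}\omega_h\|_{L^2\Lambda^1(K_h)}\le c_2\|\dif_{\,h}\omega_h\|_h$. Concatenating the four inequalities gives the claim with $C=c_1^{-1}C_Pc_2$.

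The delicate point, and the only place where the hypothesis $\omega_h=0$ on $\partial K_h$ is used, is the second step above: one must know that the Whitney interpolant lies in $H^1_0(\mathcal{P})$ so that the continuous Poincar\'e inequality is applicable. This requires interpreting the discrete boundary condition as the vanishing of $\omega_h$ at all boundary vertices and checking that the resulting piecewise-linear function is genuinely zero (not merely zero at nodes) along $\partial\mathcal{P}$, which holds because the restriction of $W_h\omega_h$ to any boundary face is affine and vanishes at all vertices of that face. Everything else is bookkeeping: the two norm-equivalence constants are $h$-independent by Theorem~\ref{thm: equivalence with Whitney forms}, and the Poincar\'e constant is $h$-independent because $\mathcal{P}$ is fixed, so the final constant $C$ is independent of $h$ as required.
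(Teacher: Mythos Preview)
Your proof is correct and follows exactly the same route as the paper: transfer to Whitney forms via Theorem~\ref{thm: equivalence with Whitney forms}, apply the continuous Poincar\'e inequality, use $\dif W_h=W_h\dif_{\,h}$, and transfer back. You have in fact supplied more detail than the paper, which omits the verification that $W_h\omega_h\in H^1_0(\mathcal{P})$; your argument that the piecewise-linear interpolant vanishes on each boundary face (not just at boundary vertices) is the right justification and fills a small gap in the paper's one-line proof.
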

\begin{proof}
	Using Theorem \ref{thm: equivalence with Whitney forms} and the Poincar\'e inequality, we have
	\begin{equation}
	\|\omega_h\|_h\lesssim\|W_h\omega_h\|_h\lesssim\|\dif W_h\omega_h\|_{L^2\Lambda^k(K_h)}=\| W_h\dif_{\,h}\omega_h\|_{L^2\Lambda^k(K_h)}\lesssim\|\dif_{\,h}\omega_h\|_h ,
	\end{equation}
	which establishes the proof.
\end{proof}

The argument is twofold. We first restrict our attention in (\ref{eq: Discrete Poisson Dirichlet}) to the homogeneous boundary condition $g=0$, and introduce in a second breath inhomogeneous Dirichlet conditions. Consider the following variational formulations.

Suppose that $\nu_h\in C^0(K_h)$ vanishes everywhere but at a vertex $p$. For all $\omega_h \in C^0(K_h)$, we have
\begin{equation*}
\left(\delta_h\dif_{\,h}\omega_h,\nu_h\right)_h - \left( R_h f,\nu_h\right)_h = \nu_h(p)\vert\dual p\vert\left( \langle\delta_h\dif_{\,h}\omega_h,p\rangle - \langle R_hf,p\rangle\right).
\end{equation*}
In other words, $\Delta_h\omega_h = R_h f$ if and only if $\left( \delta_hd_h\omega_h,\nu_h\right)_h = \left( R_h f,\nu_h\right)_h$ for all $\nu_h$. By Proposition \ref{prop: adjoint of exterior derivative}, we may thus equivalently find a discrete function $\omega_h$ vanishing on $\partial K_h$ such that
\begin{equation}\label{eq: critical solutions}
\left(\dif\omega_h,\dif_{\,h}\nu_h\right) = \left(R_h f,\nu_h\right)_h
\end{equation}
for all $0$-cochains $\nu_h$ satisfying the homogeneous boundary condition. Note that this problem in turn reduces to the one of minimizing the energy functional
\begin{equation*}\label{eq: energy}
E_h(\nu_h)= \frac{1}{2}\left(\dif_{\,h}\nu_h,\dif_{\,h}\nu_h\right)_h-\left(R_hf,\nu_h\right)_h
\end{equation*}
over the same collection of $\nu_h$. Indeed a direct computation shows that $\omega_h$ satisfies (\ref{eq: critical solutions}) if and only if $\frac{\dif}{\dif\epsilon}\big\vert_{\epsilon = 0}E_h\left(\omega_h+\epsilon \nu_h\right) = 0$ for all these $\nu_h$, in which case we further conclude from
\begin{equation*}
E_h\left(\omega_h+\nu_h\right)= \underbrace{\frac{1}{2}\left(\dif_{\,h}\omega_h,\dif_{\,h}\omega_h\right)_h}_{\geq 0}+\underbrace{\left(\omega_h,\nu_h\right)_h-\left(R_hf,\omega_h\right)_h}_{= 0\text{ by (\ref{eq: critical solutions})}}+\underbrace{\frac{1}{2}\left(\nu_h,\nu_h\right)_h-\left(R_hf,\nu_h\right)_h}_{E_h(\nu_h)}
\end{equation*}
that $\omega_h$ is a minimizer. Since $\left(\dif_{\,h}u_h,\dif_{\,h}u_h\right)_h=0$ if and only if $u_h$ is constant, $\delta_h\dif_{\,h}$ is invertible over the space of discrete functions vanishing on $\partial K_h$. The existence and uniqueness of a solution to (\ref{eq: critical solutions}) is thus guaranteed for all $h$.

Using Corollary \ref{cor: poincare} in (\ref{eq: critical solutions}) and assuming $R_hf$ is not identically $0$ (the case with trivial solution $\omega_h=0$), we obtain
\begin{equation*}
\left(\dif_{\,h}\omega_h,\dif_{\,h}\omega_h\right)_h\leq\|\omega_h\|_h\|R_hf\|_h\leq C\|\dif_{\,h}\omega_h\|_h\|R_hf\|_h,
\end{equation*}
and another application of that corollary finally yields the stability estimate
\begin{equation*}
\|\omega_h\|_h\leq C \|R_hf\|_h.
\end{equation*}

Unfortunately, convergence cannot be obtained with an application of the Lax-Richtmyer theorem with this estimate and the consistency bound derived in Corollary \ref{c: consistency bound}. However, Proposition \ref{prop: adjoint of exterior derivative} comes to our rescue.  Given a solution $\omega$ to the associated continuous problem, it allows us to use the expression for $\Delta_h$ given in Lemma \ref{lem: reducing consistency} to evaluate the error $e_h=R_h\omega-\omega_h$ with
\begin{align*}
\left(\dif_{\,h}e_h,\dif_{\,h}e_h\right)_h&=\left(\Delta_he_h,e_h\right)_h\\
&= \left(\star_h \dif_{\,h}\left(\star_h R_h-R_h \star \right)\dif\omega,e_h\right)_h +\left(\left(\star_h R_h - R_h\star\right)\dif\star \dif\omega,e_h\right)_h\\
&=\left(\star_h^{-1}\left(\star_h R_h-R_h \star \right)\dif\omega,\dif_{\,h}e_h\right)_h + \left(\left(\star_h R_h - R_h\star\right)\dif\star \dif\omega,e_h\right)_h\\
&\leq Ch\|\dif_{\,h} e\|_h+Ch\|e_h\|_h,
\end{align*}
where the last inequality was obtained from an application of Corollary \ref{cor: consistency of hodge star L2}, (\ref{eq: op L2 norm Hodge star over dual}) and the second to last estimate in Corollary \ref{c: consistency bound}. Using the discrete Poincar\'e-like inequality again completes the proof of the homogeneous case of the following theorem.
\begin{mdframed}
\begin{theorem}
The unique discrete solution $\omega_h\in C^0(K_h)$ of problem (\ref{eq: Discrete Poisson Dirichlet}) stated for $0$-forms over a regular triangulation $K_h$ satisfy
\begin{equation*}
\|e_h\|_h\leq C\|\dif_{\, h}e_h\|_h= O(h).
\end{equation*}
\end{theorem}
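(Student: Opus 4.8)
The plan is to complete the proof by reducing the inhomogeneous Dirichlet problem to the homogeneous situation already settled above, the essential point being that the error $e_h = R_h\omega - \omega_h$ vanishes on $\partial K_h$ irrespective of the boundary data $g$. First I would dispose of existence and uniqueness for arbitrary $g$: writing $\omega_h = G_h + w_h$, where $G_h$ is the $0$-cochain agreeing with $R_h g$ on $\partial K_h$ and vanishing at all interior vertices, the system \eqref{eq: Discrete Poisson Dirichlet} is equivalent to finding $w_h$ vanishing on $\partial K_h$ with $\Delta_h w_h = R_h f - \Delta_h G_h$ at every interior vertex. Since $\delta_h\dif_{\,h}$ was shown to be invertible over the space of $0$-cochains vanishing on $\partial K_h$, such a $w_h$ exists and is unique, hence so is $\omega_h$.

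The crucial observation is that $e_h$ vanishes on $\partial K_h$ in the general case as well. Indeed, for a boundary vertex $p$ one has $\langle R_h\omega,p\rangle = \omega(p) = g(p) = \langle R_h g,p\rangle = \langle\omega_h,p\rangle$, using that the continuous solution satisfies $\omega = g$ on $\partial\mathcal{P}$ together with the discrete Dirichlet condition. Consequently the discrete Poincar\'e inequality of Corollary \ref{cor: poincare} applies to $e_h$ verbatim, and this is exactly what is needed to rerun the homogeneous argument.

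With this in hand I would repeat the energy computation. Proposition \ref{prop: adjoint of exterior derivative} gives the unconditional identity $(\dif_{\,h}e_h,\dif_{\,h}e_h)_h = (\Delta_h e_h,e_h)_h$; because $e_h$ vanishes at every boundary vertex, the inner product on the right only collects interior vertices, precisely where the Dirichlet equation $\Delta_h\omega_h = R_h f = R_h\Delta\omega$ forces $\Delta_h e_h$ to coincide with the consistency error $\Delta_h R_h\omega - R_h\Delta\omega$ of Lemma \ref{lem: reducing consistency}. Substituting that expression, using the isometry property \eqref{eq: op L2 norm Hodge star over dual} of $\star_h$ together with the adjoint identity to transfer $\star_h\dif_{\,h}$ onto $\dif_{\,h}e_h$ in the first term, and invoking Corollary \ref{cor: consistency of hodge star L2} and the $O(h)$ bounds of Corollary \ref{c: consistency bound} on the two consistency pieces, I arrive at $(\dif_{\,h}e_h,\dif_{\,h}e_h)_h \le C h\,(\|\dif_{\,h}e_h\|_h + \|e_h\|_h)$. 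A final application of Corollary \ref{cor: poincare} bounds $\|e_h\|_h$ by $\|\dif_{\,h}e_h\|_h$ and yields $\|\dif_{\,h}e_h\|_h^2 \le C'h\,\|\dif_{\,h}e_h\|_h$, so that $\|\dif_{\,h}e_h\|_h = O(h)$ and then $\|e_h\|_h \le C\|\dif_{\,h}e_h\|_h = O(h)$.

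The only genuinely new step beyond the homogeneous case is the boundary-vanishing observation in the second paragraph; once it is secured, the entire machinery --- the Poincar\'e inequality, the energy identity, and the term-by-term consistency estimates --- carries over without change. The point requiring care is that the discrete equation $\Delta_h\omega_h = R_h f$ holds only at interior vertices, so the substitution of the consistency error into $(\Delta_h e_h, e_h)_h$ is legitimate only there; this is harmless precisely because the boundary-vertex contributions drop out once $e_h = 0$ on $\partial K_h$.
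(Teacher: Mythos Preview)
Your proposal is correct and follows essentially the same route as the paper: the homogeneous energy argument is reproduced verbatim once one observes that $e_h$ vanishes on $\partial K_h$, and the inhomogeneous case reduces to this by a trivial splitting. If anything, your treatment is slightly more explicit than the paper's in noting that the discrete equation $\Delta_h\omega_h = R_h f$ is imposed only at interior vertices, and that this causes no trouble precisely because the boundary contributions to $(\Delta_h e_h, e_h)_h$ drop out.
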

\end{mdframed}

As one would expect, the case of inhomogeneous boundary conditions reduces to the homogeneous one. Given $g_h\neq \vec{0}$, the Poisson problem is to find $\omega_h$ in the affine space $\{g+\eta_h\vert\, \eta_h\in C^0(K_h), \eta_h = 0\, \text{on}\,\,\, \partial K_h\}$ satisfying (\ref{eq: critical solutions}). That is, we are looking for   $u_h\in C^0(K_h)\cap\{\eta_h:\eta_h\vert_{\partial K_h}=0\}$ satisfying
\begin{equation}\label{eq: critical solutions inhomogeneous}
\left(\dif_{\,h}u_h,\dif_{\,h}\nu_h\right)_h= \left(R_hf, \nu_h \right)_h - \left(\dif_{\, h}g_h,\dif_{\, h}\nu_h\right)_h = F(u_h,\nu_h),
\end{equation}
for all $\nu_h$ vanishing on the boundary, which is equivalent to the previous homogeneous problem, but with the linear functional $F:C^0(K_h)\times C^0(K_h)\longrightarrow \mathbb{R}$ on the right hand side. Repeating the previous arguments, we obtain the stability estimate
\begin{equation}
\|u_h\|_h\leq C\left( \|R_hf\|_h+\|\dif_{\, h} g_h\|_h\right).
\end{equation}
The rest of the proof goes through similarly, so that the solution $\omega_h=g_h+u_h$ to (\ref{eq: Discrete Poisson Dirichlet}) with inhomogeneous boundary condition $g$ is first order convergent.

%%%%%%%%%%%%%%%%%%%%%%%%%%%%%%%%%%%%%%%%
\section{Numerical experiments}
\label{s:numerics}
%%%%%%%%%%%%%%%%%%%%%%%%%%%%%%%%%%%%%%%%

In this section, we report on some numerical experiments performed over two and three dimensional triangulations. The discrete solutions are computed from the inverse of $\Delta_h$ built as a compound of the operators defined in Section \ref{subs: Discrete Operators}. The volumes needed to implement the Hodge stars as diagonal matrices $\hodge^h_k$ are computed in a standard way. The matrices $\dif^{\,h}_{\,k-1,\text{Primal}}=(\partial^h_{k})^T$ acting on vector representations of cochains (denoted by $[\cdot]$) in $C^{k-1}(K_h)$ are created using the algorithm suggested in \parencite{BH12}. By construction, the orientation of $\dif^{\,h}_{\,k-1,\text{Primal}}[\tau]\in\Delta_{k}(K_h)$ is therefore consistent with the orientation of the $(k-1)$-simplex $\tau$.  Hence, it is suitably oriented for definition \ref{def: boundary of dual} to imply
\begin{align*}
\left(\dif^{\,h}_{\,\,n-k,\text{Dual}}\hodge^h_k\lbrack\omega_h\rbrack\right)^T \lbrack\tau\rbrack&=
\langle \dif_{\,h}\hodge_h\omega_h, \dual\tau\rangle\\
&=(-1)^k\sum_{\eta\succ\tau}\langle\hodge_h\omega_h,\dual\eta\rangle\\
&= (-1)^k\left(\hodge_h^k[\omega_h]\right)^T\dif^{\,h}_{\,k-1,\text{Primal}}[\tau]\\
&=(-1)^k\left((\dif^{\,h}_{\,k-1,\text{Primal}})^T\hodge_h^k[\omega_h]\right)^T[\tau]
\end{align*}
for all $\omega_h\in C^k(K_h)$ and $\tau\in\Delta_{k-1}(K_h)$. This proves the practical definition
\begin{equation*}
\dif^{\,h}_{\,\,n-k,\text{Dual}}=(-1)^k\left(\dif^{\,h}_{\,k-1,\text{Primal}}\right)^T = (-1)^k \partial^h_k
\end{equation*}
given in \parencite{Desbrun2008}, which we use in the following.

\subsection{Experiments in two dimensions}
Convergence of DEC solutions for $0$-forms is first studied over two types of convex polygons. Non-convex polygons are then used to evaluate the consequences of a lack of regularity caused by re-entrant corners.
%These experiments raise questions pertaining to the true meaning of the discrete integral norms of discrete exterior calculus.

\subsubsection{Regular polygons}\label{Regular n-gons}
We conduct numerical experiments on refinements of the form $\{K_{c\cdot2^{-i}}\}_{i=0}^N$ of the wheel graph $W_{5}$ whose boundary is a regular pentagon. Triangulation of the pentagonal domain is performed by recursively subdividing each elementary $2$-simplex of the initial complex into its four inscribed subtriangles delimited by the graph of its medial triangle and its boundary. We illustrate the process in Figure \ref{fig: W5 Initial Mesh}.

A discrete solution to the Hodge-Laplace Dirichlet problem with trigonometric exact solution $u(x,y)=x^2\sin(y)$ over this complex is displayed along with its error function in Figure \ref{results convex polygons}. The size of the errors for its collection of refinements with $N=9$ is found in different discrete norms in Table \ref{table: Integral Errors Pentagon Trig}.
The results show second order convergence both in the discrete $L^2$ and $H^1$ norms.
Other cases where the initial primal triangulation is designed from a wheel graph on $n+1$ vertices are also considered. In particular, repeating the experiment on regular $n$-gons with $6\leq n \leq 8$ yields the same asymptotics.

\begin{figure}
	\centering
	\begin{subfigure}[b]{0.45\textwidth}
		\centering
		\includegraphics[width=\textwidth]{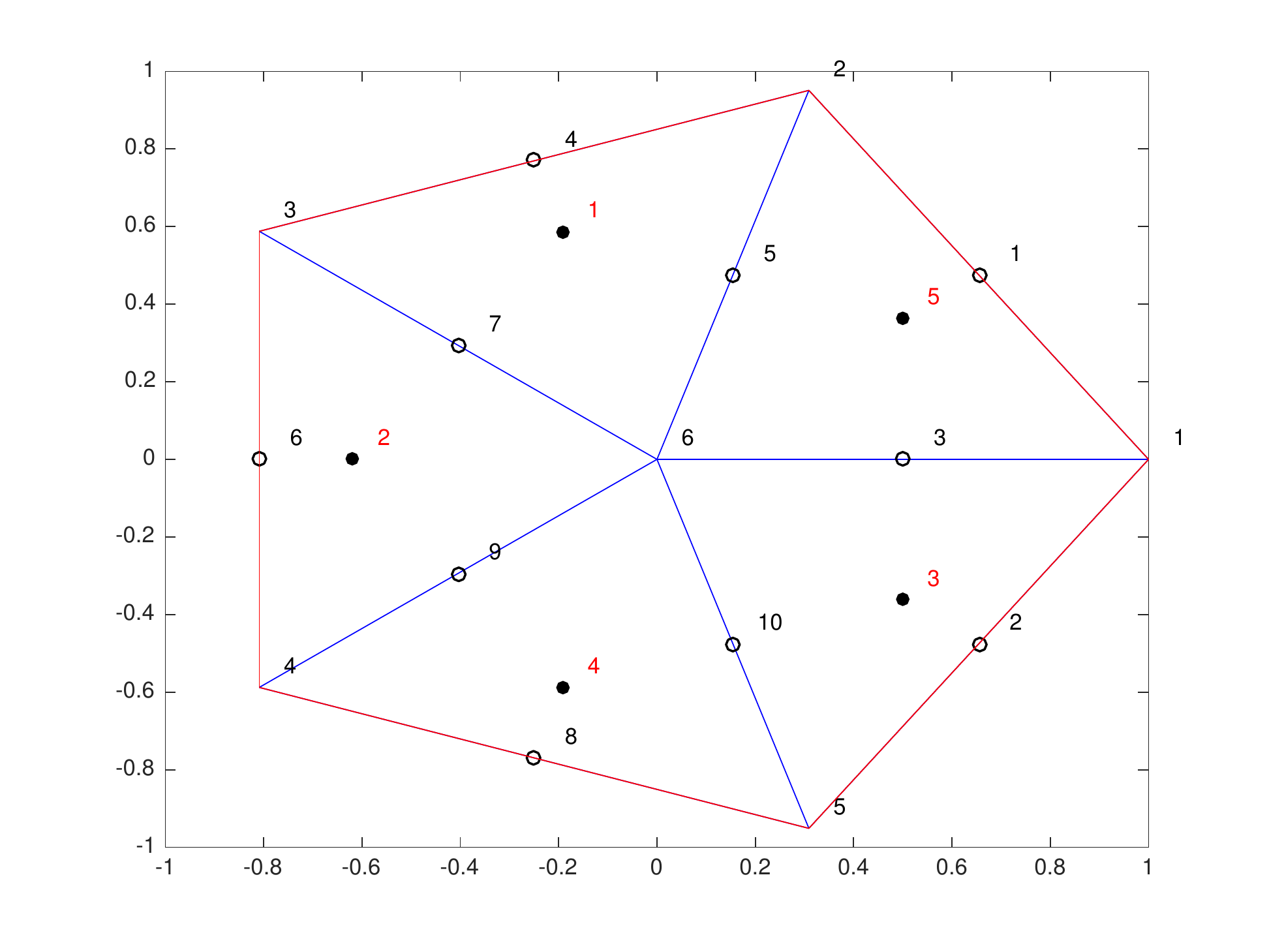}
		\caption{Initial complex $K_c$}
	\end{subfigure}
	\begin{subfigure}[b]{0.45\textwidth}
		\centering
		\includegraphics[width=\textwidth]{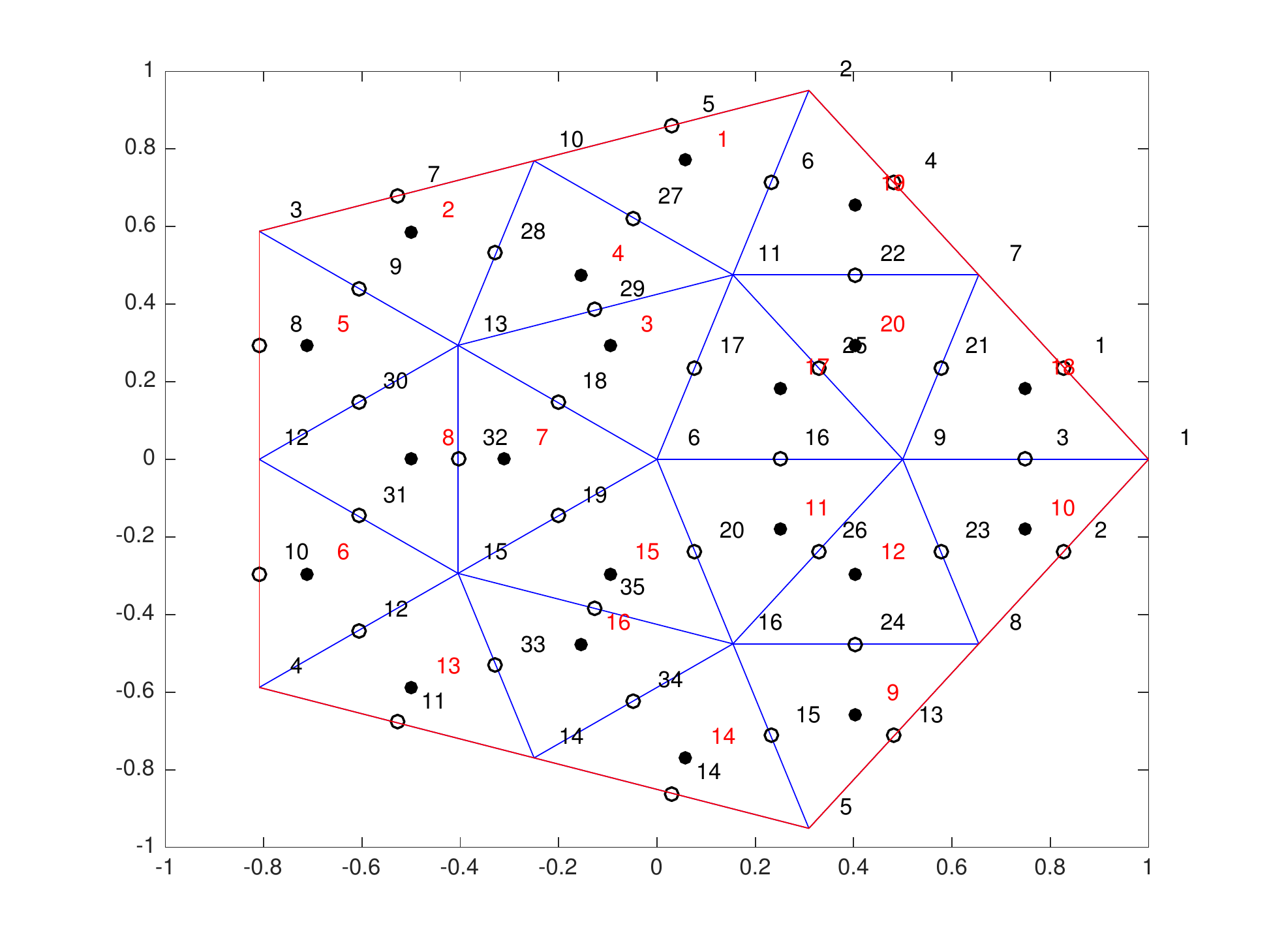}
		\caption{Refined complex $K_{c\cdot2^{-1}}$}
	\end{subfigure}
	\caption{The initial primal triangulation of $W_5$ and its first refinement are shown. On the left, (a) was obtained by fixing the length of the interior edges to $1$, consequently setting the initial maximal length for the edges to $c=(2-2\cos(2\pi/5))^{\frac{1}{2}}$. The triangles in $K_c$ are subdivided by the edges of their medial triangles to produce (b).}\label{fig: W5 Initial Mesh}
\end{figure}

\begin{figure}
	\centering
	\begin{subfigure}[b]{0.45\textwidth}
		\centering
		\includegraphics[width=\textwidth]{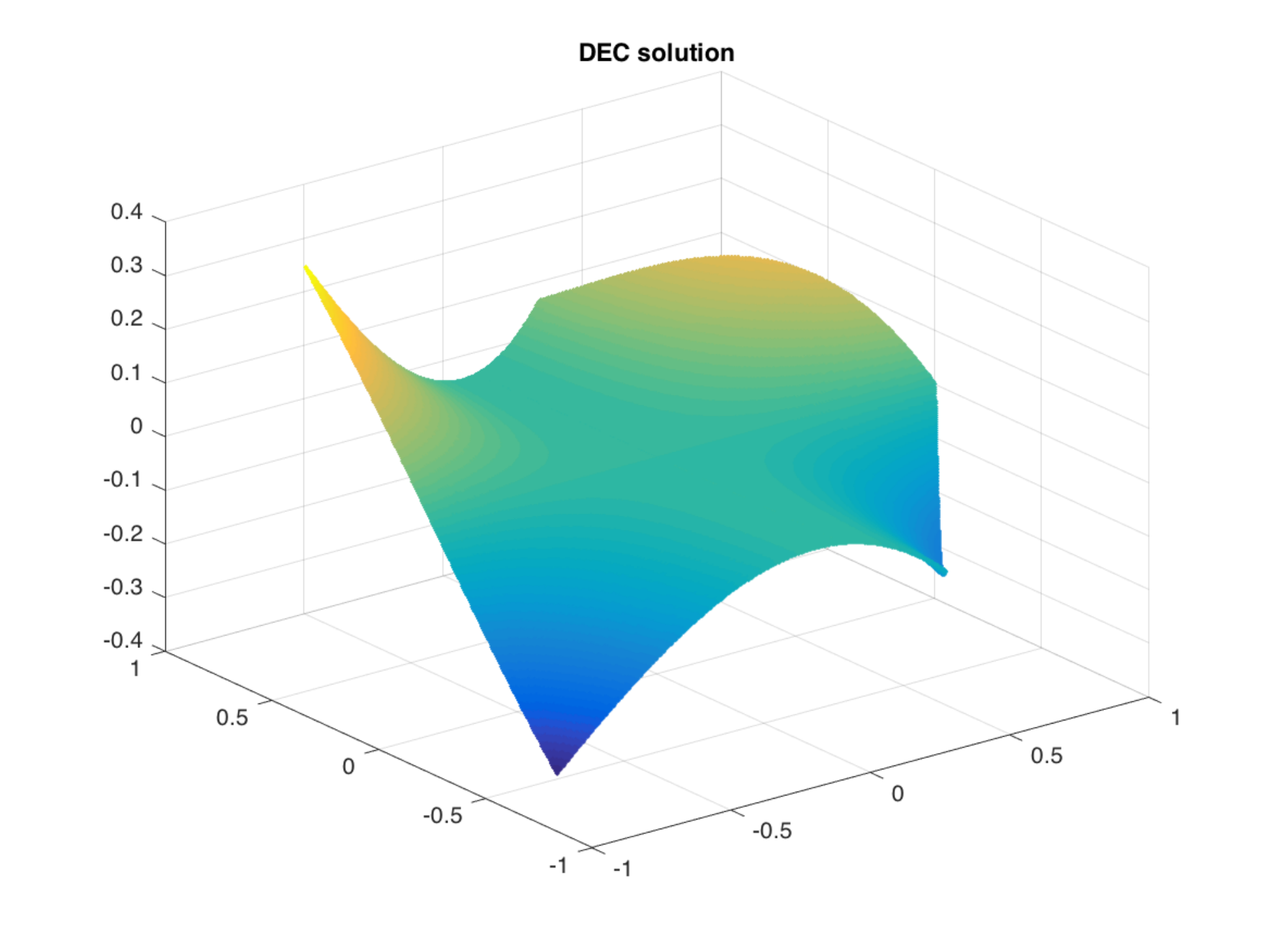}
		\caption{$u_{C\cdot 2^{-8}}$}
	\end{subfigure}
	\begin{subfigure}[b]{0.45\textwidth}
		\centering
		\includegraphics[width=\textwidth]{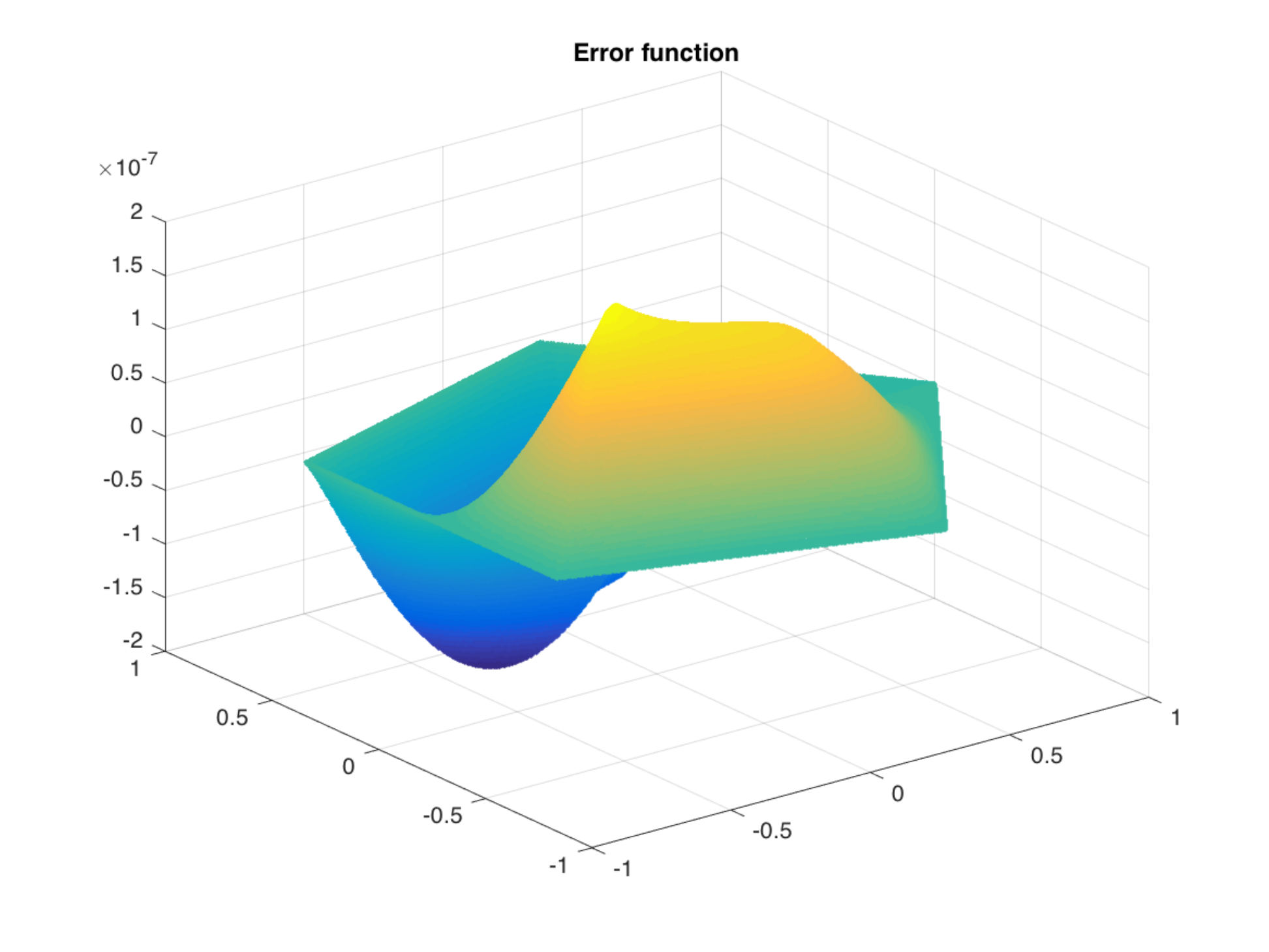}
		\caption{$e_{C\cdot 2^{-8}}$}
	\end{subfigure}
	\caption{This figure refers to the experiment $u(x,y)=x^2\sin(y)$. On the left, a DEC solution to the Hodge-Laplace Dirichlet problem over $T_{C\cdot 2^{-8}}$ is displayed. A plot of the related error function is found on the right.}\label{results convex polygons}
\end{figure}

\begin{table}
\scalebox{0.70}{
\begin{tabular}{|c||c|c|c|c|c|c|}
\hline
$i$ &$e_i^\infty=\|e_{C\cdot 2^{-i}}\|_{\infty}$& $\log\left(e_i^\infty/e^\infty_{i-1}\right)$ & $e^{H^1_d}=\|\dif e_{C\cdot 2^{-i}}\|_{C\cdot 2^{-i}}$& $\log\left(e_i^{H^1_d}/e^{H^1_d}_{i-1}\right)$ & $e^{L_d^2}=\|e_{C\cdot 2^{-i}}\|_{C\cdot 2^{-i}}$ & $\log\left(e_i^{L^2_d}/e^{L^2_d}_{i-1}\right)$ \\
\hline
0 & 1.561251e-17 & - & 2.975694e-17 & - & 1.487847e-17 & -\\
\hline
1 & 3.202794e-03 & -4.754362e+01 & 1.072846e-02 & -4.835714e+01 & 2.821094e-03 & -4.743002e+01\\
\hline
2 & 7.836073e-04 & 2.031128e+00 & 2.879579e-03 & 1.897512e+00 & 6.332754e-04 & 2.155350e+00\\
\hline
3 & 1.956510e-04 & 2.001848e+00 & 7.353114e-04 & 1.969431e+00 & 1.532456e-04 & 2.046987e+00\\
\hline
4 & 4.891893e-05 & 1.999818e+00 & 1.849975e-04 & 1.990850e+00 & 3.798925e-05 & 2.012183e+00\\
\hline
5 & 1.227086e-05 & 1.995157e+00 & 4.633277e-05 & 1.997401e+00 & 9.477213e-06 & 2.003057e+00\\
\hline
6 & 3.067823e-06 & 1.999949e+00 & 1.158895e-05 & 1.999283e+00 & 2.368052e-06 & 2.000762e+00\\
\hline
7 & 7.669629e-07 & 1.999987e+00 & 2.897627e-06 & 1.999806e+00 & 5.919350e-07 & 2.000190e+00\\
\hline
8 & 1.917491e-07 & 1.999937e+00 & 7.244331e-07 & 1.999948e+00 & 1.479789e-07 & 2.000047e+00\\
\hline
		\end{tabular}
	}
\caption{Experiment of Subsection \ref{Regular n-gons} with $u(x,y)=x^2\sin(y)$. The terms $C\cdot2^{-i}\in\mathbb{R}$ indicates the values of $h$ in $\|\cdot\|_h$.}\label{table: Integral Errors Pentagon Trig}
	\end{table}

\subsubsection{Rectangular domains}\label{Rectangular Domains}
Three different types of regular triangulations were used to investigate convergence of DEC solutions over the unit square. These appear as degenerate examples of circumcentric complexes in which the circumcenters lie on the hypotenuses of the primal triangles. Examples of these triangulations are shown in Figure \ref{fig: Triangulations of the Square}.

The computations carried over the unit square produce results similar to the ones previously obtained in Subsection \ref{Regular n-gons}.
\begin{figure}
	\begin{subfigure}{0.32\textwidth}
		\centering
		\includegraphics[width=\textwidth]{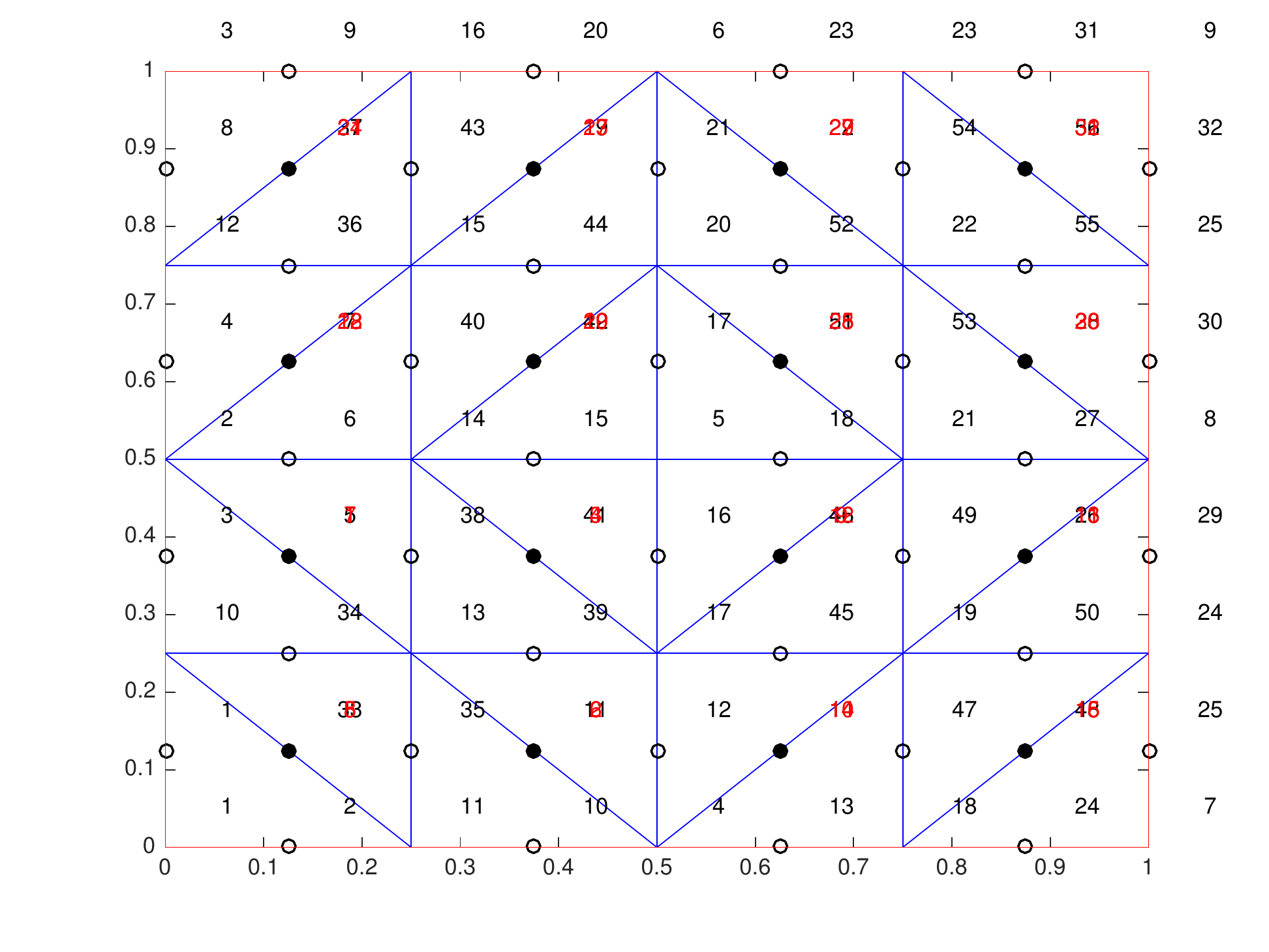}

	\end{subfigure}
	\begin{subfigure}{0.32\textwidth}
		\centering
		\includegraphics[width=\textwidth]{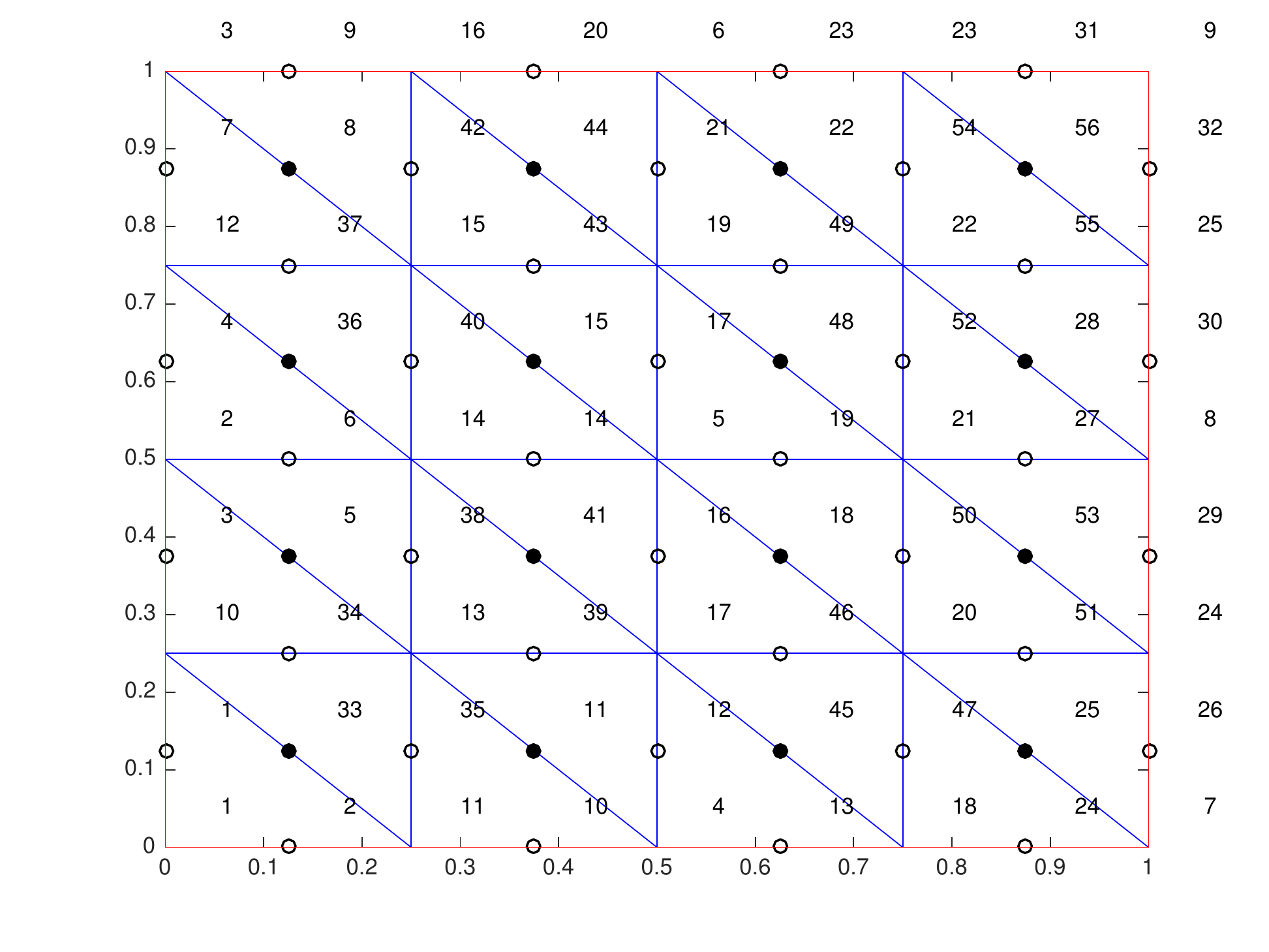}
	\end{subfigure}
	\begin{subfigure}{0.32\textwidth}
		\centering
		\includegraphics[width=\textwidth]{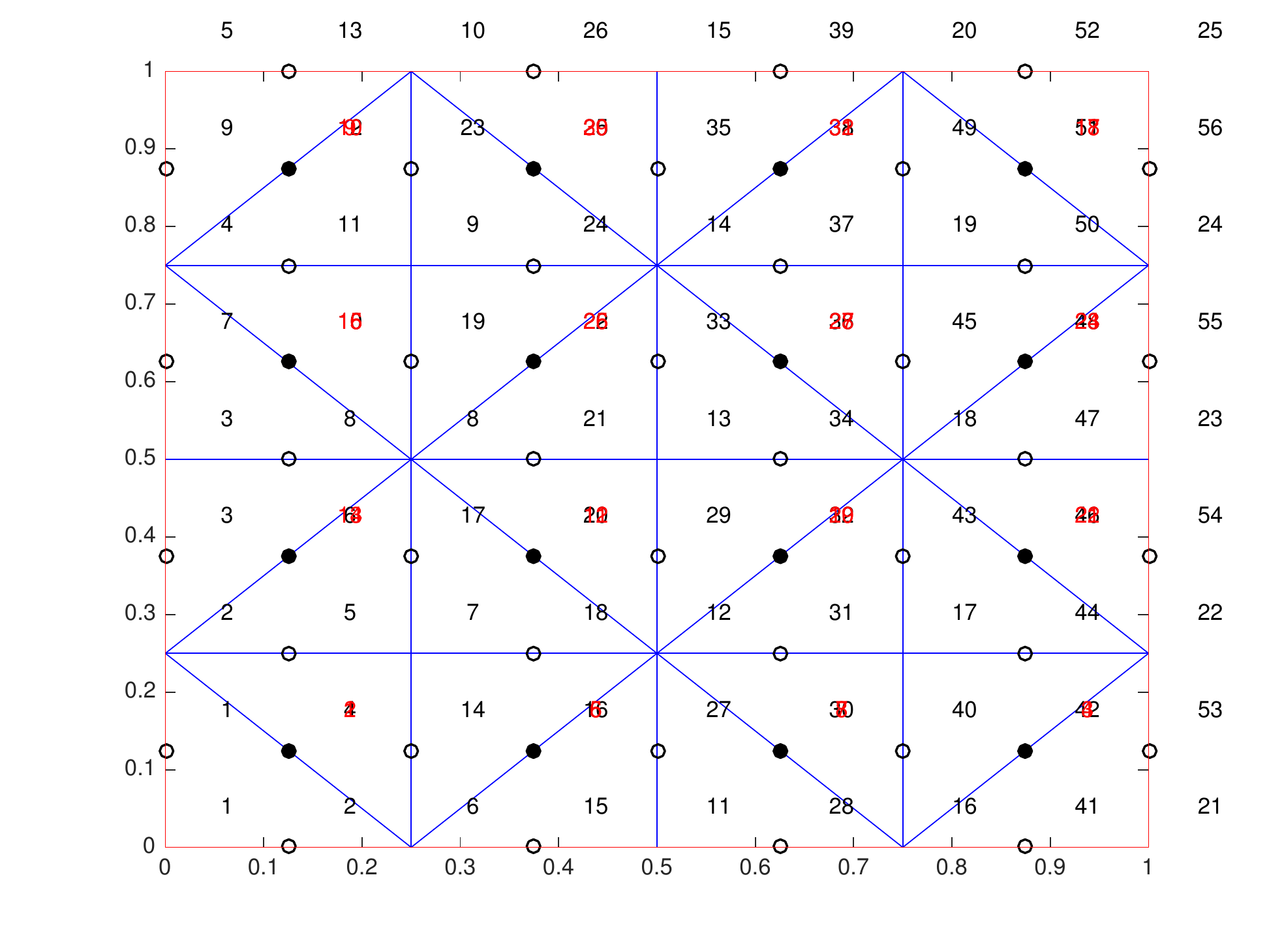}
	\end{subfigure}
	\caption{Three regular triangulations of the square.}\label{fig: Triangulations of the Square}
\end{figure}

\subsubsection{Unstructured meshes}\label{sub: Asymmetric Convex Polygons}
Second order convergence in all norm is also observed for unstructured meshes on convex polygons.
The discrete solutions behaved similarly over non-convex polygons when the solution is smooth enough.
One example of a primal domain over which the algorithm was tested is shown in Figure \ref{fig:asymmetric mesh}.

\begin{figure}
	\centering
	\begin{subfigure}[b]{0.45\textwidth}
		\centering
		\includegraphics[width=\textwidth]{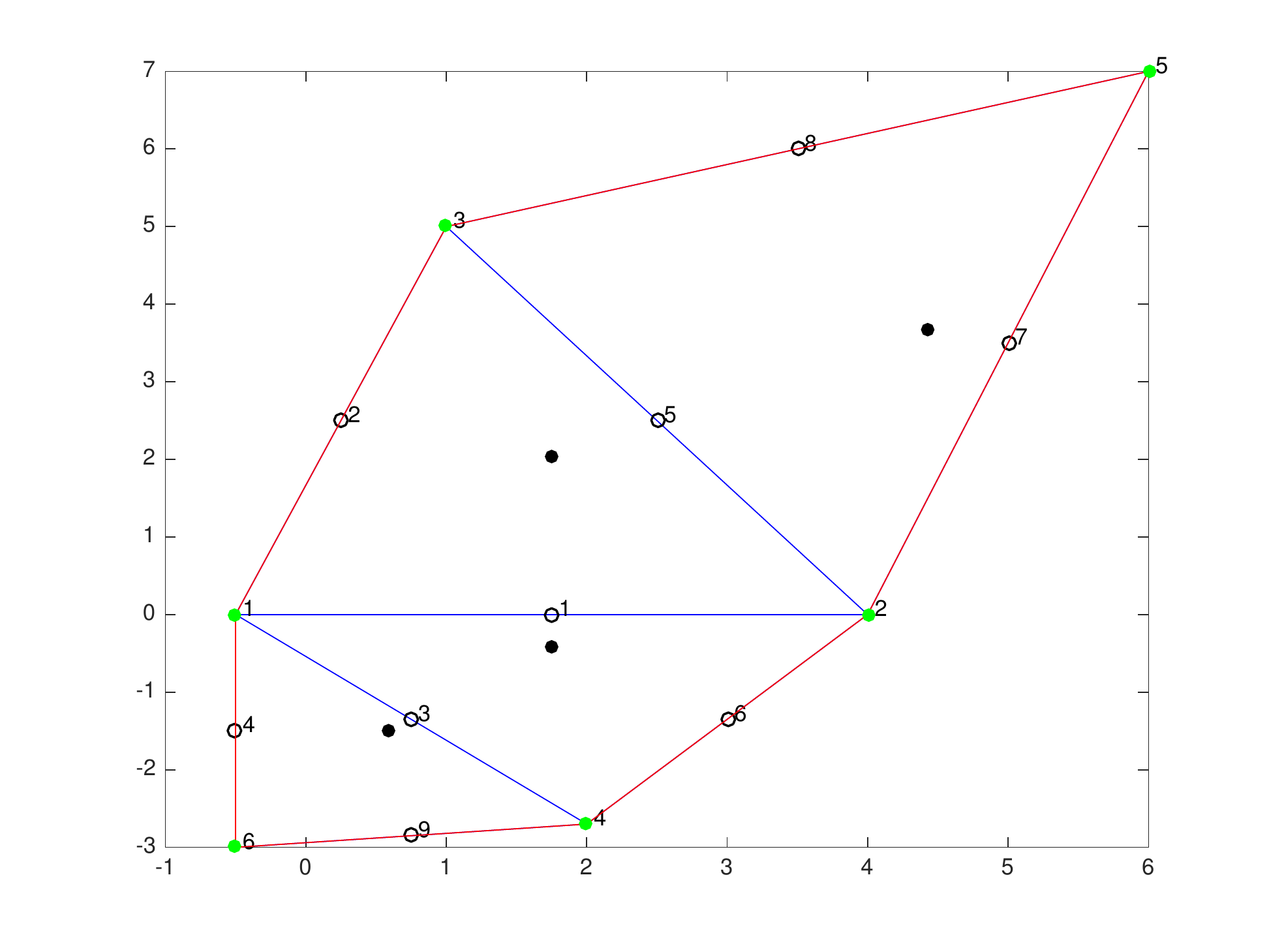}
		\caption{Initial complex $K_c$}
	\end{subfigure}
	\begin{subfigure}[b]{0.45\textwidth}
		\centering
		\includegraphics[width=\textwidth]{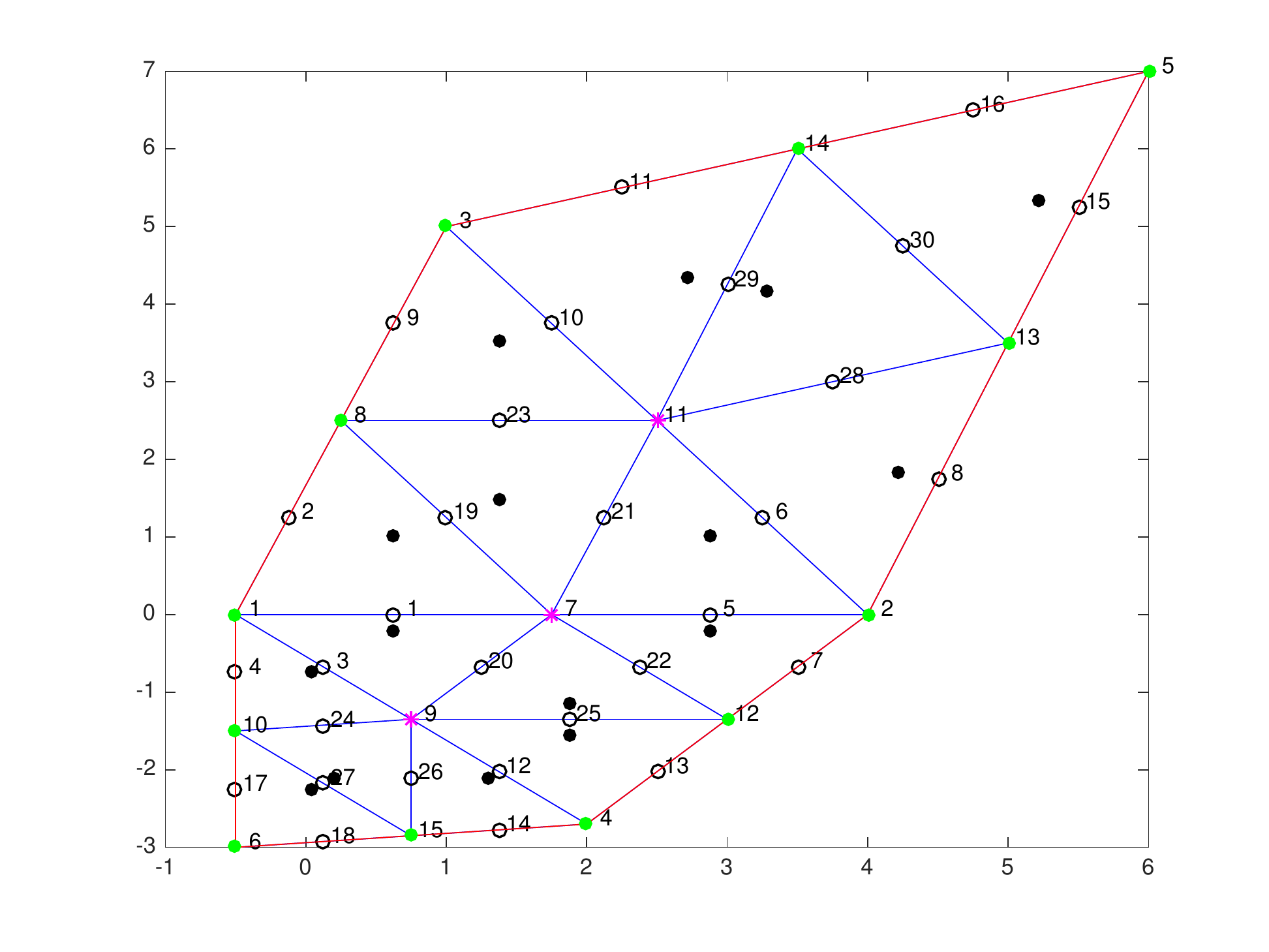}
		\caption{Refined complex $K_{c\cdot2^{-1}}$}
	\end{subfigure}
	\caption{Unstructured triangulations were refined as in Subsection \ref{Regular n-gons}.}\label{fig:asymmetric mesh}
\end{figure}

\subsubsection{Non-convex polygons}\label{concave Polygons}
We study the convergence behavior of DEC when a reentrant corner is present along the boundary of the primal complex.
More precisely, we consider re-entrant corners with various angles, leading to the exact solutions $u\in H^{1+\mu}(\mathcal{P})$, $0<\mu<1$,
which lack $H^2$ regularity.

Elected candidates for $u\in H^{1+\mu}(\mathcal{P})$ were harmonic functions of the form $r^{\mu}\sin(\mu\theta)$. These functions suit the model of Figure \ref{fig: Pentagon with Corner} (A) for $\mu=\pi/(2\pi -\beta)=\pi/\alpha$ and the result of the experiments can found in Table \ref{table: Integral Errors Pentagon Corner 5/8}.
The initial triangulation for this case is also plotted along its first refinement in Figure \ref{fig: Pentagon with Corner}. The refinement algorithm of Subsection \ref{Regular n-gons} was used. Other experiments were performed and led to the empirical observation that $\|u-u_h\|_h\sim h^{2\mu}$ and $\|\dif\,(u-u_h)\|_h\sim h^{\mu}$.
This convergence behavior is precisely what would be expected when a finite element method is applied to the same problem,
which may be explained by the fact that
our discretization is equivalent to a finite element method, cf. \parencite{HPW06,Wardetzky08}

\begin{figure}
	\begin{subfigure}{0.32\textwidth}
		\centering
			\definecolor{cqcqcq}{rgb}{0.7529411764705882,0.7529411764705882,0.7529411764705882}
			\definecolor{sqsqsq}{rgb}{0.12549019607843137,0.12549019607843137,0.12549019607843137}
			\definecolor{aqaqaq}{rgb}{0.6274509803921569,0.6274509803921569,0.6274509803921569}
			\scalebox{0.535}{
		\begin{tikzpicture}
		(14.605669383349145,2.902663975507868);
		\fill[color=aqaqaq,fill=aqaqaq,pattern=north east lines,pattern color=aqaqaq] (4.72,-3.1133515029206382) -- (8.61968766844833,-4.332418268388487) -- (10.984158821850333,-1.000307247602113) -- (8.545794691623108,2.2781173829998234) -- (4.674331628792105,0.9721842134802645) -- cycle;
		\draw [shift={(7.52,-1.02)},color=sqsqsq,fill=sqsqsq,fill opacity=0.5] (0,0) -- (72.9536800605884:0.6725886422328028) arc (72.9536800605884:359.78017786952046:0.6725886422328028) -- cycle;
		\fill[color=cqcqcq,fill=cqcqcq,fill opacity=1.0] (8.545794691623108,2.2781173829998234) -- (7.516251579186198,-1.0350368026550791) -- (11.,-1.0333515029206348) -- cycle;
		\draw [color=aqaqaq] (4.72,-3.1133515029206382)-- (8.61968766844833,-4.332418268388487);
		\draw [color=aqaqaq] (8.61968766844833,-4.332418268388487)-- (10.984158821850333,-1.000307247602113);
		\draw [color=aqaqaq] (10.984158821850333,-1.000307247602113)-- (8.545794691623108,2.2781173829998234);
		\draw [color=aqaqaq] (8.545794691623108,2.2781173829998234)-- (4.674331628792105,0.9721842134802645);
		\draw [color=aqaqaq] (4.674331628792105,0.9721842134802645)-- (4.72,-3.1133515029206382);
		\draw (6.216654682045458,-1.0961812246762428) node[anchor=north west] {$\alpha$};
		\draw [color=cqcqcq] (11.,-1.0333515029206348)-- (8.545794691623108,2.2781173829998234);
		\draw [line width=2.pt] (8.545794691623108,2.2781173829998234)-- (4.674331628792105,0.9721842134802644);
		\draw [line width=2.pt] (4.674331628792105,0.9721842134802644)-- (7.516251579186198,-1.0350368026550791);
		\draw [line width=2.pt] (4.674331628792105,0.9721842134802644)-- (4.719621231177869,-3.0794664855630187);
		\draw [line width=2.pt] (4.719621231177869,-3.0794664855630187)-- (8.61968766844833,-4.332418268388486);
		\draw [line width=2.pt] (7.516251579186198,-1.0350368026550791)-- (8.61968766844833,-4.332418268388486);
		\draw [line width=2.pt] (8.61968766844833,-4.332418268388486)-- (11.,-1.0333515029206348);
		\draw [shift={(7.516251579186198,-1.0350368026550791)},color=cqcqcq,fill=cqcqcq,fill opacity=1.0]  (0,0) --  plot[domain=4.837604210764684E-4:1.2695119391391207,variable=\t]({1.*3.483748828454465*cos(\t r)+0.*3.483748828454465*sin(\t r)},{0.*3.483748828454465*cos(\t r)+1.*3.483748828454465*sin(\t r)}) -- cycle ;
		\draw [line width=2.pt,dash pattern=on 3pt off 3pt] (7.516251579186198,-1.0350368026550791)-- (11.,-1.0333515029206348);
		\draw (10.129897691399947,-0.43226828150799318) node[anchor=north west] {$\Gamma$};
		\draw (7.855325192212651,-0.2009935674185522) node[anchor=north west] {$\beta$};
		\draw [line width=2.pt,dash pattern=on 3pt off 3pt] (8.545794691623108,2.2781173829998234)-- (7.516251579186198,-1.0350368026550791);
		\begin{scriptsize}
		\draw [fill=black] (8.545794691623108,2.2781173829998234) circle (2.5pt);
		\draw [fill=black] (7.516251579186198,-1.0350368026550791) circle (2.5pt);
		\draw [fill=black] (11.,-1.0333515029206348) circle (2.5pt);
		\draw [fill=black] (4.674331628792105,0.9721842134802644) circle (2.5pt);
		\draw [fill=black] (4.719621231177869,-3.0794664855630187) circle (2.5pt);
		\draw [fill=black] (8.61968766844833,-4.332418268388486) circle (2.5pt);
		\end{scriptsize}
		\end{tikzpicture}
	}
	\caption{Model of exact solutions}
	\end{subfigure}
	\begin{subfigure}{0.32\textwidth}
		\centering
		\includegraphics[width=\textwidth]{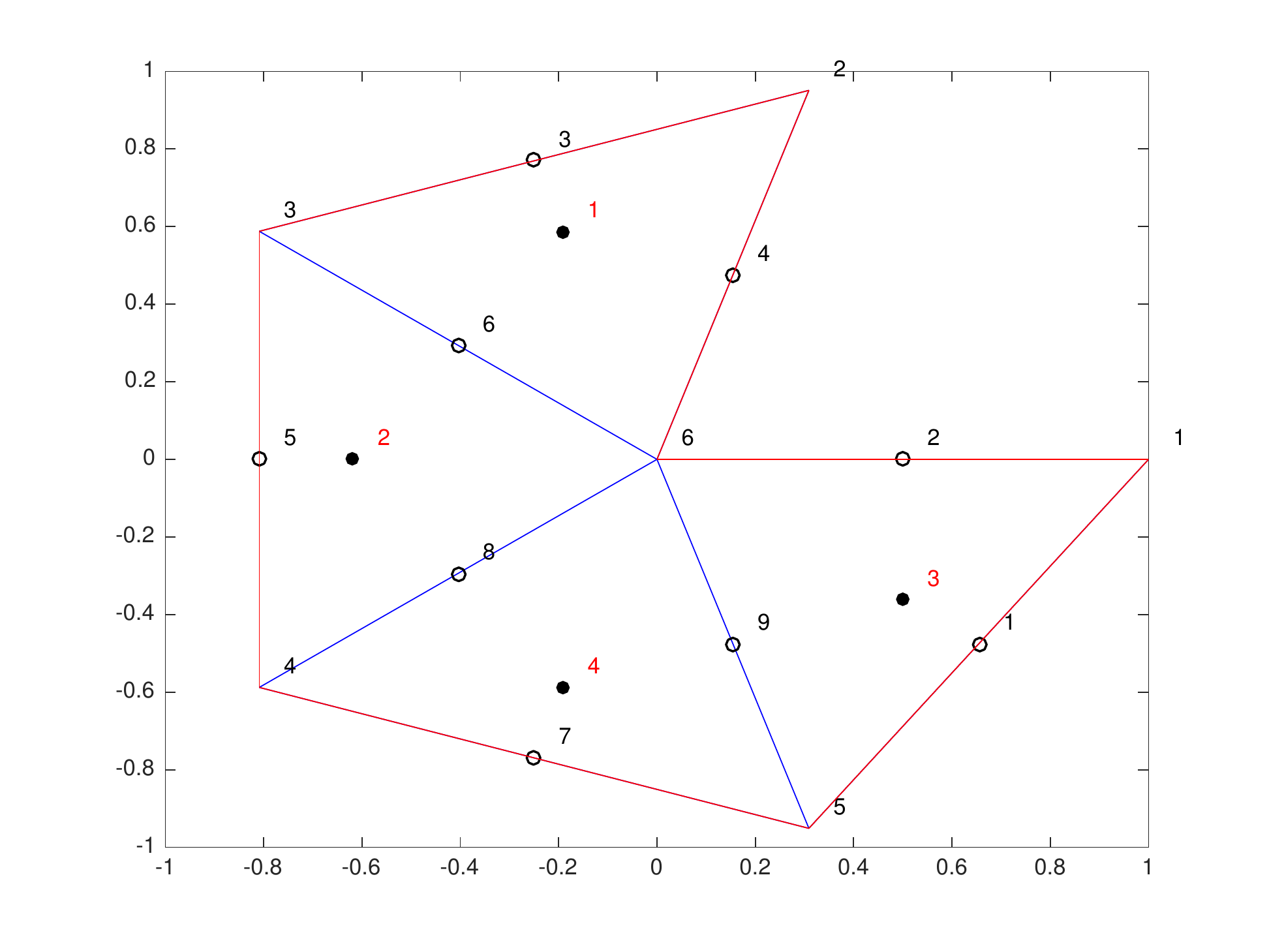}
		\caption{Initial complex $K_c$}
	\end{subfigure}
	\begin{subfigure}{0.32\textwidth}
		\centering
		\includegraphics[width=\textwidth]{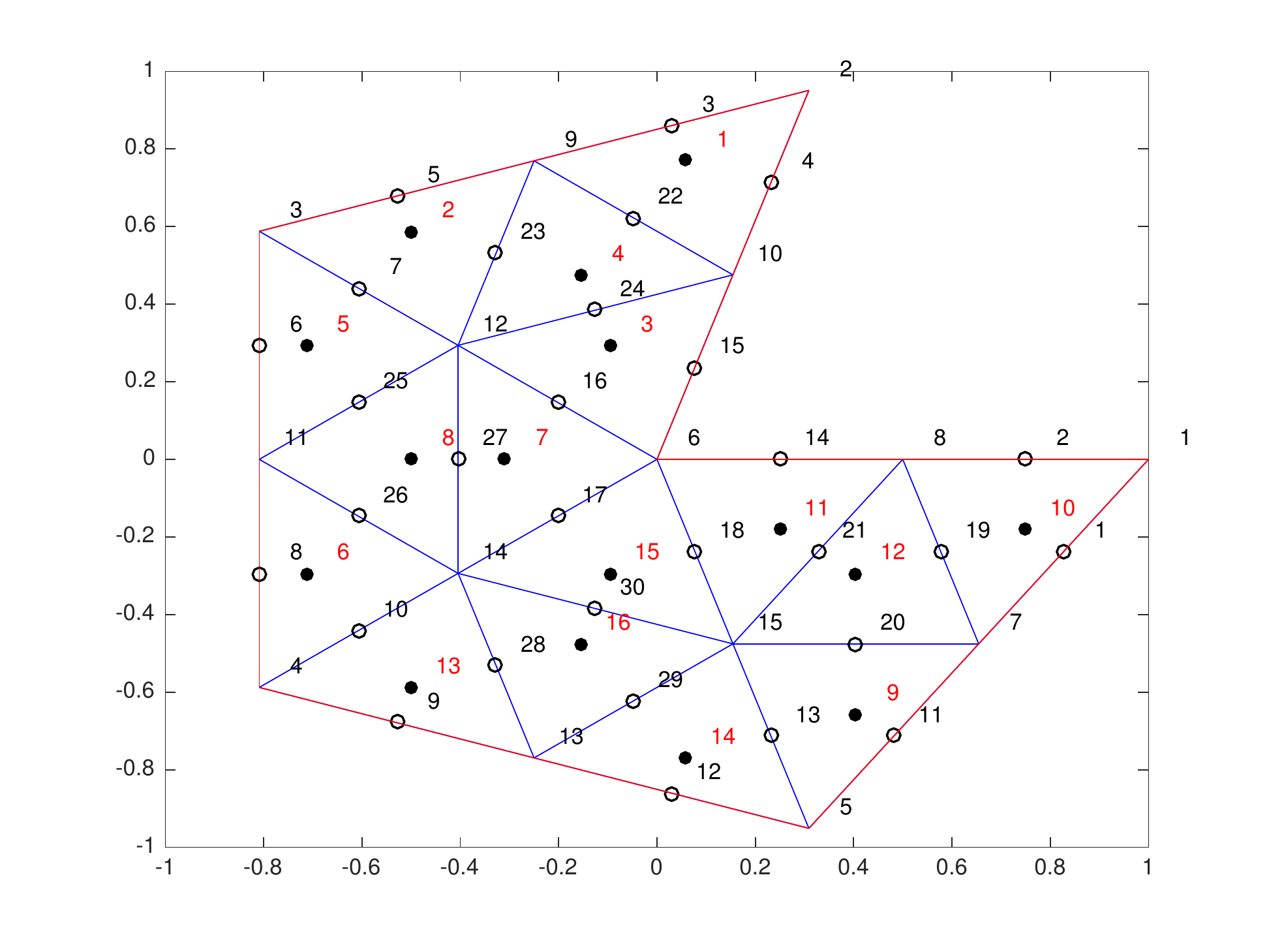}
		\caption{Refined complex $T_{c\cdot2^{-1}}$}
	\end{subfigure}
	\caption{We design an exact solution $u\in H^{1+\mu}(\mathcal{P})$, $0<\mu<1$, such that $\Delta u = 0$ in $\mathcal{P}$ and $u=0$ on the dotted piece of the boundary $\Gamma$ drawn in (A). The  red boundary shown in (B) and (C) corresponds to a concave cycle of $W_5$ on which calculations were conducted. This is an instance of the case where $\alpha=8\pi/5$ in (A). The constant $c$ was fixed as in Figure \ref{fig: W5 Initial Mesh}.}\label{fig: Pentagon with Corner}
\end{figure}

\begin{figure}
	\centering
	\begin{subfigure}{0.45\textwidth}
		\centering
		\includegraphics[width=\textwidth]{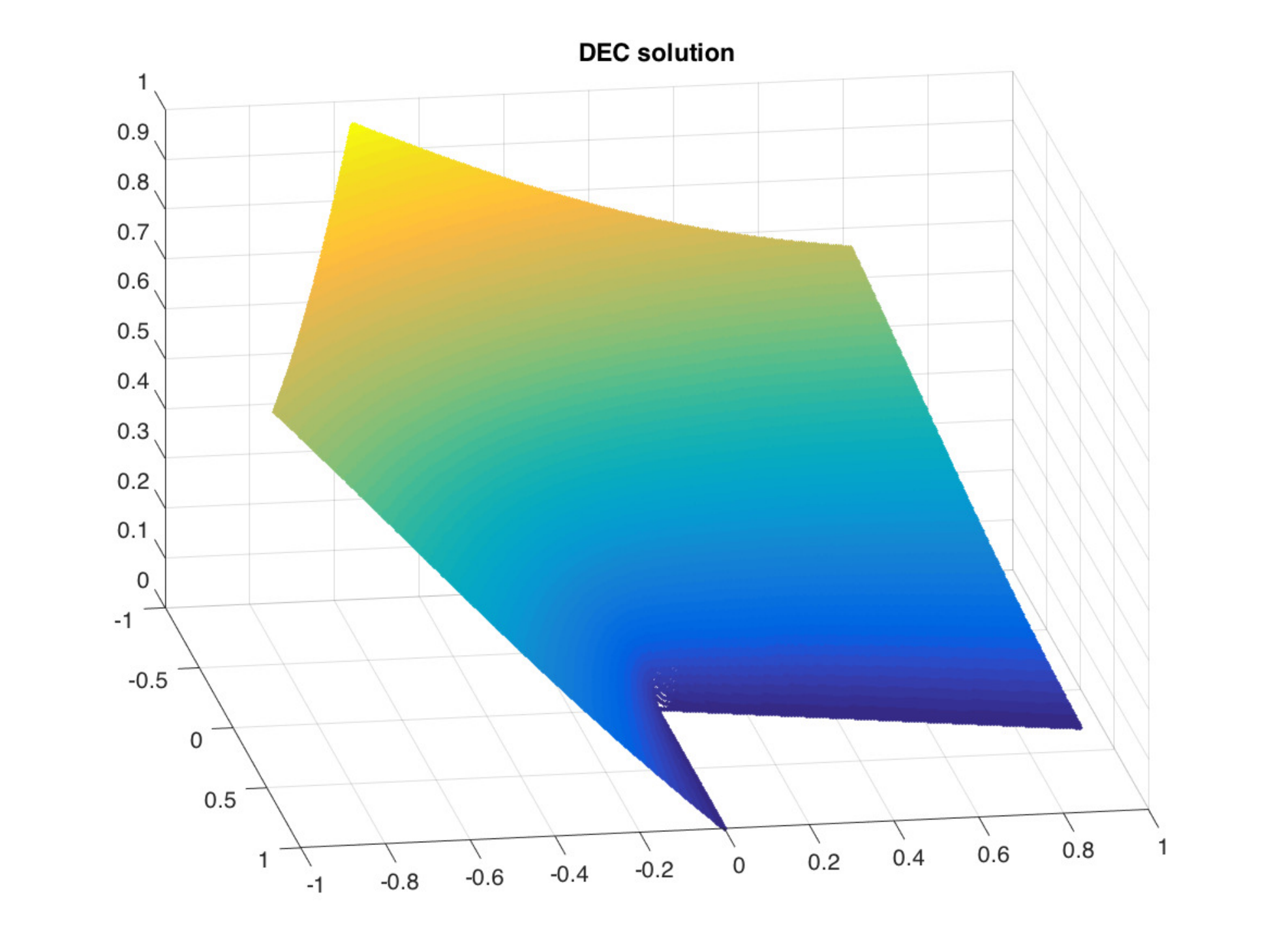}
		\caption{$u_{C\cdot 2^{-8}}$}
	\end{subfigure}
	\begin{subfigure}{0.45\textwidth}
		\centering
		\includegraphics[width=\textwidth]{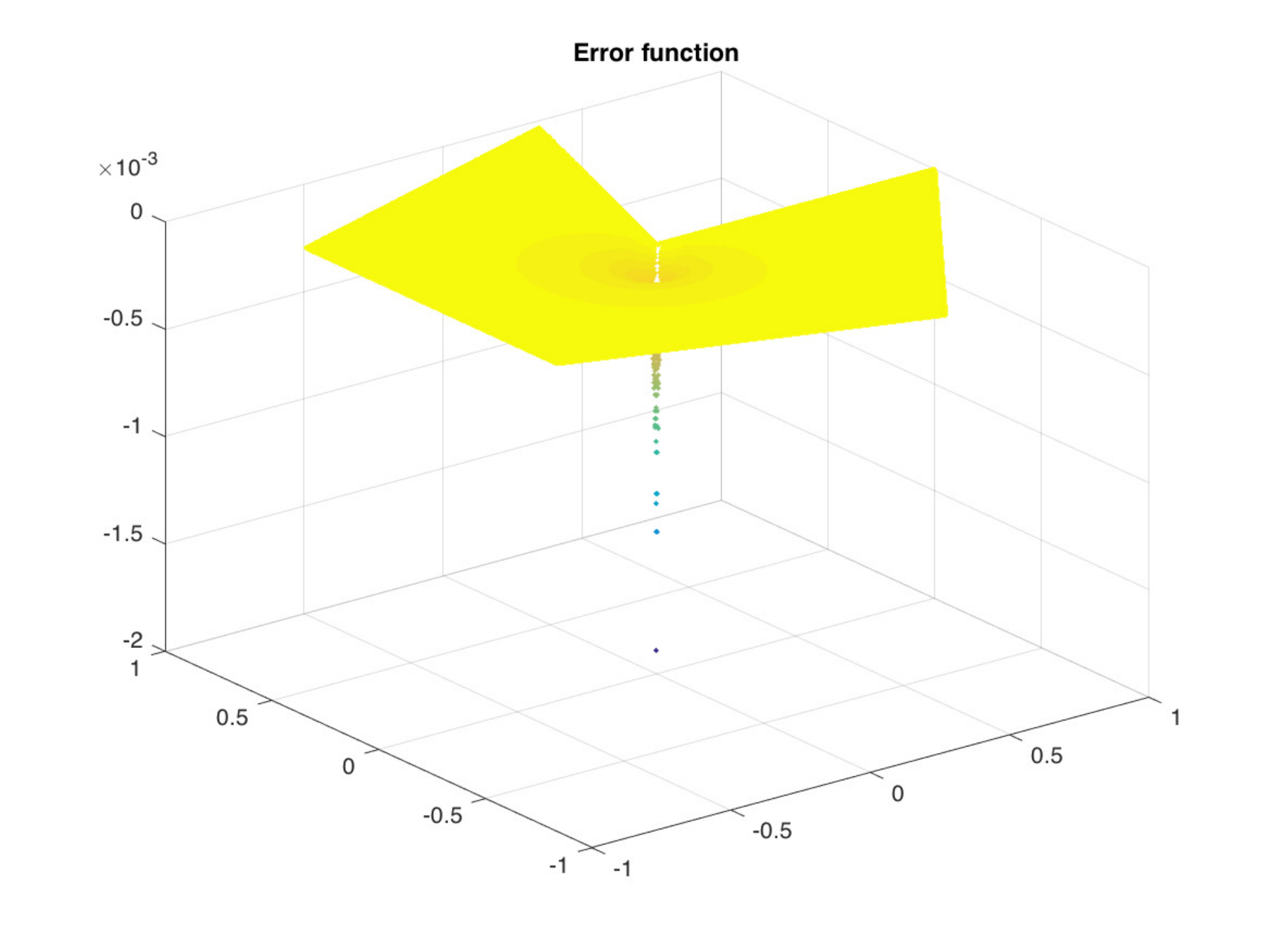}
		\caption{$e_{C\cdot 2^{-8}}$}
	\end{subfigure}
	\caption{These plots were obtained from the experiment $r^{\mu}\sin(\mu\theta)$, $\mu=5/8$. On the left, a DEC solution to the Hodge-Laplace Dirichlet problem over $K_{c\cdot 2^{-8}}$ is displayed. A plot of the related error function is found on the right.}\label{results nonconvex polygons}
\end{figure}

\begin{table}
	\scalebox{0.70}{
\begin{tabular}{|c||c|c|c|c|c|c|}
\hline
$i$ &$e_i^\infty=\|e_{C\cdot 2^{-i}}\|_{\infty}$& $\log\left(e_i^\infty/e^\infty_{i-1}\right)$ & $e^{H^1_d}=\|\dif e_{C\cdot 2^{-i}}\|_{C\cdot 2^{-i}}$& $\log\left(e_i^{H^1_d}/e^{H^1_d}_{i-1}\right)$ & $e^{L_d^2}=\| e_{C\cdot 2^{-i}}\|_{C\cdot 2^{-i}}$ & $\log\left(e_i^{L^2_d}/e^{L^2_d}_{i-1}\right)$ \\
\hline
0 & 0 & - & 0 & - & 0 & - \\
\hline
1 & 3.402738e-02 & - & 8.467970e-02 & - & 2.346479e-02 & - \\
\hline
2 & 3.194032e-02 & 9.131748e-02 & 6.533106e-02 & 3.742472e-01 & 1.353817e-02 & 7.934654e-01 \\
\hline
3 & 2.346298e-02 & 4.449927e-01 & 4.496497e-02 & 5.389676e-01 & 6.570546e-03 & 1.042947e+00 \\
\hline
4 & 1.595752e-02 & 5.561491e-01 & 2.983035e-02 & 5.920204e-01 & 2.970932e-03 & 1.145097e+00 \\
\hline
5 & 1.054876e-02 & 5.971636e-01 & 1.952228e-02 & 6.116590e-01 & 1.299255e-03 & 1.193231e+00 \\
\hline
6 & 6.894829e-03 & 6.134867e-01 & 1.270715e-02 & 6.194814e-01 & 5.584503e-04 & 1.218184e+00 \\
\hline
7 & 4.485666e-03 & 6.201927e-01 & 8.252738e-03 & 6.226958e-01 & 2.377754e-04 & 1.231830e+00 \\
\hline
8 & 2.912660e-03 & 6.229847e-01 & 5.354822e-03 & 6.240341e-01 & 1.007013e-04 & 1.239517e+00 \\
\hline
\end{tabular}
}
\caption{Experiment of Subsection \ref{concave Polygons} with $r^{\mu}\sin(\mu\theta)$, $\mu=5/8$. The terms $C\cdot2^{-i}\in\mathbb{R}$ indicates the values of $h$ in $\|\cdot\|_h$.}\label{table: Integral Errors Pentagon Corner 5/8}
\end{table}

\subsection{Three dimensional case}\label{sub: 3-Dimensional Triangulations}

As explained in \parencite{VanderZee2008}, it is not an easy task  in general to generate and refine tetrahedral meshes of $3$-dimensional arbitrary convex domains without altering its circumcentric quality. The next example is a degenerate case: the circumcenters of the tetrahedra used to triangulate the cube lies on lower dimensional simplices, but it does allow for a better control over the step size in $h$ (and thus for a better evaluation of the convergence rate), because the mesh can be refined recursively by gluing smaller copies of itself filling the cuboidal domain. The triangulation is displayed in Figure \ref{cubic mesh}. Again, a second order convergence in all norms similar to the results of Subsection \ref{Regular n-gons} is obtained and presented in Table \ref{table: errors cubic}.

\begin{table}
	\scalebox{0.70}{
		\begin{tabular}{|c||c|c|c|c|c|c|}
			\hline
			$i$ &$e_i^\infty=\|e_{C\cdot 2^{-i}}\|_{\infty}$& $\log\left(e_i^\infty/e^\infty_{i-1}\right)$ & $e^{H^1_d}=\|\dif e_{C\cdot 2^{-i}}\|_{C\cdot 2^{-i}}$& $\log\left(e_i^{H^1_d}/e^{H^1_d}_{i-1}\right)$ & $e^{L_d^2}=\|e_{C\cdot 2^{-i}}\|_{C\cdot 2^{-i}}$ & $\log\left(e_i^{L^2_d}/e^{L^2_d}_{i-1}\right)$ \\
\hline
0 & 8.586493e-04 & - & 1.487224e-03 & - & 3.035784e-04 & -\\
\hline
1 & 2.666725e-04 & 1.687000e+00 & 6.216886e-04 & 1.258358e+00 & 1.156983e-04 & 1.391702e+00 \\
\hline
2 & 7.122948e-05 & 1.904523e+00 & 1.774812e-04 & 1.808526e+00 & 3.166206e-05 & 1.869540e+00 \\
\hline
3 & 1.835021e-05 & 1.956678e+00 & 4.594339e-05 & 1.949737e+00 & 8.083333e-06 & 1.969733e+00 \\
\hline
4 & 4.621759e-06 & 1.989283e+00 & 1.158904e-05 & 1.987096e+00 & 2.031176e-06 & 1.992635e+00 \\
\hline
		\end{tabular}
	}
	\caption{Experiment of Subsection \ref{sub: 3-Dimensional Triangulations} with $u(x,y)=x^2\sin(y)+\cos(z)$.The terms $C\cdot2^{-i}\in\mathbb{R}$ indicates the values of $h$ in $\|\cdot\|_h$.}\label{table: errors cubic}
\end{table}

\begin{figure}
	\centering
	\begin{subfigure}[b]{0.45\textwidth}
		\centering
		\includegraphics[width=\textwidth]{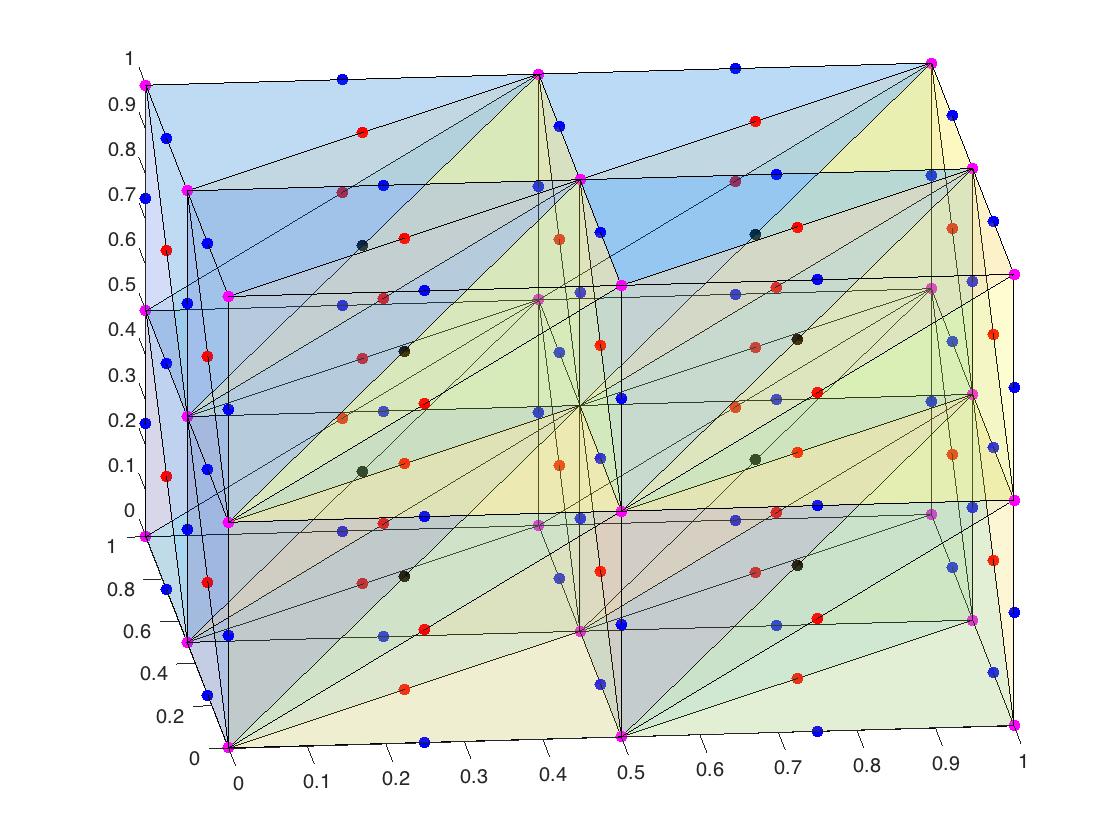}
		\caption{Initial complex $K_c$}
	\end{subfigure}
	\begin{subfigure}[b]{0.45\textwidth}
		\centering
		\includegraphics[width=\textwidth]{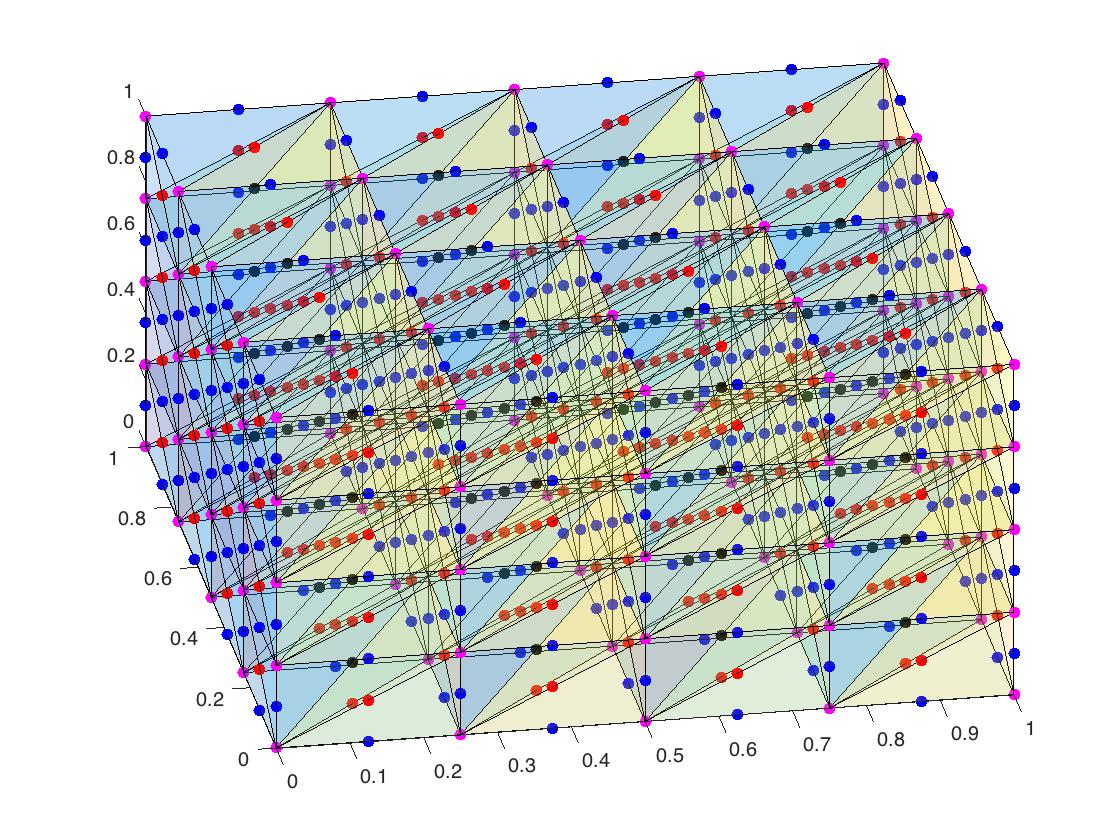}
		\caption{Refined complex $K_{c\cdot2^{-1}}$}
	\end{subfigure}
	\caption{The initial 3-dimensional primal triangulation and its first refinement are shown. A sequence of finer triangulations was created by gluing at each step $4$ smaller copies of the previous refinement into the initial cubic domain.}\label{cubic mesh}
\end{figure}

\section{Conclusion}

This paper fills a gap in the current literature, which has yet to offer any theoretical validation regarding the convergence of discrete exterior calculus approximations for PDE problems in general dimensions.
Namely, we prove that discrete exterior calculus approximations to the scalar Poisson problem converge at least linearly with respect to the mesh size
for quasi-uniform triangulations in arbitrary dimensions.
Nevertheless, it must be emphasized that this {\em first order} convergence result is only partially satisfactory.
%, as the theoretical understanding of the convergence behavior of the discrete solutions remains for the most part elusive.
In accordance with \parencite{Nong04}, the numerical experiments of Section \ref{s:numerics} display pointwise {\em second order} convergence of the discrete solutions over unstructured triangulations.
The same behavior was observed also for the discrete $L^2$ norm.
The challenges of
explaining the second order convergence, and of proving convergence for general $p$-forms persist.

Due emphasis must be given to the role played by compatibility in obtaining this result. The reliable framework of the continuous setting was reproduced at the discrete level to a sufficient extent so as to successfully provide a theorem of stability comparable to the one found in the classical PDEs literature. This makes another convincing case for the study and development of structure preserving discretization in general.

An accessory consequence of our investigations is the technical modification of the combinatorial definition for the boundary of a dual cell currently found in the early literature. We suggest that it is presently only compatible with the continuous theory up to a sign, and that accounting for the latter yields a new definition which agrees with the algorithms later described in \parencite{Desbrun2008}. A compatible extension of the usual discrete $L^2$-norm to $C^k(\dual K_h)$ was also explicitly introduced.

\section*{Acknowledgments}

This paper is the first author's first research article, and he would like to thank his family for its unwavering support on his journey to become a scientist.
The second author would like to thank
Gerard Awanou (UI Chicago) for his insights offered during the initial stage of this project, and
Alan Demlow (TAMU) for a discussion on interpreting some of the numerical experiments.
This work is supported by NSERC Discovery Grant, and NSERC Discovery Accelerator Supplement Program.
During the course of this work, the second author was also partially supported by the Mongolian Higher Education Reform Project L2766-MON.

\nocite{*}
\printbibliography

\end{document}